\documentclass[12pt,reqno, a4paper]{amsart}
\usepackage{graphicx}
\usepackage{fancyhdr}
\usepackage{enumerate}
\usepackage{amsmath}
\usepackage{amsthm}
\usepackage{mathabx}
\usepackage{amssymb}
\usepackage{relsize}
\usepackage{pdfsync}
\usepackage[colorlinks=true,linkcolor=blue,citecolor=magenta]{hyperref}

\usepackage[normalem]{ulem}

\usepackage{tikz}
\usetikzlibrary{decorations.pathmorphing,patterns}
\usetikzlibrary{calc,arrows}

\usepackage{MnSymbol}

\usepackage{xcolor}

\usepackage{changebar}
\usepackage{pdfsync}

\hfuzz=15pt

\textwidth 145mm
 \textheight 215mm
 \topmargin 0mm
 \oddsidemargin 30pt
 \evensidemargin 30pt
\footskip = 30pt

\makeindex
\pagestyle{plain}
\newtheorem{theorem}{Theorem}[section]
\newtheorem{lemma}[theorem]{Lemma}
\newtheorem{proposition}[theorem]{Proposition}
\newtheorem{corollary}[theorem]{Corollary}

\newtheorem{remark}[theorem]{Remark}

\newcommand{\al}{\alpha}
\newcommand{\Om}{\Omega}
\newcommand{\cal}{\mathcal}
\newcommand{\mc}[1]{{\mathcal #1}}

\newcommand{\bb}[1]{{\mathbb #1}}


\newcommand\bbE{{\mathbb E}}

\newcommand\cX{{\mathcal X}}



\newcommand\R{{\mathbb R}}

\newcommand\T{{\mathbb I}}


\newcommand \ga{\gamma}


\newcommand{\dd  }{\mathrm{d}}

\renewcommand{\tilde}{\widetilde}
\renewcommand{\bar}{\overline}
\renewcommand{\le}{\leqslant}
\renewcommand{\leq}{\leqslant}
\renewcommand{\ge}{\geqslant}

\numberwithin{equation}{section}


\newcommand{\mm}[1]{{#1}}
\newcommand{\so}[1]{{#1}}

\begin{document}

\title{An open microscopic model of heat conduction: evolution and non-equilibrium stationary states}

\author{Tomasz Komorowski}
\address{Tomasz Komorowski: Institute of Mathematics, Polish Academy
  Of Sciences\\Warsaw, Poland.} 
\email{{\tt komorow@hektor.umcs.lublin.pl}}

\author{Stefano Olla}
\address{Stefano Olla: CNRS, CEREMADE\\
  Universit\'e Paris-Dauphine, PSL Research University\\
  75016 Paris, France }
  \email{olla@ceremade.dauphine.fr}

\author{Marielle Simon}
\address{Marielle Simon: Inria, Univ. Lille, CNRS, UMR 8524, Laboratoire Paul Painlevé, F-59000 Lille, France}
\email{marielle.simon@inria.fr}


\begin{abstract}
  We consider a one-dimensional chain of coupled oscillators in contact at both ends
    with heat baths at different temperatures, and subject to an external force at one end.
    The Hamiltonian dynamics in the bulk is perturbed by random exchanges of the neighbouring
    momenta  such that  the energy is locally conserved.
 We prove that in the stationary state the energy and the volume stretch profiles,
      in large scale limit, converge to the solutions of a diffusive system with Dirichlet boundary conditions.
      As a consequence the macroscopic temperature stationary profile presents a maximum inside the
      chain higher than the thermostats temperatures,
      as well as the possibility of uphill diffusion (energy current against the temperature gradient). \mm{Finally, we are also able to derive the non-stationary macroscopic coupled diffusive equations followed by the energy and volume stretch profiles.}
\end{abstract}

\thanks{This work was partially supported by the grant 346300 for
  IMPAN from the Simons Foundation and the matching 2015-2019 Polish MNiSW fund,
  and by the ANR-15-CE40-0020-01 grant LSD.
  T.K.  acknowledges the support of the National Science Centre:
NCN grant 2016/23/B/ST1/00492. M. S. thanks Labex CEMPI (ANR-11-LABX-0007-01) and  the project EDNHS ANR-14-CE25-0011 of the French National Research Agency (ANR) for their support. This project has received funding from the European Research Council (ERC) under  the European Union's Horizon 2020 research and innovative programme (grant agreement  n\textsuperscript{o}715734)} 
\keywords{Hydrodynamic limit, heat diffusion, non-equilibrium stationary states,
 uphill heat diffusion.}
\subjclass[2010]{82C70, 60K35}

\maketitle


\section{Introduction}
\label{sec:intro}

Non-equilibrium transport in one dimension presents itself to be an interesting phenomenon
and in many models numerical simulations can be easily performed.
Most of the attention has been focused on the study
of the non-equilibrium stationary states (NESS),
where the systems are subject to exterior heat baths at
different temperatures and other external forces, so that
the invariant measure is not the equilibrium Gibbs measure.

The most interesting models are those with various conserved quantities
(energy, momentum, volume stretch...) whose transport is coupled. The
densities of these quantities
may evolve at different time scales,  particularly when the spatial dimension of the system
equals one. For example, in the  Fermi-Pasta-Ulam (FPU) chain, volume stretch, mechanical energy and momentum
all evolve in the hyperbolic time scale. Their evolution is governed by the
Euler equations (see \cite{oe}) while the thermal energy is expected to evolve at a superdiffusive time scale,
with an autonomous evolution described by a fractional heat equation.  This has been predicted \cite{Sp},   confirmed
by many numerical experiments on the NESS \cite{llp97, llp03} 
and proved analytically for harmonic chains with
random exchanges of momenta that conserve energy,
  momentum and volume stretch, see \cite{jko}.

\mm{In contrast to the situation described above, the
present paper deals with a system} for which conserved quantities evolve macroscopically
in the same \emph{diffusive} time scale,
and their macroscopic evolution is governed by a system of
\emph{coupled} diffusive equations. One example is given by the chain of coupled rotors, whose dynamics conserves the energy and the angular momentum.
In \cite{iaco} the NESS of  this chain is studied numerically, when Langevin thermostats
are applied at both ends, while a constant force is applied to one end and the position of the rotor
on the opposite side is kept fixed. While heat flows from the thermostats, work is performed by
the torque, increasing the mechanical energy, which is then transformed into thermal energy by the
dynamics of the rotors. The stationary temperature profiles observed numerically in \cite{iaco} present
a maximum inside the chain higher than  the temperature of both thermostats.
Furthermore, a negative linear response for the energy flux has been observed for certain values
of the external parameters. \mm{This phenomenon is
referred to in the literature as  an {\em uphill diffusion},
see \cite{krihsna} or \cite{cmp} and references therein}.
These numerical results have been confirmed in \cite{illp14}, as well
as an instability of the system when thermostats are at zero temperature.

The present work aims at describing a similar
phenomenon for  the NESS, but for a different model.
In particular, we are able to show rigorously that the maximum of the temperature profile occurs inside the system.
According to our knowledge it is the first theoretical result \mm{that rigorously establishes the heating inside the system and uphill diffusion phenomena. }

More specifically, we consider a chain of unpinned harmonic oscillators whose dynamics is perturbed by a random
mechanism that conserves the energy and volume stretch: any two nearest neighbour
particles exchange their momenta randomly in such a way that the
total kinetic energy is conserved.
Two Langevin thermostats are attached at the opposite ends of the chain and a
constant force $\bar\tau_+$ acts on the last particle of the chain.
This system has only two conserved quantities: total energy and volume.
Since the random mechanism does not conserve the total momentum,
the macroscopic behaviour of these two quantities is diffusive, and the
non stationary hydrodynamic limit with periodic boundary conditions
(no thermostats or exterior force present) has been proven in \cite{kos1}.

The action of this constant force puts the system out of equilibrium,
even when the temperatures of the thermostats are equal.
As in the rotor chain described above, the {exterior} force performs positive work on the system,
that increases the mechanical energy (concentrated on low  frequency modes).
The random mechanism, \mm{which consists in the kinetic energy exchange between neighbouring atoms,
see definition \eqref{eq:generator} and the following explanations, }
transforms the mechanical energy into the thermal one
(uniformly distributed in all frequencies, when the
system is in a local equilibrium),
which is eventually dissipated by the thermostats.
This transfer of mechanical into thermal energy happens in the bulk of the system
and is already completely predicted by the solution of the
macroscopic diffusive system of equations obtained in the hydrodynamic limit \cite{kos1},
\mm{see also \cite{bc} for a similar model without boundary conditions.}

In the present article we study the NESS of this dynamics. We prove, \mm{see 
Theorem \ref{theo:energy} below},
that the energy and the
volume stretch profiles converge to the stationary solution of the diffusive system,
with the boundary conditions  imposed by the thermostats and the external tension.
It turns out that these stationary equations can be solved explicitly and the stretch profile is linear
between $0$ and $\bar\tau_+$, while the thermal energy (temperature)
profile is a concave parabola with the boundary
conditions coinciding with the temperatures of the thermostats.
The curvature of the parabola is proportional to $\bar\tau_+^2$, i.e.~the increase of the bulk temperature
is not a linear response term. In the case $\bar\tau_+= 0$, the NESS was studied in \cite{bo1},
where the temperature profile is  proved to be linear: \mm{more details are available in \cite{bcpre,bllk}}.
 This \emph{heating inside the system} phenomenon is similar to the ohmic loss, due to the diffusion of electricity
in a resistive system (see e.g.~\cite{bh08}).

The NESS for our model also provides a simple example of an \emph{uphill} energy diffusion:
if the force $\bar\tau_+$ is large enough and applied on the side, where the coldest thermostat is acting,
the sign of the energy current can be equal to the one of the temperature gradient,  \mm{see 
Theorem \ref{theo:current} below}. It is not surprising after
understanding that this is regulated by a system of two diffusive coupled equations.
On the other hand, the model does not work as a \emph{stationary refrigerator}:
i.e.~a system where the heat on the coldest
thermostat  flows into it.

Our results suggest that there is a universal behaviour of the temperature profiles in the NESS
when there are at least two conserved quantities. 
This should be tested on a system with
three conserved quantities that evolves in the diffusive scale, such as e.g.~a  non-acoustic harmonic chain
with a random exchange of  momentum as considered in \cite{kona},
where the non-stationary hydrodynamic limit is proven.
\mm{An attempt to describe more generally the  systems for which the phenomena
of an uphill energy diffusion and  heating inside  the system occur is  made in \cite{O19}.}

\medskip

\mm{Let us add a comment on the proofs of our main results.
In proving Theorem  \ref{theo:energy} (on the asymptotics of energy and stretch profile)
we need to make an additional  hypothesis concerning
the strength $\ga>0$ of the noisy part of the dynamics,
see \eqref{eq:qdynamics} and \eqref{eq:pdyn}. More precisely we suppose that $\ga=1$.
This assumption is of purely technical nature and allows to discard the term corresponding
to the equipartition of the mechanical and kinetic energy in the decomposition \eqref{energy:decomp}
of the microscopic energy profile of  the chain (the term $\mathcal{H}_n^{\text{m}}$).
We conjecture that this term vanishes in the stationary macroscopic limit,
making the conclusion of the theorem valid for any $\ga>0$,
but are unable to prove this fact at the moment.
We do not need this hypothesis in our proof of Theorem  \ref{theo:current} (on the uphill diffusion phenomenon).}

\so{In Appendix \ref{sec:form-deriv-equat} we give a proof for the non-stationary macroscopic evolution
  of the energy and the volume stretch profiles in the diffusive space-time scaling. As for the NESS, the proof is rigorous only for $\gamma = 1$, for similar reasons. The corresponding result with periodic boundary conditions was contained in \cite{bc}.} 

\medskip

The rest of the paper is organized as follows: in Section \ref{sec:adiab-micr-dynam} we define the microscopic model under investigation and give the expected macroscopic system of equations, showing the phenomenon of uphill diffusion. In Section \ref{sec:result} we state the main  results of the paper, namely the convergence of the non-equilibrium stationary profiles of elongation, current and energy. In order to prove them, we need precise computations on the averages and second order moments taken with respect to the NESS. Section \ref{sec:stat} provides elements of the proofs and preliminary computations on the averages, while Section \ref{sec:moment} provides all the remaining technical lemmas, concerning the second order moments.



\section{Microscopic dynamics and macroscopic behaviour}
\label{sec:adiab-micr-dynam}

\subsection{Open chain of oscillators}
\label{sec2.1}

Let $\T_n := \{1,\dots,n\}$, $\bar\T_n:=\T_n \cup \{0\}$ and $\T := [0,1]$. 
The configuration space $ \Omega_n:=\R^{\T_n}\times\R^{\bar\T_n}$  consists of all sequences
$(\mathbf{q},\mathbf{p}):=\{q_x,p_x\}_{x \in \bar\T_n}$, where
$p_x\in\R$ stands for the momentum of the oscillator at site $x$, and
$q_x\in\R$ represents its position.  The interaction
between two particles $x$ and $x+1$ is described by the quadratic
potential energy $V(q_x-q_{x+1}):=\frac12 (q_x-q_{x+1})^2$ of a
harmonic spring {linking} the particles.  At the boundaries the system
is connected to two Langevin heat baths at temperatures $T_-$ and
$T_+$. Furthermore, on the right boundary acts a force (tension)
$\bar\tau_+$, possibly slowly changing in time at a scale
$t/n^2$. Note that the system is {\em unpinned}, i.e.~there is no external
potential binding the particles. Consequently, the absolute positions $q_x$ 
do not have a precise meaning, and the dynamics  only depends on the interparticle elongations 
$r_x: = q_x - q_{x-1}, x\in\T_n$. The configurations can then be described by
 \begin{equation}
  \label{Omn}
 (\mathbf r, \mathbf p) = (r_1, \dots, r_n, p_0, \dots, p_n) \in \Omega_n. 
\end{equation}
The total energy of the system is defined by the Hamiltonian:
\begin{equation}
\label{Hn}
\mathcal{H}_n (\mathbf r, \mathbf p) :=\sum_{x\in\T_n} {\cal E}_x+ \frac{p_0^2}2,
\end{equation}
 with
\begin{equation*}
{\cal E}_x:= \frac{p_x^2}2 + \frac{r_{x}^2}2,\quad x\in\T_n.
\end{equation*}
 \mm{We investigate this system in the diffusive time scale (when the ratio of the  microscopic \textit{vs} macroscopic time is $n^2$), therefore} the equations of the  microscopic dynamics are given in the bulk by 
\begin{align}
\label{eq:qdynamics}
    \dd   r_x(t) &= n^2 \left(p_x(t)- p_{x-1}(t)\right)  \dd   t, \qquad  x\in\T_n \vphantom{\Big(}\\
    \dd   p_x(t) &= n^2 \left(r_{x+1}(t)- r_x(t)\right)  \dd   t -  \gamma n^2  p_x(t)
     \dd  t \notag \vphantom{\Big(} \\
     & \quad + n\sqrt{\gamma}\big(p_{x-1}(t)\dd
       w_{x-1,x}(t)-p_{x+1}(t)\dd w_{x,x+1}(t)\big), \quad 
       x\in \{1, \dots, n-1\} \vphantom{\Big(}\label{eq:pdyn}\end{align}
 and at the boundaries: 
    \begin{align}
     \dd   p_0(t) &= n^2 \; r_1(t)   \dd   t
     -  \frac{n^2}2(\gamma+\tilde \gamma)p_0(t) \dd t   - n \sqrt \gamma p_1(t) \dd w_{0,1}(t) + n\sqrt{\tilde\gamma T_-} \dd \tilde w_0(t) \vphantom{\Big(} \label{eq:rbd}\\
 \dd   p_n(t) &= -n^2 \; r_n(t)  \dd   t  + n^2\; \bar\tau_+(t)  \dd t - \frac{n^2}2(\gamma +  \tilde \gamma)p_n(t) \dd t  \vphantom{\Big(} \notag \\
                  & \qquad \qquad \qquad \qquad \qquad \quad
                    + n \sqrt \gamma p_{n-1}(t)\dd w_{n-1,n}(t)
                    + n\sqrt{\tilde\gamma T_+} \dd \tilde w_n(t)\vphantom{\Big(} \label{eq:pbd}
  \end{align}
where $w_{x,x+1}(t)$, $x\in\{0,\dots,n-1\}$, $\tilde w_0(t)$ and $\tilde
w_n(t)$ are independent, standard one dimensional Wiener processes, and $\gamma>0$ (resp. $\tilde\gamma >0$) regulates the intensity of the random perturbation (resp. the Langevin thermostats).  See Figure \ref{fig:osci} for a representation of the chain.
\mm{Note that the purely Hamiltonian dynamics is perturbed
  by a stochastic noise which exchanges kinetic energy between the neighbouring atoms,
  and with the boundary thermostats}.

\begin{figure}

\begin{center}
\begin{tikzpicture}
\node[circle,fill=black,inner sep=1.2mm] (e) at (0,0) {};
\node[circle,fill=black,inner sep=1.2mm] (f) at (2,0) {};
\node[circle,fill=black,inner sep=1.2mm] (g) at (3.5,0) {};
\node[circle,fill=black,inner sep=1.2mm] (h) at (4.8,0) {};
\node[circle,fill=black,inner sep=1.2mm] (i) at (6.8,0) {};
\node[circle,fill=black,inner sep=1.2mm] (j) at (8.5,0) {};
\node[circle,fill=black,inner sep=1.2mm] (k) at (10,0) {};
\node[circle,fill=black,inner sep=1.2mm] (l) at (11.3,0) {};

\draw[dashed] (2,0) -- (3.5,0);
\draw[dashed] (8.5,0) -- (10,0);
\draw[ultra thick, blue, ->] (11.3,0) -- (12.5,0);

\draw[thick, <->] (3.5,-1.2) -- (4.8,-1.2);

\draw (0, -0.6) node[] {$q_{0}$};
\draw (2, -0.6) node[] {$q_{1}$};
\draw (11.3, -0.6) node[] {$q_{n}$};
\draw (3.5, -0.6) node[] {$q_{x-1}$};
\draw (4.8, -0.6) node[] {$q_{x}$};
\draw (6.8, -0.6) node[] {$q_{x+1}$};
\draw[dashed] (3.5,-1.5) -- (3.5,-1);
\draw (4.1, -1.5) node[] {$r_x$};
\draw[dashed] (4.8,-1.5) -- (4.8,-1);

\draw (-0.6,1.8) node[] {\large\color{red}$T_-$};
\draw (12,1.8) node[] {\large\color{red}$T_+$};
\draw (12.4,-0.4) node[] {\color{blue}$\bar\tau_+(t)$};

\draw[decoration={aspect=0.3, segment length=3mm, amplitude=3mm,coil},decorate] (0,0) -- (2,0); 
\draw[decoration={aspect=0.3, segment length=1.8mm, amplitude=3mm,coil},decorate] (3.5,0) -- (4.9,0); 
\draw[decoration={aspect=0.3, segment length=3mm, amplitude=3mm,coil},decorate] (4.8,0) -- (6.9,0); 
\draw[decoration={aspect=0.3, segment length=2.5mm, amplitude=3mm,coil},decorate] (6.8,0) -- (8.6,0); 
\draw[decoration={aspect=0.3, segment length=1.8mm, amplitude=3mm,coil},decorate] (10,0) -- (11.4,0); 

\fill [pattern = north east lines, pattern color=red] (-0.3,0.8) rectangle (0.3,2);
\fill [pattern = north east lines, pattern color=red] (11,0.8) rectangle (11.6,2);
\node (c) at (-0.3,1.5) {};
\node (d) at (-0.1,0.1) {};
\node (a) at (11.6,1.5) {};
\node (b) at (11.4,0.1) {};

\draw (c) edge[dashed, ultra thick, red, ->, >=latex, bend right=60] (d);
\draw (a) edge[dashed, ultra thick, red, ->, >=latex, bend left=60] (b);

\end{tikzpicture}
\end{center}

\caption{Oscillator chains with heat baths and one boundary force.}
\label{fig:osci}
\end{figure}
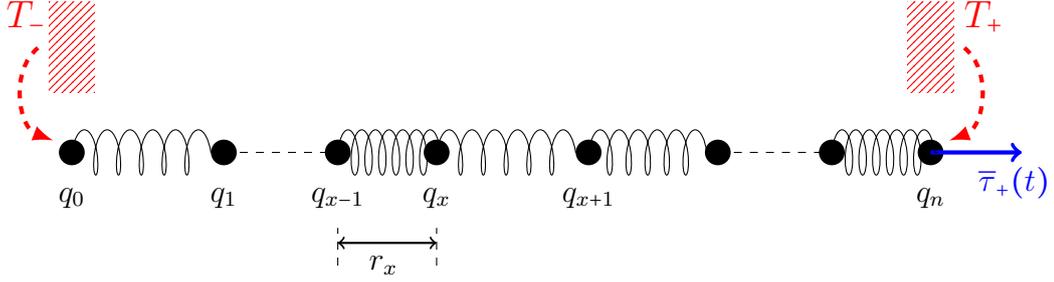

\mm{We let 
\begin{equation}
(\mathbf{ r}_n(t), \mathbf{ p}_n(t)) := (r_1(t), \dots,
r_n(t), p_0(t), \dots, p_n(t)),\quad t\ge0
\label{eq:defrp} \end{equation} be the
$\Omega_n$--valued process whose dynamics is determined by the
equations \eqref{eq:qdynamics}--\eqref{eq:pbd}.
Its generator} is given by
\begin{equation}
 \label{eq:generator}
  L:= n^2 \Big(A + \frac{\gamma}{2} S + \frac{\tilde\gamma}{2} \tilde S \Big),
\end{equation}
where
\begin{align*}
&  A := \sum_{x=1}^n (p_x - p_{x-1}) \partial_{r_x} + \sum_{x=1}^{n-1} (r_{x+1} - r_x) \partial_{p_x}
  + r_1 \partial_{p_0} + \left(\bar\tau_+(t) - r_n\right) \partial_{p_n}\\
&  S f  := \sum_{x=0}^{n-1}  \cX_x \circ \cX_x (f),
\end{align*}
where $\cX_x$ is the {momentum} exchange operator defined as 
\[\cX_x:= p_{x+1}\partial_{p_x} - p_x \partial_{p_{x+1}},\]
and moreover the generator of the Langevin heat baths at the boundaries is given by
\begin{equation*}
  \tilde S :=  T_-\partial_{p_0}^2 - p_0 \partial_{p_0} + T_+\partial_{p_n}^2 - p_n \partial_{p_n}. 
\end{equation*}
\mm{From the microscopic energy conservation law
there exist microscopic energy currents $j_{x,x+1}$ which satisfy
\begin{equation}
  \label{eq:en-evol}
  n^{-2} L \mathcal E_x  = j_{x-1,x} - j_{x,x+1} , \qquad \text{for any } x \in \bar \T_n
\end{equation}
and are given by 
\begin{equation}
\label{eq:current}
  j_{x,x+1} := - p_x r_{x+1} + \frac{\gamma}{2} (p_x^2 - p_{x+1}^2),\qquad \text{if } x \in \{0,...,n-1\}, \end{equation} while at the boundaries 
  \begin{equation} \label{eq:current-bound}
  j_{-1,0} := \frac{\tilde\gamma}{2}  \left(T_- - p_0^2 \right), \qquad
  j_{n,n+1} := -\frac{\tilde\gamma}{2}  \left(T_+ - p_n^2 \right) - \bar\tau_+(t) p_n.
\end{equation}}

\subsection{Macroscopic equations}
\label{sec:macr-equat}
Suppose that $r(t,u)$,  $e(t,u)$, $(t,u)\in  \R_+\times \T
$, are  the macroscopic profiles of \textit{elongation} and \textit{energy}
of the macroscopic system, obtained in the diffusive scaling
limit. The profiles $r(t,\cdot),  \mm{e(t,\cdot)}$ are the expected
limits, as   $n$ gets large, of 
\[ \frac{1}{n}\sum_{x \in \T_n} r_x(t) \delta_{x/n}(\cdot) , \quad \text{ and } \quad \frac{1}{n}\sum_{x \in \bar\T_n} \mm{\mathcal{E}_x(t)} \delta_{x/n}(\cdot), \]
where $\delta_u(\cdot)$ is the delta Dirac function at point $u$. \mm{These convergences are expected to hold in the weak
formulation sense: more details will be given in Appendix \ref{sec:form-deriv-equat}. }
If both convergences do hold at time $t=0$ to some given profiles $r_0(u)$ and $\mathcal{E}_0(u)$, then we expect
that they satisfy the following system of equations\footnote{See also
  Theorems 3.7 and 3.8 of \cite{kos1} for a similar model which gives a similar coupled diffusive system
  for every value of $\gamma$.},
    \begin{align}
        \partial_t
        r(t,u)&={\gamma}^{-1}\;\partial^2_{uu}r(t,u)\vphantom{\Bigg(}\label{eq:linear}
      \\
    \mm{ \partial_t e (t,u)}& \mm{=  \frac 12  \partial^2_{uu}
     \left\{  \big(\gamma^{-1}+\gamma\big) e(t,u) +
       \frac 12 \big(\gamma^{-1}- \gamma\big)\; r^2(t,u)\right\},} \quad
        (t,u)\in \R_+\times \T ,
\label{eq:4}
      \end{align}
with the boundary conditions
\begin{equation*}
  \begin{split}
  & r(t,0) = 0
   , \qquad \qquad r(t,1) = \bar\tau_+(t), \\
   &e (t,0) = T_-, \qquad \quad \ e(t,1) = T_+ + \frac{(\bar\tau_+(t))^2}{2}
  \end{split}
\end{equation*}
and with the initial condition
\begin{equation*}
 r(0,u)= r_0(u),\qquad      
 e(0,u) =\mc E_0(u).
\end{equation*}
In Appendix \ref{sec:form-deriv-equat} we will give the proof arguments for a derivation of these macroscopic equations, which are
    rigorous for $\gamma = 1$, and conditioned to a form of local equilibrium result for $\gamma \neq 1$, stated in \eqref{conj1}.

\medskip

Define now $e_{\rm mech}(t,u):= \frac12 r^2(t,u)$ and $e_{\rm th}(t,u)
:=e(t,u)-e_{\rm mech}(t,u)$ as respectively the {\em mechanical} and {\em thermal}
components of the macroscopic energy. From \eqref{eq:linear} and
\eqref{eq:4} we conclude that 
\[ \partial_t e_{\rm mech}(t,u)= {\ga}^{-1}\left(\partial_{uu}^2
    e_{\rm mech}(t,u)-\big(\partial_ur(t,u)\big)^2\right),\quad
  (t,u)\in \R_+\times \T\]
  with 
 \[
   e_{\rm mech} (t,0) = 0, \quad  e_{\rm mech}(t,1) =
   \frac{(\bar\tau_+(t))^2}{2} ,\quad e_{\rm mech} (0,0) =\frac{ r_0^2(u)}{2},\quad (t,u)\in \R_+\times \T
  \]
and 
\begin{equation}
        \partial_t e_{\rm{th}} (t,u)=
        {\color{red} \frac 12} \big({\gamma}^{-1}+\gamma\big)\partial^2_{uu} e_{\rm{th}}(t,u) +
        {\color{red}{\gamma}^{-1}}\; \big(\partial_u r(t,u)\big)^2, \quad
        (t,u)\in \R_+\times \T ,
\label{eq:linear2}\end{equation} with 
\[ e_{\rm{th}} (t,0) = T_-, \quad  e_{\rm{th}} (t,1) = T_+, \quad t >0.\]

\subsection{Stationary non-equilibrium states}
\label{sec:stat-non-equil}

From now on we assume \mm{$\bar\tau_+(t)\equiv \bar\tau_+$} to be constant in time.

\mm{When $\bar\tau_+=0$ and $T_- = T_+ = T$, the system is \emph{in equilibrium}
  and  the stationary probability distribution is given explicitly by the homogeneous Gibbs measure
  $$
  \nu_{T} (\dd{\bf r},\dd{\bf p}):= g_{T}({\bf r}, {\bf p}) \dd p_0\prod_{x\in\T_n}\dd p_x \dd r_x,
  $$ where
  \begin{equation}
    \label{eq:gibbs}
  g_T({\bf r}, {\bf p}) :=\frac{e^{-p_0^2/2T}}{\sqrt{2\pi T}}
  \prod_{x\in\T_n} \frac{e^{-\mc E_x/T}}{2\pi T}.  
\end{equation}
If $ \bar\tau_+ \neq 0$, or $T_- \neq T_+$, the stationary measure exists and is unique, but it is not given explicitly. More precisely, we know that there exists a unique stationary
probability distribution $\mu_{\rm ss}$  on $\Omega_n$ (cf. \eqref{Omn}) for the microscopic dynamics described by  the equations \eqref{eq:qdynamics}--\eqref{eq:pbd}.
As a consequence
$
\langle LF\rangle_{ {\rm ss}} = 0
$ 
for any function $F$ in the domain of the operator $L$, given by
\eqref{eq:generator}. Hereafter, we denote
$$
\langle F\rangle_{ {\rm ss}} := \int_{\Omega_n} F \; \dd\mu_{\rm ss} .
$$
The proof of the existence and uniqueness of a stationary state follows from the same argument as the one
used in \cite[Appendix A]{bo1} for $\bar\tau_+=0$. The fact that  in our case $\bar \tau_+$ does not vanish requires only minor modifications. In addition, one can show, see bound (A.1) in     \cite{bo1}, that for a fixed $n$ we have $\langle {\cal H}_n\rangle_{ {\rm ss}}<+\infty$ (cf.  \eqref{Hn}).}

The corresponding
\textit{stationary} profiles, denoted respectively by $r_{\rm ss}(u)$ and $e_{\rm th,ss}(u)$,  will solve the stationary version of equations \eqref{eq:linear} and \eqref{eq:linear2}, i.e.:
\begin{equation}
  \label{eq:sspp1}
r_{\rm{ss}} (u) = \bar\tau_+ u
\end{equation} and 
\begin{equation*}  \big(\gamma^{-1}+\gamma\big)\partial^2_{uu} e_{\rm{th, ss}}(u) + 2\gamma^{-1}\; \bar\tau_+^2 = 0, 
\end{equation*}
with the boundary conditions 
\[  
e_{\rm{th, ss}}(0)= T_-, \qquad  e_{\rm{th, ss}}(1) = T_+.
\] 
In other words
\begin{equation}
  \label{eq:3}
   e_{\rm{th, ss}}(u) = \frac{\bar\tau_+^2}{1+\gamma^2}\; u (1-u) + (T_+-T_-)u + T_-.
\end{equation}
Taking the average with respect to the stationary state in
\eqref{eq:en-evol}, we get the {\em stationary microscopic energy current}
\begin{equation}\label{eq:jxs}
 \langle j_{x,x+1}\rangle_{ {\rm ss}} =: \bar j_s, \qquad \text{for any }
 x\in\{-1,\dots,n\}.
\end{equation}
The \textit{macroscopic stationary energy current} is defined as the
limit of  $n\bar
j_s$, as $n\to+\infty$. It equals, see Theorem \ref{theo:current} below,
\begin{equation*}
 J_{\rm ss}=  - \frac12(\gamma^{-1}+\gamma)(T_+ - T_-) -\frac{\bar\tau_+^2}{2\gamma}.
\end{equation*}
Observe that  the energy
current can flow against the temperature gradient if $T_- > T_+$ and $|\bar\tau_+|$ is large enough
(\emph{uphill diffusion}). 
Assuming $T_+ \ge T_-$
  the maximum stationary temperature $e_{\rm{th, ss}}^{\rm max}$ is reached at
$$
u_{\rm max}= \bigg(\frac12+\frac{1+\gamma^2}{\bar\tau_+^2}(T_+-T_-)\bigg) \wedge 1 \,
$$
which implies that, if the condition $2(1+\gamma^2) (T_+ -T_-) \le \bar\tau_+^2$ is satisfied, then
 the maximum temperature of the chain is attained inside, since 
$u_{\rm max} < 1$ (see Figure \ref{fig:temp_profile}), and
 it equals 
\[ e_{\rm{th, ss}}^{\rm max} =  \frac{(T_+ - T_-)}{2} + T_- +\frac {\bar\tau_+^2}{4(1+\gamma^2)}
  \ \ge T_+.
\]
Note that this does not depend on the sign of $\bar\tau_+$. 

\begin{figure}
\centering
\includegraphics[scale=0.35]{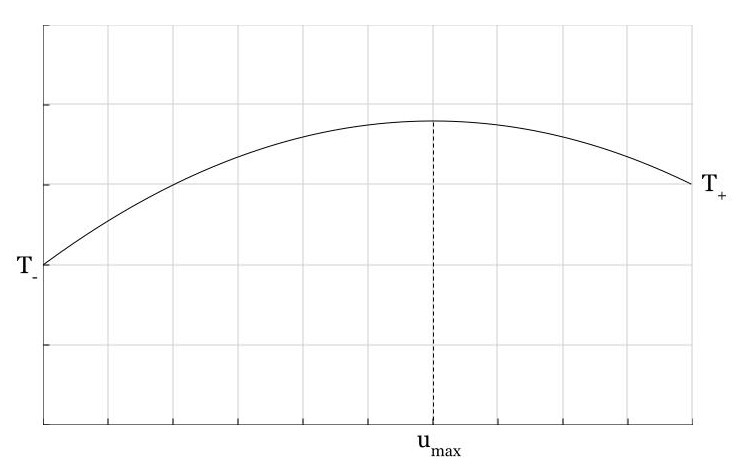}
\caption{Temperature profile when $T_+-T_- < 2 \bar\tau_+^2$.}
\label{fig:temp_profile}
\end{figure}

This phenomenon was observed by dynamical numerical simulations in \cite{iaco} for the
stationary states of the rotor model. It has attracted quite some
interest
from physicists, see \cite{krihsna} for a review.
The present article is devoted to  the proof of
such a phenomenon, when $\gamma=1$.  \mm{This restriction is technical and will be further explained in Section \ref{sec4.3}.}  According to our knowledge it is
the first rigorous proof of this fact in the existing literature.

\section{Main results}
\label{sec:result}

Let us start with the following: 

\begin{theorem}[Stationary elongation profile]\label{theo:elong}
 \mm{The following uniform convergence holds:
\[
 \sup_{u \in \T} \big| \langle r_{[nu]}\rangle_{\rm ss} - r_{\rm ss}(u) \big| \xrightarrow[n\to\infty]{} 0, \]
}
where $r_{\rm ss}(u):=\bar\tau_+ u$. In particular, for any continuous test function $G:\T\to\bb R$, 
\[\frac{1}{n}\sum_{x\in\T_n} G\Big(\frac x n\Big) \langle r_x\rangle_{ {\rm ss}} \xrightarrow[n\to\infty]{} \int_\T G(u) r_{\rm ss}(u) \;\dd u.\]
\end{theorem}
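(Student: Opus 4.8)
The plan is to avoid any hydrodynamic or entropy estimate: for this statement the stationarity relation $\langle LF\rangle_{\rm ss}=0$ applied to the linear observables $F=r_x$ and $F=p_x$ already closes into an explicit finite linear recursion that can be solved by hand.

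First I would use $\langle L r_x\rangle_{\rm ss}=0$. Since $L r_x=n^2(p_x-p_{x-1})$ for $x\in\T_n$, this forces $\langle p_x\rangle_{\rm ss}$ to be independent of $x\in\bar\T_n$; denote the common value by $\bar p$. Using that $S p_x=-2p_x$ for $x\in\{1,\dots,n-1\}$ (and $S p_0=-p_0$, $S p_n=-p_n$, $\tilde S p_0=-p_0$, $\tilde S p_n=-p_n$, $A r_x=p_x-p_{x-1}$), one computes
\[
L p_x=n^2(r_{x+1}-r_x)-\gamma n^2 p_x,\qquad x\in\{1,\dots,n-1\},
\]
so that $\langle L p_x\rangle_{\rm ss}=0$ gives $\langle r_{x+1}-r_x\rangle_{\rm ss}=\gamma\bar p$, again independent of $x$; hence $x\mapsto\langle r_x\rangle_{\rm ss}$ is affine on $\T_n$. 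At the two ends, $L p_0=n^2 r_1-\tfrac{n^2}{2}(\gamma+\tilde\gamma)p_0$ and $L p_n=n^2(\bar\tau_+-r_n)-\tfrac{n^2}{2}(\gamma+\tilde\gamma)p_n$, so stationarity yields the boundary identities
\[
\langle r_1\rangle_{\rm ss}=\tfrac12(\gamma+\tilde\gamma)\,\bar p,\qquad
\langle r_n\rangle_{\rm ss}=\bar\tau_+-\tfrac12(\gamma+\tilde\gamma)\,\bar p.
\]
Combining them with $\langle r_n\rangle_{\rm ss}=\langle r_1\rangle_{\rm ss}+(n-1)\gamma\bar p$ gives $(n\gamma+\tilde\gamma)\,\bar p=\bar\tau_+$, whence $\bar p=\bar\tau_+/(n\gamma+\tilde\gamma)$ and the explicit formula
\[
\langle r_x\rangle_{\rm ss}=\frac{\bar\tau_+}{n\gamma+\tilde\gamma}\Big(\gamma(x-1)+\tfrac12(\gamma+\tilde\gamma)\Big),\qquad x\in\T_n.
\]
Evaluating at $x=[nu]$ and using $|[nu]/n-u|\le 1/n$, this reads $\langle r_{[nu]}\rangle_{\rm ss}=\bar\tau_+ u+O(1/n)$ uniformly in $u\in\T$, which is exactly the claimed uniform convergence; the ``in particular'' statement then follows from this uniform bound together with convergence of Riemann sums for continuous $G$.

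The only step needing care is the justification of $\langle LF\rangle_{\rm ss}=0$ for the \emph{unbounded} observables $F=r_x,p_x$: one must know that $F$ and $LF$ lie in $L^1(\mu_{\rm ss})$ and that no contribution at infinity is lost when passing from bounded functions in the domain of $L$. Since $LF$ is affine in $(\mathbf{r},\mathbf{p})$, its integrability, and that of $F$, follow by Cauchy--Schwarz from the a priori bound $\langle\mathcal{H}_n\rangle_{\rm ss}<+\infty$ recalled above; one then concludes by a routine truncation of $F$. All remaining steps are elementary algebra, so I do not anticipate a genuine obstacle for this particular theorem --- the real work in the paper lies in the analogous statements for the energy (Theorem \ref{theo:energy}) and for the current (Theorem \ref{theo:current}), where second moments under $\mu_{\rm ss}$ must be controlled.
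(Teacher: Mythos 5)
Your proposal is correct and follows essentially the same route as the paper: it rederives the explicit first-moment identities of Proposition \ref{prop:averages} from $\langle Lr_x\rangle_{\rm ss}=0$ and $\langle Lp_x\rangle_{\rm ss}=0$ (bulk plus boundary relations), solves the resulting affine recursion to get $\bar p_s=\bar\tau_+/(\gamma n+\tilde\gamma)$ and $\langle r_x\rangle_{\rm ss}=\bar p_s\big(\gamma x+\tfrac{\tilde\gamma-\gamma}{2}\big)$, and then reads off the uniform convergence and the Riemann-sum statement. Your extra remark on justifying $\langle LF\rangle_{\rm ss}=0$ for the unbounded observables via $\langle\mathcal{H}_n\rangle_{\rm ss}<+\infty$ and truncation is a reasonable (slightly more careful) version of what the paper implicitly assumes.
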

\begin{proof}
The averages under the stationary state $\langle r_x\rangle_{ {\rm ss}}$ and
$\langle p_x\rangle_{ {\rm ss}}$ are computable explicitly, see Proposition
\ref{prop:averages}  in the next section. It turns out that
 $\langle p_x\rangle_{ {\rm ss}}$ is constant for all  $x\in\bar \T_n$ and
 equals  $\bar p_s:= \bar \tau_+/(\gamma n +\tilde\gamma)$ (see \eqref{eq:statvel}).
 From \eqref{eq:statelo} we also have
\begin{equation*}
 n\left(\langle r_{x+1}\rangle_{ {\rm ss}} - \langle r_{x}\rangle_{ {\rm ss}} \right) = n \gamma \bar p_s \
    \xrightarrow[n\to\infty]{} \ \bar \tau_+, \qquad \text{for }  x\in \{1,...,n-1\}
    \end{equation*}
    and \begin{equation*}
\langle r_1\rangle_{ {\rm ss}} \mathop{\longrightarrow}_{n\to\infty} \ 0, \qquad 
    \langle r_n\rangle_{ {\rm ss}} \xrightarrow[n\to\infty]{} \ \bar \tau_+.
\end{equation*}
 \mm{Finally,  \eqref{eq:statelo} directly implies the conclusion of
  the theorem.}
\end{proof}

Concerning the \emph{stationary energy flow} and 
the validity of the Fourier law we show the following result on the
macroscopic stationary energy current.
\begin{theorem}[Stationary energy current and Fourier law]\label{theo:current}
 \begin{equation}  n \bar j_s  \ \xrightarrow[n\to\infty]{}\  - \frac12(\gamma^{-1}+\gamma)(T_+ - T_-) -\frac{\bar\tau_+^2}{2\gamma}.\label{eq:limitcurrent}
\end{equation}
\end{theorem}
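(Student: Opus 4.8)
The plan is the following. By \eqref{eq:jxs} the stationary current $\bar j_s=\langle j_{x,x+1}\rangle_{\rm ss}$ is independent of $x\in\{-1,\dots,n\}$. Summing the bulk expression \eqref{eq:current} over $x\in\{0,\dots,n-1\}$ and telescoping the quadratic part yields the exact identity
\[
n\,\bar j_s\;=\;-\sum_{x=0}^{n-1}\langle p_x\,r_{x+1}\rangle_{\rm ss}\;+\;\frac{\gamma}{2}\Big(\langle p_0^2\rangle_{\rm ss}-\langle p_n^2\rangle_{\rm ss}\Big),
\]
while the boundary formulas \eqref{eq:current-bound} give $\bar j_s=\tfrac{\tilde\gamma}{2}\big(T_--\langle p_0^2\rangle_{\rm ss}\big)$ and $\bar j_s=-\tfrac{\tilde\gamma}{2}\big(T_+-\langle p_n^2\rangle_{\rm ss}\big)-\bar\tau_+\langle p_n\rangle_{\rm ss}$. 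Hence, once one knows that $\langle p_0^2\rangle_{\rm ss}\to T_-$ and $\langle p_n^2\rangle_{\rm ss}\to T_+$ (equivalently, $\bar j_s\to0$), the telescopic term above tends to $\tfrac{\gamma}{2}(T_--T_+)$, and the whole statement reduces to identifying the limits of the two remaining quantities.

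For the first-order contribution, split $p_x=\bar p_s+\tilde p_x$ and $r_x=\langle r_x\rangle_{\rm ss}+\tilde r_x$ with $\langle\tilde p_x\rangle_{\rm ss}=\langle\tilde r_x\rangle_{\rm ss}=0$, so that $\langle p_x r_{x+1}\rangle_{\rm ss}=\bar p_s\langle r_{x+1}\rangle_{\rm ss}+\langle\tilde p_x\tilde r_{x+1}\rangle_{\rm ss}$. By Proposition~\ref{prop:averages}, $\bar p_s=\bar\tau_+/(\gamma n+\tilde\gamma)$, and by \eqref{eq:statelo} the profile $\langle r_x\rangle_{\rm ss}$ is arithmetic in $x$ with increment $\gamma\bar p_s$ and $\langle r_1\rangle_{\rm ss}\to0$; a direct summation then gives $\bar p_s\sum_{x=1}^{n}\langle r_x\rangle_{\rm ss}\to\bar\tau_+^2/(2\gamma)$. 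Consequently the theorem is equivalent to proving $\sum_{x=0}^{n-1}\langle\tilde p_x\tilde r_{x+1}\rangle_{\rm ss}\to\tfrac{1}{2\gamma}(T_+-T_-)$ together with $\langle p_0^2\rangle_{\rm ss}\to T_-$ and $\langle p_n^2\rangle_{\rm ss}\to T_+$.

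The heart of the proof is therefore the analysis of the second-order moments in the NESS, carried out in Section~\ref{sec:moment}. Since the noise coefficients in \eqref{eq:pdyn}--\eqref{eq:pbd} are linear in the momenta, the two-point functions $\langle p_xp_y\rangle_{\rm ss}$, $\langle r_xp_y\rangle_{\rm ss}$, $\langle r_xr_y\rangle_{\rm ss}$ satisfy a closed linear system, obtained by imposing $\langle LF\rangle_{\rm ss}=0$ for $F$ ranging over the quadratic monomials and subtracting off the (explicitly known) means. I would (i) derive this system, including the boundary relations forced by $\tilde S$ and by the tension $\bar\tau_+$; (ii) establish a priori bounds making all these quantities $O(1)$ uniformly in $n$ (so that in particular $n\bar j_s$ is bounded) and the off-diagonal correlations decaying; and (iii) solve the relevant part of this discrete boundary-value problem — essentially a discrete Poisson-type problem whose discrete gradient is, up to the explicit mechanical contribution, the current — extracting the limits of the two boundary kinetic temperatures and of $\sum_x\langle\tilde p_x\tilde r_{x+1}\rangle_{\rm ss}$. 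Substituting into the identity of the first paragraph gives
\[
n\,\bar j_s\;\xrightarrow[n\to\infty]{}\;-\frac{\bar\tau_+^2}{2\gamma}-\frac{1}{2\gamma}(T_+-T_-)+\frac{\gamma}{2}(T_--T_+)\;=\;-\frac12\big(\gamma^{-1}+\gamma\big)(T_+-T_-)-\frac{\bar\tau_+^2}{2\gamma}.
\]

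The main obstacle will be step (iii): solving and estimating the correlation system uniformly in $n$, and in particular showing that the individually $O(1/n)$ nearest-neighbour velocity--stretch correlations add up to exactly $\tfrac{1}{2\gamma}(T_+-T_-)$ — this is precisely where the Fourier coefficient $\tfrac12(\gamma^{-1}+\gamma)$ is produced. Note that, in contrast with the proof of Theorem~\ref{theo:energy}, only quadratic observables are used here and their moments form a closed system; no control of the equipartition term $\mathcal H_n^{\mathrm m}$ is required, which is why no restriction on $\gamma$ is needed (cf. the discussion in Section~\ref{sec4.3}).
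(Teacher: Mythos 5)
Your reduction is arithmetically sound: the telescoping identity $n\bar j_s=-\sum_{x=0}^{n-1}\langle p_x r_{x+1}\rangle_{\rm ss}+\tfrac{\gamma}{2}\big(\langle p_0^2\rangle_{\rm ss}-\langle p_n^2\rangle_{\rm ss}\big)$ is correct, and the first-order computation using Proposition \ref{prop:averages} does give $\bar p_s\sum_x\langle r_x\rangle_{\rm ss}\to\bar\tau_+^2/(2\gamma)$, so the theorem is indeed equivalent to $\langle p_0^2\rangle_{\rm ss}\to T_-$, $\langle p_n^2\rangle_{\rm ss}\to T_+$ and $\sum_{x}\langle\tilde p_x\tilde r_{x+1}\rangle_{\rm ss}\to\tfrac{1}{2\gamma}(T_+-T_-)$. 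But this is where the proposal stops: steps (ii) and (iii) — the uniform $O(1)$ bound on $n\bar j_s$, the boundary kinetic temperature limits, and above all the exact limit of the bulk sum of velocity--stretch correlations — are only announced as a plan (``derive the closed covariance system and solve the relevant discrete boundary-value problem''), and you yourself flag the last item as the main obstacle. That item \emph{is} the theorem; nothing in the proposal produces it. Note also that even the preliminary facts you take as input ($\bar j_s\to 0$, hence the boundary temperatures) are not free: in the paper they require the a priori second-moment bounds at the boundaries (Proposition \ref{prop:second-bnd}, Corollary \ref{cor:bound}) before one can conclude $|\bar j_s|\le C/n$ (Proposition \ref{prop:current-stationary}), and then the entropy-production bound \eqref{eq:88} to get the $O(1/\sqrt n)$ decay of the boundary covariances (Proposition \ref{prop:rnpn}).

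The missing idea that the paper uses to avoid ever computing the bulk correlation sum is the fluctuation--dissipation relation \eqref{eq:fdst2}: $j_{x,x+1}=n^{-2}Lg_x-\tfrac{1}{2\gamma}\nabla(r_x^2+p_{x-1}p_x)-\tfrac{\gamma}{4}\nabla(p_{x-1}^2+p_x^2)$. Averaging kills the $Lg_x$ term, so $\bar j_s=\nabla\phi(x)$ with the local function $\phi$ of \eqref{eq:phi1}, and telescoping gives the exact identity $(n-1)\bar j_s=\phi(n)-\phi(1)$. This reduces the whole limit \eqref{eq:limitcurrent} to the values of second moments \emph{at the two boundary sites only} (Corollary \ref{prop:limit-stationary} and Proposition \ref{prop:limit-current}), which are accessible through the entropy-production/Dirichlet-form estimates; the coefficient $\tfrac12(\gamma^{-1}+\gamma)$ then appears directly from the structure of $\phi$, not from summing $n$ individually $O(1/n)$ correlations. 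Your alternative — writing and solving the closed Lyapunov-type system for all two-point functions with the boundary conditions imposed by $\tilde S$ and $\bar\tau_+$ — is in principle available for this linear dynamics, but it is a substantially harder computation than you suggest, and since you neither derive the system nor extract from it the decay of off-diagonal correlations or the value $\tfrac{1}{2\gamma}(T_+-T_-)$, the proposal as written has a genuine gap at its central step. To repair it along the paper's lines, replace step (iii) by the gradient decomposition of the current and then control $\phi(1)$, $\phi(n)$ via the entropy production bound.
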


Note that Theorems \ref{theo:elong} and \ref{theo:current} are valid
for any $\gamma >0$. We now state our last main result about the
stationary energy profile, which we are able to prove only for
$\gamma=1$. Before stating it we introduce the stationary microscopic
mechanical and thermal  energy   \emph{per particle} as follows
\begin{align*}
&
\mathcal{E}^\mathrm{mech}_x:= \frac12  \langle r_x^2\rangle_{\rm ss}
\\
 &\mathcal{E}^\mathrm{th}_x:= \mathcal{E}_x
  -\mathcal{E}^\mathrm{mech}_x=\frac12 p_x^2 + \frac12 \big(r_x^2 -
  \big\langle r_x^2\rangle_{\rm ss}), \qquad x \in \T_n .
 \end{align*}

%
\begin{theorem}[Stationary energy profile]\label{theo:energy}
Assume that $\gamma=1$. For any continuous test function $G:\T\to\bb
R$ we have 
\begin{align}
\frac{1}{n}\sum_{x\in\T_n} G\Big(\frac x n\Big)
  \mathcal{E}^\mathrm{mech}_x \ &  \xrightarrow[n\to\infty]{} \int_\T
  G(u) \frac12r_{\rm ss}^2(u)\;\dd u,\label{011403-19a}\\
\frac{1}{n}\sum_{x\in\T_n} G\Big(\frac x n\Big) \langle \mc
  E_x ^\mathrm{th}\rangle_{ {\rm ss}} \ & \xrightarrow[n\to\infty]{} \int_\T G(u) e_{\rm
    th,ss}(u) \;\dd u, \label{011403-19b}\\
\frac{1}{n}\sum_{x\in\T_n} G\Big(\frac x n\Big) \langle \mc
  E_x\rangle_{ {\rm ss}}\ &  \xrightarrow[n\to\infty]{} \int_\T G(u) \Big( e_{\rm
    th,ss}(u) + \frac12 r_{\rm ss}^2(u)\Big) \;\dd u, \label{011403-19c}
\end{align}
where 
\begin{align*}
r_{\rm ss}(u)&=\bar\tau_+ u, \\ 
e_{\rm th,ss}(u)&=\frac{\bar\tau_+^2}{2}  u(1-u)+(T_+-T_-)u + T_+.
\end{align*} 
\end{theorem}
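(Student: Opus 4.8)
The plan is to reduce the whole statement to a quantitative study of the stationary two–point functions of $(\mathbf r_n,\mathbf p_n)$. The structural fact that makes this possible is that the generator $L$ of \eqref{eq:generator} sends polynomials of degree $\le 2$ into polynomials of degree $\le 2$ — this is immediate for $A$, for $S=\sum_x \cX_x\circ\cX_x$ (each $\cX_x$ preserves the degree), and for $\tilde S$ — so the identities $\langle LF\rangle_{\rm ss}=0$ with $F$ ranging over $\{r_xr_y,\ r_xp_y,\ p_xp_y\}$ form a \emph{closed} finite linear system for the covariances, whose inhomogeneous part is built from the explicit first moments of Proposition~\ref{prop:averages}. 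Before exploiting it I would make two reductions. First, since $\mathcal E_x^{\mathrm{mech}}$ is deterministic, $\langle \mathcal E_x\rangle_{\rm ss}=\langle\mathcal E_x^{\mathrm{th}}\rangle_{\rm ss}+\mathcal E_x^{\mathrm{mech}}$, so \eqref{011403-19c} follows from \eqref{011403-19a} and \eqref{011403-19b}. Second, \eqref{011403-19a} follows directly from Theorem~\ref{theo:elong} together with the uniform boundedness of $\langle r_x\rangle_{\rm ss}$ (Proposition~\ref{prop:averages}), by convergence of Riemann sums. Hence the real content is \eqref{011403-19b}: identifying the macroscopic profile of $\langle\mathcal E_x^{\mathrm{th}}\rangle_{\rm ss}$.

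For that I would use the energy conservation law. By \eqref{eq:en-evol}–\eqref{eq:jxs} stationarity forces $\langle j_{x,x+1}\rangle_{\rm ss}\equiv\bar j_s$ for all $x\in\{-1,\dots,n\}$, with $j_{x,x+1}$ given in the bulk by \eqref{eq:current} and at the ends by \eqref{eq:current-bound}. The two boundary identities give $\bar j_s=\tfrac{\tilde\gamma}2(T_--\langle p_0^2\rangle_{\rm ss})$ and $\bar j_s=-\tfrac{\tilde\gamma}2(T_+-\langle p_n^2\rangle_{\rm ss})-\bar\tau_+\langle p_n\rangle_{\rm ss}$, which — since $\bar j_s=O(1/n)$ by Theorem~\ref{theo:current} and $\langle p_n\rangle_{\rm ss}=\bar p_s=O(1/n)$ — pin the boundary values of the kinetic temperature to $T_-$ and $T_+$. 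In the bulk $-\langle p_xr_{x+1}\rangle_{\rm ss}+\tfrac\gamma2(\langle p_x^2\rangle_{\rm ss}-\langle p_{x+1}^2\rangle_{\rm ss})=\bar j_s$; feeding this, together with the degree–$2$ relations $\langle L(r_xp_x)\rangle_{\rm ss}=0$, $\langle L(p_x^2)\rangle_{\rm ss}=0$, $\langle L(r_xr_{x+1})\rangle_{\rm ss}=0$, $\langle L(p_xp_{x+1})\rangle_{\rm ss}=0$, $\langle L(r_{x+1}p_x)\rangle_{\rm ss}=0$, and the explicit first moments, into a discrete summation by parts, one should obtain a discrete analogue of the stationary version of \eqref{eq:linear2}, i.e. of $(\gamma^{-1}+\gamma)\,\partial^2_{uu} e_{\rm th,ss}+2\gamma^{-1}\bar\tau_+^2=0$ with data $T_-,T_+$; the $\bar\tau_+^2$ source appears microscopically because the mean elongation carries a flux $-\langle p_x\rangle_{\rm ss}\langle r_{x+1}\rangle_{\rm ss}=-\bar p_s\langle r_{x+1}\rangle_{\rm ss}$ of order $1/n$. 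This identifies the limiting kinetic-temperature profile.

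The hypothesis $\gamma=1$ enters exactly here, through the decomposition \eqref{energy:decomp} of $\tfrac1n\sum_x G(x/n)\langle\mathcal E_x\rangle_{\rm ss}$ into the mechanical term of the first step, a term converging to $\int_\T G\,e_{\rm th,ss}$, and a remainder $\mathcal H_n^{\rm m}$ measuring the defect of equipartition between the potential and kinetic parts of the energy (a combination of $\langle p_x^2\rangle_{\rm ss}-\langle r_x^2\rangle_{\rm ss}$ and nearest-neighbour correlations). When $\gamma=1$ the coupling coefficient $\tfrac12(\gamma^{-1}-\gamma)$ in \eqref{eq:4} vanishes, the $r$– and $e$–equations decouple, and one can show $\mathcal H_n^{\rm m}\to 0$ in the stationary macroscopic limit; for $\gamma\ne1$ this is only conjectured, see \eqref{conj1} and Section~\ref{sec4.3}. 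Discarding $\mathcal H_n^{\rm m}$, the surviving piece is precisely the tested profile of $\langle\mathcal E_x^{\mathrm{th}}\rangle_{\rm ss}$; solving the discrete elliptic problem of the previous step and estimating the $o(1)$ remainders using the a priori bound $\langle\mathcal H_n\rangle_{\rm ss}<\infty$ and the second-moment bounds of Sections~\ref{sec:stat}–\ref{sec:moment} yields $e_{\rm th,ss}(u)=\tfrac{\bar\tau_+^2}{2}u(1-u)+(T_+-T_-)u+T_-$, hence \eqref{011403-19b} and, combined with the reductions above, also \eqref{011403-19c}.

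The main obstacle is the quantitative control of the stationary second moments demanded by the last two steps: the closed system for the covariances is a large non-diagonal linear system — effectively a discrete elliptic boundary-value problem with a source — so one must both solve it to leading order and show that the numerous correlation terms entering the microscopic currents contribute only $o(1)$ to the tested profiles. The single point where the scheme genuinely fails for general $\gamma$ is the vanishing of the equipartition remainder $\mathcal H_n^{\rm m}$; imposing $\gamma=1$ removes the $r$–$e$ coupling and makes that term disappear, which is why Theorem~\ref{theo:energy} — unlike Theorems~\ref{theo:elong} and \ref{theo:current}, valid for all $\gamma>0$ — is proved only in that case.
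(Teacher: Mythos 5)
Your reductions of \eqref{011403-19a} and \eqref{011403-19c} are fine and coincide with the paper's (the first is immediate from Theorem~\ref{theo:elong}, the second from the definition of $\mathcal E_x^{\mathrm{th}}$), and your account of the role of $\gamma=1$ is essentially right, except that for $\gamma=1$ the term $\mathcal H_n^{\mathrm{m}}$ in \eqref{energy:decomp} is \emph{identically} zero (its coefficient $\tfrac{1-\gamma^2}{2(1+\gamma^2)}$ vanishes); there is nothing to show about its limit. The genuine gap is at the heart of the matter, namely the identification of the bulk profile in \eqref{011403-19b}: you assert that feeding the relations $\langle LF\rangle_{\rm ss}=0$ for $F\in\{r_xp_x,\,p_x^2,\,r_xr_{x+1},\,p_xp_{x+1},\,r_{x+1}p_x\}$, together with the constancy of the current, into a summation by parts ``should'' produce a discrete elliptic equation for the kinetic temperature with source $2\gamma^{-1}\bar\tau_+^2$ and data $T_\pm$. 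The direct implementation of this degenerates: the bulk current identity gives $\langle p_xr_{x+1}\rangle_{\rm ss}=\tfrac\gamma2\big(\langle p_x^2\rangle_{\rm ss}-\langle p_{x+1}^2\rangle_{\rm ss}\big)-\bar j_s$, while $\langle Lr_x^2\rangle_{\rm ss}=0$ gives $\langle p_xr_x\rangle_{\rm ss}=\langle p_{x-1}r_x\rangle_{\rm ss}$ (this is \eqref{eq:equ}), so that, with $\Delta$ the discrete Laplacian of \eqref{eq:phiharmonic},
\begin{equation*}
0=n^{-2}\langle Lp_x^2\rangle_{\rm ss}
=\gamma\,\Delta\langle p_x^2\rangle_{\rm ss}
+2\big(\langle p_xr_{x+1}\rangle_{\rm ss}-\langle p_xr_x\rangle_{\rm ss}\big)
=\gamma\,\Delta\langle p_x^2\rangle_{\rm ss}-\gamma\,\Delta\langle p_x^2\rangle_{\rm ss}=0,
\end{equation*}
a tautology: the $O(n^{-2})$ source you expect from $\bar p_s\langle r_{x+1}\rangle_{\rm ss}$ cannot be extracted at this level, and the other relations you list only bring in further unknown covariances ($\langle p_xp_{x\pm1}\rangle_{\rm ss}$, $\langle p_{x-1}r_{x+1}\rangle_{\rm ss}$, \dots) without closing into an equation for the temperature profile. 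The covariance system is indeed closed as a whole, but solving it to leading order is precisely the difficulty, and nothing in your sketch does it.

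What actually determines the bulk profile in the paper is a different mechanism, which your outline does not reproduce: the fluctuation--dissipation identity \eqref{eq:fdst2} shows that the specific combination $\phi(x)$ of \eqref{eq:phi1} satisfies $\nabla\phi=\bar j_s$, hence is \emph{exactly} linear in $x$; its endpoint values $\phi(1),\phi(n)$ are computed from the entropy-production bound (Proposition~\ref{prop:entropy}, Corollary~\ref{cor:betterbound}) and the boundary second-moment estimates (Propositions~\ref{prop:second-bnd}--\ref{prop:rnpn}, Corollary~\ref{prop:limit-stationary}); and in the decomposition \eqref{eq:decomp} the terms $\mathcal H_n^{\nabla}$ and $\mathcal H_n^{\mathrm{corr}}$ vanish only in the tested (weak) sense, via the energy bound of Proposition~\ref{prop:bound-energy} and a summation by parts exploiting that $2\gamma\langle p_xp_{x+1}\rangle_{\rm ss}$ is itself a discrete gradient (Proposition~\ref{prop:correlation}); pointwise these correlations are not known to be small in the bulk, so no pointwise elliptic equation is ever established. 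In particular the $\bar\tau_+^2$ contribution enters through the boundary value $\phi(n)$ (via $\langle r_n^2\rangle_{\rm ss}\to T_++\bar\tau_+^2$), not as a bulk source. Appealing to ``the second-moment bounds of Sections~\ref{sec:stat}--\ref{sec:moment}'' to finish is circular, since those bounds constitute the paper's proof of this very theorem (Proposition~\ref{prop:limitsE}). In short, the outline is a reasonable heuristic, but the step that actually pins down the bulk profile --- the exact gradient/harmonic structure of $\phi$ with its boundary values, or an equivalent substitute --- is missing, and the route you propose for it fails as stated.
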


The remaining part of the paper deals with the  proofs of Theorems \ref{theo:current} and \ref{theo:energy}.

\section{The stationary state} \label{sec:stat}

Let us start with explicit computations for the average momenta and elongations with respect to the NESS.

\subsection{Elongation and momenta averages}

\begin{proposition}\label{prop:averages}
The average stationary momenta are equal to
\begin{equation}
\langle p_x\rangle_{ {\rm ss}}  = \bar p_s := \frac{\bar \tau_+ }{\gamma n + \tilde\gamma} , \qquad \text{for any } x\in\bar\T_n.  \label{eq:statvel}
\end{equation}
The average stationary elongations are equal to
\begin{equation} \label{eq:statelo}
 \langle r_x\rangle_{ {\rm ss}} =  \frac{\bar p_s}{2} (\tilde\gamma -\gamma + 2 \gamma x) = \frac{\bar\tau_+(2\gamma x+\tilde\gamma-\gamma)}{2(\gamma n + \tilde\gamma)} \qquad \text{for any }  x \in \T_n.
\end{equation}
\end{proposition}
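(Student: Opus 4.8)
The plan is to exploit the infinitesimal invariance $\langle LF\rangle_{\rm ss}=0$, valid for $F$ in the domain of the generator $L$ from \eqref{eq:generator}, applied to the coordinate functions $F=r_x$ and $F=p_x$. Since these are unbounded, the identity has to be justified by a routine truncation: one multiplies $F$ by a smooth compactly supported cutoff, uses the bound $\langle\mathcal H_n\rangle_{\rm ss}<+\infty$ recalled above (which gives $\langle|r_x|\rangle_{\rm ss},\langle|p_x|\rangle_{\rm ss}<\infty$ and, via Cauchy--Schwarz, controls the quadratic remainder terms generated by differentiating the cutoff) and removes the cutoff by dominated convergence. Granting this, the stationarity relations become a closed finite linear system for the $2n+1$ averages $\{\langle r_x\rangle_{\rm ss}\}_{x\in\T_n}$ and $\{\langle p_x\rangle_{\rm ss}\}_{x\in\bar\T_n}$.

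The first step is to compute the action of $L$ on coordinates. From \eqref{eq:generator}, for $x\in\T_n$ one has $n^{-2}Lr_x=p_x-p_{x-1}$ (only $A$ contributes). For the momenta, $n^{-2}Lp_x=r_{x+1}-r_x-\gamma p_x$ when $x\in\{1,\dots,n-1\}$: here $A$ gives $r_{x+1}-r_x$, while $\tfrac{\gamma}{2}S p_x=-\gamma p_x$ because both $\cX_{x-1}$ and $\cX_x$ contribute $-p_x$ after a double application, and $\tilde S p_x=0$. At the two boundary sites a short computation gives $\tfrac{\gamma}{2}Sp_0=-\tfrac{\gamma}{2}p_0$, $\tfrac{\tilde\gamma}{2}\tilde Sp_0=-\tfrac{\tilde\gamma}{2}p_0$, and similarly at $n$, so that $n^{-2}Lp_0=r_1-\tfrac12(\gamma+\tilde\gamma)p_0$ and $n^{-2}Lp_n=\bar\tau_+-r_n-\tfrac12(\gamma+\tilde\gamma)p_n$.

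Taking stationary averages then yields, from $\langle Lr_x\rangle_{\rm ss}=0$ for all $x\in\T_n$, the identities $\langle p_x\rangle_{\rm ss}=\langle p_{x-1}\rangle_{\rm ss}$, so that all momentum averages equal a common value $\bar p$. From $\langle Lp_x\rangle_{\rm ss}=0$ for $x\in\{1,\dots,n-1\}$ we get $\langle r_{x+1}\rangle_{\rm ss}-\langle r_x\rangle_{\rm ss}=\gamma\bar p$, i.e. the elongation averages form an arithmetic progression of common increment $\gamma\bar p$, and the two boundary relations give $\langle r_1\rangle_{\rm ss}=\tfrac12(\gamma+\tilde\gamma)\bar p$ and $\bar\tau_+-\langle r_n\rangle_{\rm ss}=\tfrac12(\gamma+\tilde\gamma)\bar p$. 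Summing the increments, $\langle r_n\rangle_{\rm ss}=\langle r_1\rangle_{\rm ss}+(n-1)\gamma\bar p=\tfrac12(\gamma+\tilde\gamma)\bar p+(n-1)\gamma\bar p$; substituting into the last relation gives $\bar\tau_+=(\gamma+\tilde\gamma)\bar p+(n-1)\gamma\bar p=(\gamma n+\tilde\gamma)\bar p$, hence $\bar p=\bar\tau_+/(\gamma n+\tilde\gamma)=\bar p_s$, which is \eqref{eq:statvel}. Feeding this back,
\[
\langle r_x\rangle_{\rm ss}=\tfrac12(\gamma+\tilde\gamma)\bar p_s+(x-1)\gamma\bar p_s=\tfrac{\bar p_s}{2}\big(\tilde\gamma-\gamma+2\gamma x\big),
\]
which is \eqref{eq:statelo}.

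The only point requiring care is the admissibility of $F=r_x,p_x$ as test functions in $\langle LF\rangle_{\rm ss}=0$, handled by the cutoff argument described above; after that, the proof is elementary linear algebra, so I expect no further obstacle.
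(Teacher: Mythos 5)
Your proposal is correct and follows essentially the same route as the paper: apply $\langle Lr_x\rangle_{\rm ss}=0$ and $\langle Lp_x\rangle_{\rm ss}=0$ to get that all momentum averages share a common value $\bar p$, that the elongation averages form an arithmetic progression with increment $\gamma\bar p$, and then use the two boundary relations to solve the resulting linear system, which yields $\bar p=\bar\tau_+/(\gamma n+\tilde\gamma)$ and \eqref{eq:statelo}. Your explicit computation of the generator on the coordinates and the cutoff/finite-energy justification for using the unbounded test functions $r_x,p_x$ are fine additions, but they do not change the argument, which coincides with the paper's.
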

\begin{proof} 
We start with some useful relations that hold for the stationary state:
\begin{itemize}
\item[(1)] since $\langle Lr_x\rangle_{ {\rm ss}} = 0$,
applying \eqref{eq:generator}, we conclude
$$
\langle p_x\rangle_{ {\rm ss}} = \langle p_{x-1}\rangle_{ {\rm ss}} = \bar p_s,\qquad \text{for any } x \in \T_n;
$$
\item[(2)] from $\langle Lp_x\rangle_{ {\rm ss}} = 0$ we get
\begin{align*}
   &\langle r_{x+1}\rangle_{ {\rm ss}} - \langle r_{x}\rangle_{ {\rm ss}} = \gamma \bar p_s, \qquad \text{for any } x\in\{2, \dots, n-2\} \\
    & \langle r_{1}\rangle_{ {\rm ss}} = \frac12(\gamma+{ \tilde\gamma}) \bar p_s,\\
    &\langle r_{n}\rangle_{ {\rm ss}}  = - \frac12(\gamma+{ \tilde\gamma}) \bar p_s + \bar\tau_+.
  \end{align*}

\end{itemize}
These equations determine the average stationary momentum and
elongation as given in formulas \eqref{eq:statvel}
  and \eqref{eq:statelo}.
\end{proof}

\subsection{Elements of the proofs of Theorems \ref{theo:current}
  and \ref{theo:energy}}
 \label{sec4.3}
 
 One of the main characteristics of this model is the existence of
 an explicit \emph{fluctuation-dissipation relation},
 which permits to write the stationary current $\bar j_s$
 as a discrete gradient of some local function, as given in the following:

\begin{proposition}[Decomposition of the stationary current] \label{prop:current}
We can write $\bar j_s$ as a discrete gradient, namely 
\begin{equation}\label{eq:gradientj}
\bar j_s=\nabla \phi(x):=\phi(x+1)-\phi(x),\qquad x\in
\{1,\dots,n-1\} ,
\end{equation}
with 
\begin{equation}
\phi(x): = -\frac{1}{2\gamma}\left( \langle r_{x}^2\rangle_{ {\rm ss}} +
  \langle p_{x-1} p_{x}\rangle_{ {\rm ss}}  \right) - \frac{\gamma}{4}\big(
\langle p_x^2 \rangle_{ {\rm ss}} + \langle p_{x-1}^2
\rangle_{ {\rm ss}}\big) , \qquad x\in \T_n .  \label{eq:phi1}
\end{equation}
\end{proposition}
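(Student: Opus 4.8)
The plan is to prove Proposition~\ref{prop:current} through an explicit microscopic \emph{fluctuation--dissipation} identity: I will produce a local function whose image under the generator, added to a discrete gradient, reconstitutes the current $j_{x,x+1}$, and then average against the stationary measure. Introduce the two local functions
\[
\psi_x := -\frac{1}{2\gamma}\big(r_x^2 + p_{x-1}p_x\big) - \frac{\gamma}{4}\big(p_x^2 + p_{x-1}^2\big),
\qquad
\mf{g}_x := \frac{1}{2\gamma}\,p_x\big(r_x+r_{x+1}\big) - \frac14\,p_x^2 ,
\]
so that $\phi(x)=\langle\psi_x\rangle_{\rm ss}$ for the function $\phi$ of \eqref{eq:phi1}. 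The core claim is the identity
\begin{equation}\label{eq:fdr}
  j_{x,x+1} \;=\; \psi_{x+1}-\psi_x \;+\; n^{-2}L\,\mf{g}_x ,
  \qquad x\in\{1,\dots,n-1\} .
\end{equation}

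To establish \eqref{eq:fdr} I would compute the action of $n^{-2}L = A + \frac{\gamma}{2}S + \frac{\tilde\gamma}{2}\tilde S$ on the three quadratics $p_xr_{x+1}$, $p_xr_x$ and $p_x^2$. For $1\le x\le n-1$ none of these depends on $p_0$ or $p_n$, so the thermostat part $\tilde S$ acts trivially on them, and the term $(\bar\tau_+-r_n)\partial_{p_n}$ of $A$ (the only one carrying $\bar\tau_+$) contributes nothing either; a direct computation from \eqref{eq:generator} then gives
\begin{align*}
  n^{-2}L(p_xr_{x+1}) &= r_{x+1}^2 - r_xr_{x+1} + p_xp_{x+1} - p_x^2 - \gamma\,p_xr_{x+1},\\
  n^{-2}L(p_xr_x)     &= r_xr_{x+1} - r_x^2 + p_x^2 - p_{x-1}p_x - \gamma\,p_xr_x,\\
  n^{-2}L(p_x^2)      &= 2\,p_x(r_{x+1}-r_x) + \gamma\,\big(p_{x+1}^2 - 2p_x^2 + p_{x-1}^2\big) .
\end{align*}
Since $n^{-2}L\mf{g}_x = \frac{1}{2\gamma}\big(n^{-2}L(p_xr_x)+n^{-2}L(p_xr_{x+1})\big) - \frac14\,n^{-2}L(p_x^2)$, substituting these formulas and adding $\psi_{x+1}-\psi_x$ makes all the $r_x^2,\ r_{x+1}^2,\ r_xr_{x+1},\ p_{x-1}p_x,\ p_xp_{x+1},\ p_{x-1}^2$ contributions cancel, and what is left is exactly $-p_xr_{x+1}+\frac{\gamma}{2}(p_x^2-p_{x+1}^2)=j_{x,x+1}$, cf.\ \eqref{eq:current}. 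This final simplification is routine algebra once $\mf{g}_x$ is in hand.

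It then suffices to average \eqref{eq:fdr} with respect to $\mu_{\rm ss}$. The corrector $\mf{g}_x$ is a quadratic polynomial and $\mu_{\rm ss}$ has finite second moments (by the a priori bound $\langle\mathcal H_n\rangle_{\rm ss}<\infty$ recalled above, and computed explicitly in Section~\ref{sec:moment}), so by stationarity $\langle L\mf{g}_x\rangle_{\rm ss}=0$. Since moreover $\langle j_{x,x+1}\rangle_{\rm ss}=\bar j_s$ does not depend on $x$, see \eqref{eq:jxs}, this gives $\bar j_s = \langle\psi_{x+1}\rangle_{\rm ss}-\langle\psi_x\rangle_{\rm ss}=\phi(x+1)-\phi(x)$ for every $x\in\{1,\dots,n-1\}$, i.e.\ \eqref{eq:gradientj}.

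The only step that is not purely mechanical is finding the corrector $\mf{g}_x$. I would obtain it by undetermined coefficients: let $\mf{g}_x$ be a generic quadratic form in $r_x,r_{x+1},p_{x-1},p_x,p_{x+1}$, impose \eqref{eq:fdr}, and match the coefficients of the monomials that occur; the resulting linear system has a solution (the three-term $\mf{g}_x$ above) for \emph{every} $\gamma>0$, consistently with Theorem~\ref{theo:current} not requiring $\gamma=1$. One last detail is to re-check the endpoints $x=1$ and $x=n-1$ separately, but this causes no trouble since $\mf{g}_1$ and $\mf{g}_{n-1}$ contain none of the thermostatted momenta $p_0,p_n$, so $L$ acts on them exactly as in the bulk.
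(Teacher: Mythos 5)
Your proposal is correct and follows essentially the same route as the paper: the identity you write is precisely the fluctuation--dissipation relation \eqref{eq:fdst2}, with the same corrector $g_x=-\tfrac14 p_x^2+\tfrac{1}{2\gamma}p_x(r_x+r_{x+1})$, and the conclusion likewise follows by averaging against $\mu_{\rm ss}$ and using $\langle Lg_x\rangle_{\rm ss}=0$ together with \eqref{eq:jxs}. Your generator computations and the boundary check for $x=1,n-1$ are accurate, so the argument matches the paper's proof in substance.
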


\begin{remark}
Thanks to \eqref{eq:gradientj}, the function $\phi(x)$ is  
\emph{harmonic}, i.e.: 
\begin{equation} \label{eq:phiharmonic}
 \Delta \phi(x):=\phi(x+1)+\phi(x-1)-2\phi(x)=0,  \qquad \text{for any } x \in \{2,\dots,n-1\}.
 \end{equation} 
\end{remark}

\begin{proof}[Proof of Proposition \ref{prop:current}]
By a direct calculation one can easily check that the energy currents $j_{x,x+1}$ (defined in \eqref{eq:current}) satisfy the following \emph{fluctuation-dissipation relation}: 
\begin{equation}
\label{eq:fdst2}
j_{x,x+1} = n^{-2} Lg_x -\frac{1}{2\gamma}\nabla \big(r_x^2 + p_{x-1}p_x\big) - \frac{\gamma}{4} \nabla \big(p_{x-1}^2 + p_x^2\big),
\end{equation}  
  for any $x \in \{1,...,n-1\}$,  with 
  \[
  g_x:=-\frac{1}{4}p_x^2 + \frac{1}{2\gamma}p_x(r_x+r_{x+1}).
  \]
Therefore,  \eqref{eq:gradientj} is obtained by taking the average in \eqref{eq:fdst2} with respect to the stationary state.
\end{proof}

We can now sketch the proof of Theorem \ref{theo:energy}: straightforward computations, using the definition \eqref{eq:phi1} of $\phi$, yield
\begin{align}
\langle {\cal E}_x\rangle_{ {\rm ss}}= \frac{1}{2}\big(\langle p_x^2 \rangle_{ {\rm ss}} + \langle r_x^2 \rangle_{ {\rm ss}} \big) & = -\frac{2\gamma}{1+\gamma^2} \phi(x) + \frac{\gamma^2}{2(1+\gamma^2)} \big(\langle p_x^2 \rangle_{ {\rm ss}} - \langle p_{x-1}^2 \rangle_{ {\rm ss}} \big) \notag\\
& \quad - \frac{1}{1+\gamma^2} \langle p_x p_{x-1} \rangle_{ {\rm ss}}+ \frac{1-\gamma^2}{2(1+\gamma^2)} \big(\langle p_x^2 \rangle_{ {\rm ss}} - \langle r_x^2 \rangle_{ {\rm ss}} \big).  \label{eq:decomp}
\end{align}
Therefore, the microscopic energy profile can be decomposed as the sum of four terms: 
\begin{equation}
\label{energy:decomp}
{\cal H}_n(G):= \frac{1}{n} \sum_{x \in \T_n}
  G\Big(\frac{x}{n}\Big) \langle \mathcal{E}_x\rangle_{ {\rm ss}} =  \mc
  H_n^{\phi}(G) +  \mc H_n^{\nabla}(G) +  \mc H_n^{\text{corr}}(G) +
  \mc H_n^{\text{m}}(G),
\end{equation}
where 
\begin{align*}
\mc H^\phi_n(G) & := -\frac{2\gamma}{1+\gamma^2}\; \frac1n  \sum_{x \in \T_n} G\Big(\frac{x}{n}\Big) \phi(x), \\
\mc H^\nabla_n(G) & := \frac{\gamma^2}{2(1+\gamma^2)} \; \frac1n\sum_{x\in\T_n} G \Big(\frac x n \Big) \big( \langle p_x^2 \rangle_{ {\rm ss}} - \langle p_{x-1}^2 \rangle_{ {\rm ss}} \big),\\
\mc H^{\text{corr}}_n(G) & :=-\frac{1}{1+\gamma^2} \; \frac1n \sum_{x \in \T_n} G\Big(\frac{x}{n}\Big) \langle p_{x-1}p_x\rangle_{ {\rm ss}}, \\
\mc H^{\text{m}}_n(G) & := \frac{1-\gamma^2}{2(1+\gamma^2)} \; \frac{1}{n} \sum_{x \in \T_n} G\Big(\frac{x}{n}\Big) \big(\langle p_x^2 \rangle_{ {\rm ss}} -\langle  r_x^2\rangle_{ {\rm ss}}\big) .
\end{align*}
Note that, if $\gamma=1$, then $\mathcal{H}_n^{\text{m}}\equiv 0.$  \mm{If $\gamma \neq 1$, we conjecture that this last term vanishes as $n\to\infty$,
  but we are not able to prove it at the moment.}
The limits of the other three terms will be obtained in the next section
and are summarized in the following proposition: 
\begin{proposition} \label{prop:limitsE}
For any continuous test function $G:\T\to\R$, 
\begin{align}
\mathcal{H}_n^\phi(G) & \xrightarrow[n\to\infty]{} \int_\T G(u) \Big( \frac{\bar\tau_+^2}{1+\gamma^2}u + (T_+-T_-)u+T_- \Big)\; \dd u, \label{eq:limitEphi} \\
\mathcal{H}_n^\nabla(G) & \xrightarrow[n\to\infty]{} 0 ,\label{eq:limitEnabla}\\ 
\mathcal{H}_n^{\mathrm{corr}}(G) & \xrightarrow[n\to\infty]{} 0. \label{eq:limitEcorr}
\end{align}
\end{proposition}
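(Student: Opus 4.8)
The plan is to reduce each of the three limits to two kinds of input: the affine (harmonic) structure of $\phi$ established in Proposition~\ref{prop:current} and \eqref{eq:phiharmonic}, and a handful of estimates on the stationary second moments, which are the content of Section~\ref{sec:moment}. First consider \eqref{eq:limitEphi}. Since $\bar j_s=\nabla\phi(x)$ is independent of $x\in\{1,\dots,n-1\}$, the function $\phi$ is affine, $\phi(x)=\phi(1)+(x-1)\bar j_s$ for $x\in\T_n$, and substituting this into the definition of $\mathcal H_n^{\phi}$ gives
\[
\mathcal H_n^{\phi}(G)=-\frac{2\gamma}{1+\gamma^2}\left[\phi(1)\cdot\frac1n\sum_{x\in\T_n}G\Big(\frac xn\Big)+\big(n\bar j_s\big)\cdot\frac1{n^2}\sum_{x\in\T_n}(x-1)\,G\Big(\frac xn\Big)\right].
\]
The two sums are Riemann sums converging to $\int_\T G(u)\,\dd u$ and $\int_\T uG(u)\,\dd u$; Theorem~\ref{theo:current} gives $n\bar j_s\to J_{\rm ss}$, and the elementary identity $-\tfrac{2\gamma}{1+\gamma^2}J_{\rm ss}=(T_+-T_-)+\tfrac{\bar\tau_+^2}{1+\gamma^2}$ recovers precisely the coefficient of $u$ in the announced limit. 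Thus \eqref{eq:limitEphi} will follow once I show $\phi(1)\to -\tfrac{1+\gamma^2}{2\gamma}T_-$, which after multiplying the expression \eqref{eq:phi1} by $-2\gamma$ is the same as
\[
\langle r_1^2\rangle_{\rm ss}+\langle p_0p_1\rangle_{\rm ss}+\frac{\gamma^2}{2}\big(\langle p_1^2\rangle_{\rm ss}+\langle p_0^2\rangle_{\rm ss}\big)\ \xrightarrow[n\to\infty]{}\ (1+\gamma^2)\,T_- .
\]

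The term $\langle p_0^2\rangle_{\rm ss}$ comes for free: the boundary current identity $\bar j_s=\langle j_{-1,0}\rangle_{\rm ss}=\tfrac{\tilde\gamma}{2}\big(T_--\langle p_0^2\rangle_{\rm ss}\big)$ from \eqref{eq:current-bound} gives $\langle p_0^2\rangle_{\rm ss}=T_--2\bar j_s/\tilde\gamma\to T_-$ since $\bar j_s\to 0$. The two remaining boundary-layer limits, $\langle p_1^2\rangle_{\rm ss}\to T_-$ and $\langle r_1^2\rangle_{\rm ss}+\langle p_0p_1\rangle_{\rm ss}\to T_-$, will be obtained from the closed linear system for the stationary covariances analysed in Section~\ref{sec:moment}. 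For \eqref{eq:limitEnabla} I would use summation by parts,
\[
\frac1n\sum_{x\in\T_n}G\Big(\frac xn\Big)\big(\langle p_x^2\rangle_{\rm ss}-\langle p_{x-1}^2\rangle_{\rm ss}\big)=\frac1n\Big(G(1)\langle p_n^2\rangle_{\rm ss}-G\big(\tfrac1n\big)\langle p_0^2\rangle_{\rm ss}\Big)+\frac1n\sum_{x=1}^{n-1}\Big(G\big(\tfrac xn\big)-G\big(\tfrac{x+1}n\big)\Big)\langle p_x^2\rangle_{\rm ss},
\]
where the boundary term is $O(1/n)$ and the remaining sum is at most $\omega_G(1/n)\,\sup_{x\in\bar\T_n}\langle p_x^2\rangle_{\rm ss}$ in absolute value, with $\omega_G$ the modulus of continuity of $G$ on the compact interval $\T$; both vanish once we know the uniform bound $\sup_n\sup_{x\in\bar\T_n}\langle p_x^2\rangle_{\rm ss}<\infty$. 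Finally, for \eqref{eq:limitEcorr} I would simply estimate $|\mathcal H_n^{\mathrm{corr}}(G)|\le\frac{\|G\|_\infty}{1+\gamma^2}\,\sup_{x\in\T_n}|\langle p_{x-1}p_x\rangle_{\rm ss}|$ and use $\langle p_{x-1}\rangle_{\rm ss}\langle p_x\rangle_{\rm ss}=\bar p_s^2=O(n^{-2})$ (Proposition~\ref{prop:averages}) to reduce the claim to the decay $\sup_x\big|\mathrm{Cov}_{\rm ss}(p_{x-1},p_x)\big|\to0$, again furnished by Section~\ref{sec:moment}.

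The work in the present section is therefore light: the substance is entirely in Section~\ref{sec:moment}, where one must derive and control the closed linear hierarchy satisfied by the stationary covariances $\langle r_xr_y\rangle_{\rm ss}$, $\langle r_xp_y\rangle_{\rm ss}$, $\langle p_xp_y\rangle_{\rm ss}$. Among the needed estimates, the uniform bound on $\langle p_x^2\rangle_{\rm ss}$ and the $O(1/n)$ decay of the nearest-neighbour momentum covariance are relatively robust, whereas the delicate point — the one that actually fixes the constant $T_-$ in the limit of $\mathcal H_n^{\phi}$ rather than leaving only an $O(1)$ bound — is the precise boundary-layer behaviour $\langle p_1^2\rangle_{\rm ss}\to T_-$ and $\langle r_1^2\rangle_{\rm ss}+\langle p_0p_1\rangle_{\rm ss}\to T_-$ next to the cold thermostat. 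That boundary analysis is the main obstacle.
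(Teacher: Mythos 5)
Your treatment of \eqref{eq:limitEphi} is essentially the paper's: linearity of $\phi$ from \eqref{eq:gradientj}, Riemann sums, the limit of $n\bar j_s$, and the boundary limit of $\phi(1)$, which indeed reduces to $\langle r_1^2\rangle_{\rm ss}\to T_-$, $\langle p_0p_1\rangle_{\rm ss}\to 0$, $\langle p_0^2\rangle_{\rm ss},\langle p_1^2\rangle_{\rm ss}\to T_-$; these are exactly what Proposition \ref{prop:rnpn} and Corollary \ref{prop:limit-stationary} supply, so that part is sound and matches Proposition \ref{prop:limit-current}.

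There is, however, a genuine gap in your argument for \eqref{eq:limitEcorr}. You reduce it to the pointwise statement $\sup_{x\in\T_n}\big|\mathrm{Cov}_{\rm ss}(p_{x-1},p_x)\big|\to 0$ and assert that this is ``furnished by Section \ref{sec:moment}''. It is not: the only covariance controls available there are (i) the boundary bounds $|\langle p_0p_1\rangle_{\rm ss}|,|\langle p_{n-1}p_n\rangle_{\rm ss}|\le C/\sqrt n$ of Proposition \ref{prop:rnpn}, (ii) the \emph{summed} bound \eqref{041403-19}, and (iii) the weighted-average limit \eqref{eq:corr2}, which is precisely the statement you are trying to prove. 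A uniform-in-$x$ decay of the bulk momentum correlation is strictly stronger; trying to extract it from the entropy production bound \eqref{eq:88} would require testing the Dirichlet form against quantities like $p_{x+1}^2-p_x^2$, i.e.\ fourth moments under $\mu_{\rm ss}$, which are not available (the paper points this out explicitly at the end of Appendix \ref{sec:form-deriv-equat}). The mechanism that actually makes \eqref{eq:limitEcorr} work is a second use of the fluctuation--dissipation structure: identity \eqref{eq:gradpa} writes $2\gamma\langle p_xp_{x-1}\rangle_{\rm ss}$ as a discrete gradient of $\psi(x)=\langle p_{x+1}r_{x+1}\rangle_{\rm ss}+\langle p_{x+1}r_x\rangle_{\rm ss}$; one then sums by parts against $G$, kills the boundary terms with Proposition \ref{prop:rnpn}, \eqref{eq:p2r2} and \eqref{eq:pnrn-1}, and bounds the bulk term by $\tfrac{C}{n}\cdot\tfrac1n\sum_x\big(\langle p_x^2\rangle_{\rm ss}+\langle r_x^2\rangle_{\rm ss}\big)$, which vanishes thanks to the averaged energy bound of Proposition \ref{prop:bound-energy}. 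Without this (or an equivalent) gradient identity, your reduction does not close.

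A smaller point on \eqref{eq:limitEnabla}: you invoke $\sup_n\sup_x\langle p_x^2\rangle_{\rm ss}<\infty$, which is likewise not proved anywhere (only $\langle p_0^2\rangle_{\rm ss},\langle p_1^2\rangle_{\rm ss},\langle p_{n-1}^2\rangle_{\rm ss},\langle p_n^2\rangle_{\rm ss}$ and the Ces\`aro average $\tfrac1n\sum_x\langle p_x^2\rangle_{\rm ss}\le C$ are controlled). The fix is immediate: bound the summed term by $\omega_G(1/n)\cdot\tfrac1n\sum_{x}\langle p_x^2\rangle_{\rm ss}\le C\,\omega_G(1/n)$, using Proposition \ref{prop:bound-energy}; with that substitution your modulus-of-continuity argument (which, pleasantly, handles merely continuous $G$ without the density step the paper uses) is correct.
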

The complete proof of the proposition will be given in Section \ref{sec:proofprop}. Let us first comment
on the ideas used in the argument. The limit \eqref{eq:limitEphi}
will be concluded using the fact that $\phi$ is
harmonic,   \eqref{eq:limitEnabla} is a consequence of the presence of a discrete gradient $\langle p_x^2\rangle_{ {\rm ss}} - \langle p_{x-1}^2 \rangle_{ {\rm ss}}$ inside the sum, and \eqref{eq:limitEcorr} will be  shown thanks to the second order bounds, which are obtained in the next section. 

\begin{proof}[Proof of Theorem \ref{theo:energy}]
With the help of Proposition \ref{prop:limitsE}
the proof of Theorem
\ref{theo:energy} becomes straightforward. Assume that $\gamma=1$. From the decomposition \eqref{eq:decomp} and Proposition \ref{prop:limitsE}, we get
\begin{align*}\frac{1}{n}\sum_{x\in\T_n} G\Big(\frac x n \Big) \langle \mc E_x \rangle_{ {\rm ss}} \xrightarrow[n\to\infty]{} &\int_\T G(u) \Big(  \frac{\bar\tau_+^2}{2} \; u +  (T_+-T_-)u+T_- \Big)\; \dd u \\
&=\int_\T G(u) \Big( \frac{\bar\tau_+^2}{2} u(1-u)
  +(T_+-T_-)u+T_-+\frac{(\bar\tau_+ u)^2}{2} \Big) \;\dd u.
\end{align*}
Recalling \eqref{eq:sspp1} and \eqref{eq:3} we conclude that the right
hand side equals
$$
 \int_\T G(u) \Big( e_{\rm th,ss}(u)+\frac{r_{\rm ss}^2(u)}{2} \Big)
 \;\dd u.
$$
 \mm{Thus   \eqref{011403-19c} follows. From Theorem
\ref{theo:elong} we immediately conclude 
\eqref{011403-19a}. The convergence of the stationary microscopic
thermal energy profile in \eqref{011403-19b} is an immediate consequence of these two statements. }
\end{proof}

\section{Moment bounds under the stationary state}
\label{sec:moment}
In this section we present a complete proof of Proposition
  \ref{prop:limitsE} (see Section \ref{sec:proofprop}) and we show Theorem \ref{theo:current}
  (see Proposition \ref{prop:limit-current}).
Before presenting the
proof, we need a few technical estimates on the entropy production (Section \ref{sec:entr-prod-stat}) and second order moments (Section \ref{sec:bounds-second-moment} and Section \ref{sec:bounds-energy}). In the whole section we do not assume $\gamma=1$, since our results hold for any $\gamma >0$.
 
\subsection{Entropy production of the stationary state}
\label{sec:entr-prod-stat}

 Recall definition \eqref{eq:gibbs}. For a given $T$ we will use
  \[\nu_{T} (\dd{\bf r},\dd{\bf p})= g_{T}({\bf r}, {\bf p}) \dd p_0\prod_{x\in\T_n}\dd p_x \dd r_x ,\]
  as a reference measure, 
  and denote its respective expectation by $\llangle \cdot \rrangle_{T}$.

Stationarity of $\mu_{\rm ss}$  under the microscopic dynamics implies that $L^\star \mu_{\rm ss}=0$ 
(in the sense of distributions).  The operator $L^\star$ is hypoelliptic, thus by   \cite[Theorem 1.1, p.~149]{hormander}, the
measure $\mu_{\rm ss}$ has a  smooth density $f_s$  with respect  to  $\nu_{T_+}$, i.e.
\begin{equation*}
  \left\langle F \right\rangle_{ {\rm ss}} = \llangle F f_s \rrangle_{T_+} \; = \int F f_s \; \dd\nu_{T_+} .
\end{equation*}

\begin{proposition}[Entropy production] \label{prop:entropy}
Denote $h:=g_{T_-}/g_{T_+}$.The following formula holds 
\begin{multline}
  \label{eq:8}
  \gamma \sum_{x=0}^{n-1} {\mc D}_x (f_s) + 
 \tilde\gamma T_- \; \Big\llangle   \frac{(\partial_{p_0} (f_s/h))^2}{(f_s/h)} \Big\rrangle_{T_-}+ 
\tilde\gamma T_+ \; \Big\llangle  \frac{(\partial_{p_n} f_s)^2}{f_s} \Big\rrangle_{T_+}\\  = 
  \frac{\bar \tau_+^2  }{T_+(\gamma n+\tilde\gamma)} + \tilde\gamma \left(\frac{1}{T_+}- \frac{1}{T_-}\right) \big( T_- - \langle p_0^2\rangle_{ {\rm ss}} \big),
\end{multline}
where 
\[{\mc D}_x (f_s):=\Big\llangle
  \frac{\left(\cX_{x}f_s\right)^2}{f_s} \Big\rrangle_{T_+}.\]
\end{proposition}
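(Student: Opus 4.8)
The plan is to read \eqref{eq:8} as an entropy-production balance and to obtain it from the stationarity identity $\langle L\psi\rangle_{\rm ss}=0$ applied to $\psi:=-\log f_s$. Since $\mu_{\rm ss}$ has a smooth, strictly positive density $f_s$ with respect to $\nu_{T_+}$ (by hypoellipticity), this $\psi$ is a well-defined smooth function; granting for the moment that it may legitimately be used in the stationarity identity — see the last paragraph — I would insert the decomposition $L=n^2\bigl(A+\tfrac{\gamma}{2}S+\tfrac{\tilde\gamma}{2}\tilde S\bigr)$ from \eqref{eq:generator} and treat the three pieces separately, each time turning $\langle (\text{op})\psi\rangle_{\rm ss}=\llangle (\text{op})(-\log f_s)\, f_s\rrangle_{T_+}$ into a Dirichlet-form-type expression by integration by parts.

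For the Hamiltonian part I would split $A=A_0+\bar\tau_+\partial_{p_n}$, where $A_0$ is the Liouville operator of $\mathcal H_n$. Since $A_0$ is divergence free for the Lebesgue measure and $A_0\mathcal H_n=0$, it leaves $\nu_{T_+}$ invariant and is antisymmetric on $L^2(\nu_{T_+})$; being a derivation, $\langle A_0(-\log f_s)\rangle_{\rm ss}=\llangle(\log f_s)\,A_0 f_s\rrangle_{T_+}=\llangle A_0\bigl(f_s\log f_s-f_s\bigr)\rrangle_{T_+}=0$. For the boundary force, integrating by parts in $p_n$ against $\nu_{T_+}$ gives $\bar\tau_+\langle\partial_{p_n}(-\log f_s)\rangle_{\rm ss}=-\bar\tau_+\llangle\partial_{p_n}f_s\rrangle_{T_+}=-T_+^{-1}\bar\tau_+\langle p_n\rangle_{\rm ss}$, and Proposition \ref{prop:averages} supplies $\langle p_n\rangle_{\rm ss}=\bar\tau_+/(\gamma n+\tilde\gamma)$; this is the source of the term $\bar\tau_+^2/\bigl(T_+(\gamma n+\tilde\gamma)\bigr)$ on the right-hand side of \eqref{eq:8}.

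For the conservative noise $S=\sum_{x=0}^{n-1}\cX_x\circ\cX_x$, each $\cX_x$ generates a rotation in the $(p_x,p_{x+1})$ plane preserving $p_x^2+p_{x+1}^2$, hence is antisymmetric on $L^2(\nu_{T_+})$, so two integrations by parts give $\llangle\bigl(\cX_x\cX_x(-\log f_s)\bigr)f_s\rrangle_{T_+}=\llangle(\cX_x f_s)^2/f_s\rrangle_{T_+}=\mc D_x(f_s)$, and summation over $x$ yields the first term on the left of \eqref{eq:8}. For the thermostats I would write $\tilde S=\tilde S^{(0)}+\tilde S^{(n)}$, with $\tilde S^{(n)}=T_+\partial_{p_n}^2-p_n\partial_{p_n}$ and $\tilde S^{(0)}=T_-\partial_{p_0}^2-p_0\partial_{p_0}$: since $\tilde S^{(n)}$ is symmetric on $L^2(\nu_{T_+})$, the same computation gives $T_+\llangle(\partial_{p_n}f_s)^2/f_s\rrangle_{T_+}$. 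The operator $\tilde S^{(0)}$, however, is symmetric with respect to $\nu_{T_-}=h\,\nu_{T_+}$ rather than $\nu_{T_+}$; the \emph{key device} here is to rewrite $\langle\,\cdot\,\rangle_{\rm ss}=\llangle(f_s/h)\,\cdot\,\rrangle_{T_-}$ and to split $-\log f_s=-\log(f_s/h)-\log h$. Integrating the first summand by parts against $\nu_{T_-}$ produces the ``good'' term $T_-\llangle\bigl(\partial_{p_0}(f_s/h)\bigr)^2/(f_s/h)\rrangle_{T_-}$; for the second, the $p_0$-dependence of $-\log h$ is, by \eqref{eq:gibbs}, the quadratic $\tfrac12\bigl(T_-^{-1}-T_+^{-1}\bigr)p_0^2$ up to an additive constant, so $\tilde S^{(0)}(-\log h)=\bigl(T_-^{-1}-T_+^{-1}\bigr)\bigl(T_--p_0^2\bigr)$, and integrating this against $\mu_{\rm ss}$ gives $\bigl(T_-^{-1}-T_+^{-1}\bigr)\bigl(T_--\langle p_0^2\rangle_{\rm ss}\bigr)=-\bigl(T_+^{-1}-T_-^{-1}\bigr)\bigl(T_--\langle p_0^2\rangle_{\rm ss}\bigr)$.

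Collecting the four contributions in $0=\langle L(-\log f_s)\rangle_{\rm ss}$, dividing out the common factor $n^2$, and transferring the force term and the $\langle p_0^2\rangle_{\rm ss}$ term to the right-hand side, one obtains \eqref{eq:8} after bookkeeping of the numerical constants; as a consistency check, the left-hand side is then manifestly nonnegative, so the right-hand side must be too. The step I expect to be the real obstacle is the rigorous justification of all of the above: $\psi=-\log f_s$ is neither bounded nor compactly supported, so neither the stationarity identity nor the integrations by parts are available off the shelf, and one must control the behaviour of $f_s$, $\log f_s$ and their derivatives at infinity in phase space. I would deal with this by a regularization, applying the identity to $\psi_{\varepsilon,R}:=-\log(f_s+\varepsilon)\,\chi_R$ for a smooth spatial cutoff $\chi_R$ and then letting $\varepsilon\to0$ and $R\to\infty$, using the a priori bounds available for this model — finiteness of $\langle\mathcal H_n\rangle_{\rm ss}$ (cf. \cite{bo1}), Gaussian-type tail control on $f_s$ coming from hypoellipticity, and finiteness of the relative entropy $\llangle f_s\log f_s\rrangle_{T_+}$ — to ensure that all boundary terms vanish and all the integrals converge in the limit.
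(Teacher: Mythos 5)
Your proposal is correct and follows essentially the same route as the paper: the stationarity identity applied to $-\log f_s$, Dirichlet forms for the noise $S$ and the right thermostat computed with respect to $\nu_{T_+}$, the change of reference measure via $h=g_{T_-}/g_{T_+}$ for the left thermostat (which produces the term involving $\bigl(\tfrac{1}{T_+}-\tfrac{1}{T_-}\bigr)\bigl(T_--\langle p_0^2\rangle_{\rm ss}\bigr)$), and the work term obtained from $\langle p_n\rangle_{\rm ss}=\bar\tau_+/(\gamma n+\tilde\gamma)$. The paper likewise treats the integrations by parts as formal for the non-compactly supported $f_s$ and, in place of your regularization sketch, refers to the variational argument of \cite[Section 3]{bo2}.
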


\begin{proof}
Integration by parts yields
$$
\left\llangle  g\tilde Sf
\right\rrangle_{T_+}=T_+\left\llangle \partial_{p_n}f \partial_{p_n}g
\right\rrangle_{T_+} +T_-\left\llangle \partial_{p_0}f \partial_{p_0}g
\right\rrangle_{T_+}
+T_-\left(\frac{1}{T_-}-\frac{1}{T_+}\right)\left\llangle g \partial_{p_0}f  \right\rrangle_{T_+} 
$$
for any $f,g\in C^\infty_0(\Om_n)$, where  $C^\infty_0(\Om_n)$ is the
space of compactly supported smooth functions. As 
$$
A\left(\mathcal{H}_n (\mathbf r, \mathbf p)\right)=\bar\tau_+p_n \quad \mbox{and}\quad {\cX}_x\Big(\sum_{y=0}^np_{y}^2\Big)=0,\qquad x\in \{0,\ldots,n-1\}
$$
we conclude
\begin{align*}
&
 -\left\llangle  gAf  \right\rrangle_{T_+} =\left\llangle   f Ag \right\rrangle_{T_+}+\bar\tau_+\left\llangle p_n f g \right\rrangle_{T_+} \vphantom{\Big(}
,\\
&
 -\left\llangle  g {\cX}_x^2f \right\rrangle_{T_+} =\left\llangle   {\cX}_xf {\cX}_xg \right\rrangle_{T_+} ,  \vphantom{\Big(}
\end{align*}
for any $x=0,\ldots,n-1,$ and any $f,g\in C^\infty_0(\Om_n)$.
 We take the average of  $-n^{-2}L (\log f_s) $ with respect
  to the stationary state $\mu_{\rm ss}$. Taking into account the  above 
  identities  we obtain
\begin{align*}
    0 &=  - n^{-2} \left\langle  L  \log f_s \right\rangle_{ {\rm
        ss}} = -n^{-2} \left\llangle  f_s  L  \log f_s
        \right\rrangle_{T_+}  \vphantom{\bigg(} \notag \\
    &=\gamma \sum_{x=0}^{n-1} \mc D_x(f_s) + 
    \tilde\gamma T_+\; \Big\llangle   \frac{(\partial_{p_n} f_s)^2}{f_s} \Big\rrangle_{T_+} 
    - \bar\tau_+ \left\llangle  \partial_{p_n} f_s \right\rrangle_{T_+} - \tilde\gamma \left\langle \big(T_- \partial_{p_0}^2 - p_0\partial_{p_0}\big) \log f_s \right\rangle_{ {\rm ss}}. 
\end{align*}
From the definition $h=g_{T_-}/g_{T_+}$,  the last term can be rewritten in
the form: 
\begin{align*}
- \left\langle \big(T_- \partial_{p_0}^2 - p_0\partial_{p_0}\big) \log f_s \right\rangle_{ {\rm ss}} & = - \int \frac{f_s}{h} \big(T_- \partial_{p_0}^2 - p_0\partial_{p_0}\big) \Big( \log \Big(\frac{f_s}{h}\Big)\Big) \; g_{T_-} dp_0\prod_{x=1}^ndp_x dr_x \\ & \quad - \int f_s   \big(T_- \partial_{p_0}^2 - p_0\partial_{p_0}\big)\big(\log h\big) \; g_{T_+} dp_0\prod_{x=1}^ndp_x dr_x \\
& = T_-\;  \Big\llangle   \frac{(\partial_{p_0} (f_s/h))^2}{(f_s/h)} \Big\rrangle_{T_-} + \left(\frac{1}{T_-}- \frac{1}{T_+}\right) \big( T_- - \langle p_0^2\rangle_{ {\rm ss}} \big). 
\end{align*}
Moreover, by integration by parts and \eqref{eq:statvel}, we obtain
\begin{equation*}
  \left\llangle  \partial_{p_n} f_s \right\rrangle_{T_+} = T_+^{-1} \left\llangle  p_n f_s \right\rrangle_{T_+} = \frac{\bar p_s}{T_+}  
= \frac{\bar \tau_+ }{T_+(\gamma n + \tilde\gamma)}
\end{equation*}
and \eqref{eq:8} follows. Since $f_s$ needs not be compactly supported
the above calculation is somewhat formal. A rigorous argument (using
variational principles) can be found in \cite[Section 3]{bo2}.
\end{proof}

\subsection{Bounds on second moments}
\label{sec:bounds-second-moment}

In the present section we obtain some bounds on the covariance
functions of momenta and positions,  with respect to the stationary
states. In particular, we estimate the magnitude of
the average current $|\bar j_s|$, see
\eqref{eq:jxs}  and investigate the behaviour of $\phi(1)$
and $\phi(n)$ as $n\to\infty$, see \eqref{eq:phi1}. 

Let us first state a rough estimate on the second moments at the boundaries, which is going to be refined further.

\begin{proposition}[Second moments  {at} the boundaries:
 Part I]\label{prop:second-bnd}
The following equality holds
\begin{equation}
  \label{eq:10}
 \langle p_0^2 \rangle_{ {\rm ss}} + \langle p_n^2 \rangle_{ {\rm
     ss}} = T_+ + T_- + \frac{2\bar \tau_+ \bar
   p_s}{\tilde\gamma}\qquad\mbox{for all }n\ge 1.
\end{equation}
Moreover, there exists a constant
$C=C(\gamma,\tilde\gamma,\bar\tau_+,T_+,T_-)>0$, such that
\begin{equation}
  \label{eq:23}
   \langle r_1^2\rangle_{ {\rm ss}}  +  \langle
   r_n^2\rangle_{ {\rm ss}} \le C\qquad \mbox{for all }n\ge 1.
\end{equation}
\end{proposition}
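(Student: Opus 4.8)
The plan is to derive equality \eqref{eq:10} directly from stationarity applied to the observable $p_0^2 + p_n^2$, and then to obtain the bound \eqref{eq:23} by combining the entropy production estimate of Proposition \ref{prop:entropy} with the algebra of the harmonic function $\phi$ from Proposition \ref{prop:current}.

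For \eqref{eq:10}, I would compute $\langle L(p_0^2+p_n^2)\rangle_{\rm ss}=0$. Using the generator \eqref{eq:generator}, only the boundary pieces of $A$, the term $\frac\gamma2 S$ acting through $\cX_0$ and $\cX_{n-1}$, and $\frac{\tilde\gamma}2\tilde S$ contribute. A direct calculation gives $n^{-2}L(p_0^2) = 2 r_1 p_0 - (\gamma+\tilde\gamma) p_0^2 + \gamma p_1^2 + \tilde\gamma T_-$ (the $\gamma p_1^2$ coming from $\cX_0\circ\cX_0$) and similarly $n^{-2}L(p_n^2) = -2 r_n p_n + 2\bar\tau_+ p_n - (\gamma+\tilde\gamma)p_n^2 + \gamma p_{n-1}^2 + \tilde\gamma T_+$. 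Adding these and using that the interior bulk sum telescopes — more precisely, that $\langle L(\sum_{x} p_x^2)\rangle_{\rm ss}=0$ forces the bulk kinetic exchange terms to cancel, leaving only the boundary friction/injection terms — one is left with an identity relating $\langle p_0^2\rangle_{\rm ss}+\langle p_n^2\rangle_{\rm ss}$, $T_\pm$, and $\langle r_1 p_0\rangle_{\rm ss}-\langle r_n p_n\rangle_{\rm ss}+\bar\tau_+\langle p_n\rangle_{\rm ss}$. The latter cross terms are evaluated using $\langle L(r_1 p_0)\rangle_{\rm ss}=0$ and $\langle L(r_n p_n)\rangle_{\rm ss}=0$ together with Proposition \ref{prop:averages}; in fact the energy-current stationarity \eqref{eq:jxs} with the boundary currents \eqref{eq:current-bound} already gives $\langle p_0 r_1\rangle_{\rm ss} = \tfrac{\tilde\gamma}2(T_- - \langle p_0^2\rangle_{\rm ss}) + \bar j_s$ and a symmetric relation at the right end, and summing the two boundary current identities produces $\tfrac{\tilde\gamma}{2}(T_-+T_+-\langle p_0^2\rangle_{\rm ss}-\langle p_n^2\rangle_{\rm ss}) + \bar\tau_+\bar p_s = 0$, which rearranges to \eqref{eq:10}.

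For the bound \eqref{eq:23}, the plan is to control $\langle r_1^2\rangle_{\rm ss}$ and $\langle r_n^2\rangle_{\rm ss}$ via $\phi(1)$ and $\phi(n)$. Since $\phi$ is harmonic on $\{2,\dots,n-1\}$ and has constant increment $\bar j_s$ by \eqref{eq:gradientj}, we have $\phi(x)=\phi(1)+(x-1)\bar j_s$; evaluating $\phi$ near the boundaries via \eqref{eq:phi1} expresses $\langle r_1^2\rangle_{\rm ss}$ and $\langle r_n^2\rangle_{\rm ss}$ in terms of $\phi(1),\phi(n)$, the already-bounded quantities $\langle p_0^2\rangle_{\rm ss}+\langle p_n^2\rangle_{\rm ss}$ from \eqref{eq:10}, and the momentum correlations $\langle p_0 p_1\rangle_{\rm ss}$, $\langle p_{n-1}p_n\rangle_{\rm ss}$. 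The latter are $O(1)$ by Cauchy--Schwarz once we know $\langle p_0^2\rangle_{\rm ss}$ and $\langle p_n^2\rangle_{\rm ss}$ are $O(1)$ individually; this positivity/boundedness of the boundary kinetic energies is exactly where Proposition \ref{prop:entropy} enters — the entropy production identity \eqref{eq:8} has a nonnegative left-hand side and a right-hand side of order $n^{-1}$ plus a term proportional to $T_- - \langle p_0^2\rangle_{\rm ss}$, which pins $\langle p_0^2\rangle_{\rm ss}$ to within $O(1)$ of $T_-$, and then \eqref{eq:10} pins $\langle p_n^2\rangle_{\rm ss}$ as well (noting $\bar p_s = O(1/n)$, so the correction term is uniformly bounded). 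Finally one needs $\phi(1),\phi(n) = O(1)$: here $\bar j_s = \nabla\phi$ together with the telescoping identity $\sum_{x=1}^{n-1}\nabla\phi(x) = \phi(n)-\phi(1) = (n-1)\bar j_s$, combined with an independent $O(1/n)$ bound on $|\bar j_s|$ (obtained by summing $\phi$ against a test function, or from the second-moment a priori control), keeps $\phi(n)-\phi(1)$ bounded, and a separate direct estimate of, say, $\phi(1)$ via \eqref{eq:phi1} and the boundary moment bounds closes the loop.

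The main obstacle I anticipate is the last point: obtaining a uniform-in-$n$ bound on $\phi$ at the boundary sites, equivalently on $\langle r_1^2\rangle_{\rm ss}$ and $\langle r_n^2\rangle_{\rm ss}$ themselves, without circularity. One cannot simply read these off from \eqref{eq:8}, which only controls kinetic-type Dirichlet forms and momentum moments, not elongations. The resolution I would pursue is to apply stationarity to carefully chosen quadratic observables coupling $r$ and $p$ at the boundary — e.g. $\langle L(r_1 p_0)\rangle_{\rm ss}=0$ and $\langle L(r_1^2)\rangle_{\rm ss}=0$ — which, after using \eqref{eq:statelo}, \eqref{eq:statvel} and the already-established $O(1)$ control of $\langle p_0^2\rangle_{\rm ss}$, yield a closed linear relation bounding $\langle r_1^2\rangle_{\rm ss}$ by the boundary momentum moments plus $O(1)$; the symmetric computation at site $n$ handles $\langle r_n^2\rangle_{\rm ss}$, with the extra forcing term $\bar\tau_+$ contributing only a bounded amount. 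This is the kind of "rough estimate, to be refined" that the statement advertises, so modest slack in the constant $C$ is acceptable.
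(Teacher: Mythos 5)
Your derivation of \eqref{eq:10} is exactly the paper's: both boundary currents in \eqref{eq:current-bound} have stationary average $\bar j_s$ by \eqref{eq:jxs}, and equating/adding the two identities gives the formula. However, most of the scaffolding you propose for \eqref{eq:23} is circular with respect to the paper's logical order, and one step is incorrect. The $O(1/n)$ bound on $\bar j_s$ (Proposition \ref{prop:current-stationary}) and the refined entropy bound are proved \emph{using} Proposition \ref{prop:second-bnd} and Corollary \ref{cor:bound}, so you cannot invoke them here; and the entropy identity \eqref{eq:8} does not ``pin $\langle p_0^2\rangle_{\rm ss}$ to within $O(1)$ of $T_-$'': its left-hand side is merely nonnegative, so it yields only a one-sided inequality on $T_--\langle p_0^2\rangle_{\rm ss}$, whose useful direction depends on the sign of $T_--T_+$ (and it gives nothing when $T_-=T_+$). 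None of this is needed: \eqref{eq:10}, together with $p_0^2,p_n^2\ge 0$ and $\bar p_s=O(1/n)$, already gives $\langle p_0^2\rangle_{\rm ss},\langle p_n^2\rangle_{\rm ss}=O(1)$, which is all the paper uses.

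The genuine gap is in your final ``resolution''. Stationarity of $p_0r_1$ gives $\langle r_1^2\rangle_{\rm ss}=\langle p_0^2\rangle_{\rm ss}-\langle p_0p_1\rangle_{\rm ss}+\tfrac12(\gamma+\tilde\gamma)\langle p_0r_1\rangle_{\rm ss}$; the last term can indeed be absorbed into the left side by Young's inequality, but $\langle p_0p_1\rangle_{\rm ss}$ brings in $\langle p_1^2\rangle_{\rm ss}$, which is \emph{not} among the quantities you have bounded at this stage (and $\langle Lr_1^2\rangle_{\rm ss}=0$ only yields $\langle p_1r_1\rangle_{\rm ss}=\langle p_0r_1\rangle_{\rm ss}$, which does not help). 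This is precisely the delicate point in the paper: one substitutes $\langle p_0r_1\rangle_{\rm ss}=-\bar j_s+\tfrac\gamma2\big(\langle p_0^2\rangle_{\rm ss}-\langle p_1^2\rangle_{\rm ss}\big)$ using \eqref{eq:current} and $\bar j_s=\tfrac{\tilde\gamma}2\big(T_--\langle p_0^2\rangle_{\rm ss}\big)$, and then chooses the Young constant $A=\tfrac\gamma2(\gamma+\tilde\gamma)$ so that the $\langle p_1^2\rangle_{\rm ss}$ contributions cancel exactly, leading to \eqref{eq:r1-2}; alternatively you must couple in $\langle Lp_0^2\rangle_{\rm ss}=0$ to express $\gamma\langle p_1^2\rangle_{\rm ss}$ through $\langle p_0^2\rangle_{\rm ss}$ and $\langle r_1p_0\rangle_{\rm ss}$ and run a two-unknown absorption argument with small Young parameters. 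The same issue occurs at the right end, where $\gamma\langle p_{n-1}^2\rangle_{\rm ss}$ must first be eliminated via $\langle Lp_n^2\rangle_{\rm ss}=0$ (i.e.\ \eqref{eq:pn-1}) before the final absorption of $\langle p_nr_n\rangle_{\rm ss}$ into $\tfrac12\langle r_n^2\rangle_{\rm ss}$, with $\bar\tau_+\langle r_n\rangle_{\rm ss}$ and $\bar\tau_+\langle p_n\rangle_{\rm ss}$ bounded by Proposition \ref{prop:averages}. As written, your ``closed linear relation bounding $\langle r_1^2\rangle_{\rm ss}$ by the boundary momentum moments plus $O(1)$'' is not closed, because those moments $\langle p_1^2\rangle_{\rm ss},\langle p_{n-1}^2\rangle_{\rm ss}$ are only bounded later (Corollary \ref{cor:bound}), using the very estimate \eqref{eq:23} you are trying to prove.
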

\begin{remark}
{\em {By convention},  the constants appearing in the statements below depend only on the
parameters indicated in parentheses in the statement of the proposition.}
\end{remark}
\begin{proof}[Proof of Proposition \ref{prop:second-bnd}] 
The first identity \eqref{eq:10} is an easy consequence of
\eqref{eq:current-bound} and \eqref{eq:jxs}, which  yields
\begin{align}
\bar j_s & =   \frac{\tilde\gamma}{2}  \left(T_- - \langle p_0^2\rangle_{ {\rm ss}} \right),\label{eq:momentrel2}\\
     \bar j_s &= - \bar\tau_+ \bar p_s- \frac{\tilde\gamma}{2}  \left(T_+- \langle p_n^2\rangle_{ {\rm ss}} \right). \label{eq:momentrel3}
  \end{align}
 Identity \eqref{eq:10} is obtained by adding sideways the above
equalities.  
To show estimate \eqref{eq:23} note that
\begin{align}
    n^{-2}L(p_0 r_1) &= (p_1-p_0)p_0+  r_1^2 - \frac12(\tilde\gamma +\gamma)
    p_0r_1 \label{eq:a}\\
     n^{-2} L(p_n r_n) &= p_n(p_n - p_{n-1}) + (\bar \tau_+-r_n) r_n - \frac12(\tilde\gamma +\gamma)
    p_n r_n .\label{eq:b}
  \end{align}
After taking the
average with respect to the stationary state from \eqref{eq:a} we conclude 
\begin{align}
\label{022109}
    \langle r_1^2\rangle_{ {\rm ss}} &= \langle p_0^2\rangle_{ {\rm ss}}-\langle p_1p_0
                             \rangle_{ {\rm ss}}+\frac12(\tilde\gamma +\gamma)
                        \langle p_0r_1\rangle_{ {\rm ss}}.
 \end{align}   
Recalling the definition of the current \eqref{eq:current} and
then invoking \eqref{eq:momentrel2}, we get
\begin{align*}
   \langle r_1^2\rangle_{ {\rm ss}}   &=\langle p_0^2\rangle_{ {\rm ss}}-\langle p_1p_0 \rangle_{ {\rm ss}}-\frac12(\tilde\gamma +
    \gamma)\Big(\langle j_{0,1}\rangle_{ {\rm ss}} + \frac{\gamma}{2} \big(\langle p_1^2 \rangle_{ {\rm ss}} - \langle p_0^2 \rangle_{ {\rm ss}}\big) \Big) \\ 
    & =\langle p_0^2\rangle_{ {\rm ss}}-\langle p_1p_0 \rangle_{ {\rm ss}}- \frac{\tilde \gamma}{4} (\tilde\gamma +
    \gamma)\big( {T_-}-\langle p_0^2 \rangle_{ {\rm ss}}\big)-\frac{\gamma}{4}(\tilde\gamma+\gamma) \big(\langle p_1^2 \rangle_{ {\rm ss}} - \langle p_0^2 \rangle_{ {\rm ss}}\big)  .   
 \notag
\end{align*}
Using Young's inequality 
\[ \big| \langle p_1p_0 \rangle_{ {\rm ss}} \big| \le \frac{A}{2} \langle p_1^2 \rangle_{ {\rm ss}} + \frac{1}{2A} \langle p_0^2 \rangle_{ {\rm ss}}, \]
with $A = \frac\gamma 2 (\gamma + \tilde \gamma)$, 
we get 
  \begin{equation}
  \langle r_1^2 \rangle_{ {\rm ss}} \le \Big( \frac{1}{\gamma(\gamma+\tilde \gamma)} + 1 + \frac14(\gamma+\tilde\gamma)^2 \Big) \langle p_0^2 \rangle_{ {\rm ss}}.\label{eq:r1-2}
\end{equation}
From \eqref{eq:10} we conclude that $\langle r_1^2 \rangle_{ {\rm ss}}$ is bounded.

To estimate $\langle r_n^2 \rangle_{ {\rm ss}}$, note that from \eqref{eq:b} we write
\begin{equation} \label{eq:rn} \langle r_n^2\rangle_{ {\rm ss}} = \langle p_n^2\rangle_{ {\rm ss}} -  \langle p_n p_{n-1}\rangle_{ {\rm ss}} + \bar \tau_+ \langle r_n\rangle_{ {\rm ss}} -  \frac12(\tilde\gamma +\gamma) \langle p_nr_n\rangle_{ {\rm ss}}.\end{equation}
We use again Young's inequality 
\[
\big| \langle p_np_{n-1} \rangle_{ {\rm ss}} \big| \le \frac{A}{2} \langle p_n^2
\rangle_{ {\rm ss}} + \frac{1}{2A} \langle p_{n-1}^2 \rangle_{ {\rm ss}} ,
\] 
with $A= 1/(2\gamma)$ and we get
\begin{equation} \label{eq:pop}\langle r_n^2 \rangle_{ {\rm ss}} \le   \Big(1+\frac{1}{4\gamma}\Big) \langle p_n^2 \rangle_{ {\rm ss}} +  \gamma \langle p_{n-1}^2 \rangle_{ {\rm ss}} + \tau \langle r_n \rangle_{ {\rm ss}} - \frac12(\tilde\gamma +  \gamma) \langle p_n r_n\rangle_{ {\rm ss}}. \end{equation}
To replace $\langle p_{n-1}^2 \rangle_{ {\rm ss}}$, note that 
\begin{equation} \label{eq:p2-p2}
n^{-2}L(p_n^2) = 2 (\bar\tau_+ - r_n) p_n + \gamma (p_{n-1}^2 - p_n^2)
+  \tilde\gamma (T_+-p_n^2). 
\end{equation}
Taking the average with respect to the stationary state, we obtain:
\begin{equation}\label{eq:pn-1} \gamma\langle p_{n-1}^2 \rangle_{ {\rm ss}} =2 \langle r_np_n\rangle_{ {\rm ss}} - 2 \bar\tau_+ \langle p_n \rangle_{ {\rm ss}} +  (\gamma+\tilde\gamma) \langle p_n^2 \rangle_{ {\rm ss}} -  \tilde \gamma T_+,  \end{equation}
which, in \eqref{eq:pop}, gives
\[\langle r_n^2 \rangle_{ {\rm ss}} \le   \Big(1+\frac{1}{4\gamma}+\gamma + \tilde\gamma\Big) \langle p_n^2 \rangle_{ {\rm ss}}  +\bar\tau_+\; \big(\langle r_n \rangle_{ {\rm ss}} -2\langle p_n\rangle_{ {\rm ss}}\big)+ \frac12(4-\tilde\gamma - \gamma) \langle p_n r_n\rangle_{ {\rm ss}} -  \tilde\gamma T_+.\]
  Using again Young's inequality 
  \[\big| \langle p_nr_{n} \rangle_{ {\rm ss}} \big| \le \frac{A}{2} \langle
    p_n^2 \rangle_{ {\rm ss}} + \frac{1}{2A} \langle r_{n}^2 \rangle_{ {\rm ss}} \] with
  $A=\frac12|4-\gamma-\tilde\gamma|$, we finally arrive at 
   \begin{equation}\label{eq:rn-2}
  \frac12 \langle r_n^2 \rangle_{ {\rm ss}} \le   \Big(1+\frac{1}{4\gamma}+\gamma + \tilde\gamma+\frac14(4-\gamma-\tilde\gamma)^2\Big) \langle p_n^2 \rangle_{ {\rm ss}}  +\bar\tau_+\; \big(\langle r_n \rangle_{ {\rm ss}} -2\langle p_n\rangle_{ {\rm ss}}\big) -  \tilde\gamma T_+.
\end{equation}
We now invoke \eqref{eq:10}, \eqref{eq:statvel} and \eqref{eq:statelo}
to conclude the bound on $\langle r_n^2\rangle_{ {\rm ss}}$, which combined with the
already obtained bound on $\langle r_1^2 \rangle_{ {\rm ss}}$ yields  \eqref{eq:23}.
\end{proof}

\begin{corollary}[Second moments at the boundaries: Part II]\label{cor:bound}
There exists $C=C(\gamma,\tilde\gamma,\bar\tau_+,T_+,T_-)>0$, such that 
\begin{equation}
\label{p1andpn-1} 
\langle p_1^2 \rangle_{ {\rm ss}} + \langle p_{n-1}^2 \rangle_{ {\rm ss}} \le C\qquad \mbox{for all }n\ge 1.
\end{equation}
\end{corollary}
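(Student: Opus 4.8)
The plan is to estimate $\langle p_1^2\rangle_{\rm ss}$ and $\langle p_{n-1}^2\rangle_{\rm ss}$ separately, each time applying the stationarity identity $\langle LF\rangle_{\rm ss}=0$ to a quadratic observable localized at the corresponding boundary ($F=p_0^2$ for the left end, $F=p_n^2$ for the right end), and then reducing everything to quantities already controlled in Proposition \ref{prop:second-bnd}.

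First I would record that, since $\langle p_0^2\rangle_{\rm ss}\ge 0$ and $\langle p_n^2\rangle_{\rm ss}\ge 0$, the sum rule \eqref{eq:10} together with the trivial bound $|\bar p_s|=|\bar\tau_+|/(\gamma n+\tilde\gamma)\le|\bar\tau_+|/\tilde\gamma$ yields
\[
\max\big\{\langle p_0^2\rangle_{\rm ss},\ \langle p_n^2\rangle_{\rm ss}\big\}\ \le\ T_++T_-+\frac{2\bar\tau_+^2}{\tilde\gamma^2}\ =:\ C_0,
\]
uniformly in $n$. Combined with \eqref{eq:23} and the Cauchy--Schwarz inequality (legitimate because for each fixed $n$ all these second moments are finite, as $\langle\mathcal{H}_n\rangle_{\rm ss}<\infty$ was recalled after \eqref{eq:gibbs}), this gives $|\langle p_0 r_1\rangle_{\rm ss}|\le\sqrt{C_0\,C}$ and $|\langle p_n r_n\rangle_{\rm ss}|\le\sqrt{C_0\,C}$, again uniformly in $n$.

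Next, for $\langle p_{n-1}^2\rangle_{\rm ss}$ I would simply invoke identity \eqref{eq:pn-1}, already derived from $\langle L(p_n^2)\rangle_{\rm ss}=0$:
\[
\gamma\langle p_{n-1}^2\rangle_{\rm ss}=2\langle r_n p_n\rangle_{\rm ss}-2\bar\tau_+\langle p_n\rangle_{\rm ss}+(\gamma+\tilde\gamma)\langle p_n^2\rangle_{\rm ss}-\tilde\gamma T_+,
\]
whose right-hand side is bounded uniformly in $n$ by the previous paragraph and \eqref{eq:statvel}. For $\langle p_1^2\rangle_{\rm ss}$ I would run the symmetric computation at the left end: from the boundary equation \eqref{eq:rbd} one gets
\[
n^{-2}L(p_0^2)=2 p_0 r_1+\gamma(p_1^2-p_0^2)+\tilde\gamma(T_--p_0^2),
\]
and taking the stationary average yields
\[
\gamma\langle p_1^2\rangle_{\rm ss}=-2\langle p_0 r_1\rangle_{\rm ss}+(\gamma+\tilde\gamma)\langle p_0^2\rangle_{\rm ss}-\tilde\gamma T_-,
\]
whose right-hand side is again bounded uniformly in $n$. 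Summing the two estimates gives \eqref{p1andpn-1}.

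I do not expect a genuine obstacle here: the argument is entirely routine once Proposition \ref{prop:second-bnd} is in hand. The only points deserving a word of care are (i) the passage from the sum bound \eqref{eq:10} to individual uniform bounds on $\langle p_0^2\rangle_{\rm ss}$ and $\langle p_n^2\rangle_{\rm ss}$, which is immediate from nonnegativity, and (ii) the justification of the Cauchy--Schwarz step and of the identity $\langle L(p_0^2)\rangle_{\rm ss}=0$, which is legitimate because for each fixed $n$ the stationary state has finite energy, hence finite second moments, so no integrability issue arises in applying $L$ to $p_0^2$ and in bounding the mixed moment $\langle p_0 r_1\rangle_{\rm ss}$.
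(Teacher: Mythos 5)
Your proposal is correct and follows essentially the same route as the paper: it takes the stationary average of $n^{-2}L(p_0^2)$ and $n^{-2}L(p_n^2)$ (the latter being exactly \eqref{eq:pn-1}), bounds the mixed terms $\langle p_0 r_1\rangle_{\rm ss}$ and $\langle p_n r_n\rangle_{\rm ss}$ via Proposition \ref{prop:second-bnd}, and uses the nonnegativity of the momenta second moments together with \eqref{eq:10}. The only cosmetic difference is that you bound the cross terms by Cauchy--Schwarz while the paper uses the elementary inequality $|\langle ab\rangle_{\rm ss}|\le\frac12(\langle a^2\rangle_{\rm ss}+\langle b^2\rangle_{\rm ss})$, which is equivalent for this purpose.
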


\begin{proof}

To bound  $\langle p_{n-1}^2 \rangle_{ {\rm ss}}$ we use formula \eqref{eq:pn-1}.
 From an elementary inequality $|\langle r_n p_n
 \rangle_{ {\rm ss}} |\le \frac12(\langle p_n^2 \rangle_{ {\rm ss}} + \langle r_n^2 \rangle_{ {\rm ss}})$
 and Proposition \ref{prop:second-bnd}, we easily conclude that
 $\langle p_{n-1}^2\rangle_{ {\rm ss}}$ is bounded.

The  bound for $\langle
 p_1^2 \rangle_{ {\rm ss}}$ is obtained similarly. First, note that 
\begin{equation}\label{eq:p1} n^{-2} L(p_0^2) = 2 r_1 p_0 +  \gamma
  (p_1^2-p_0^2) + \tilde\gamma (T_--p_0^2).
\end{equation}
 Taking the average with respect to the stationary state, using the
 inequality $|\langle r_1 p_0 \rangle_{ {\rm ss}} |\le \frac12(\langle r_1^2 \rangle_{ {\rm ss}} +
 \langle p_0^2\rangle_{ {\rm ss}})$, and invoking Proposition
 \ref{prop:second-bnd}, we conclude the desired bound on $\langle
 p_1^2 \rangle_{ {\rm ss}}$. Thus \eqref{p1andpn-1} follows.
\end{proof}

In the next proposition we provide a bound on the energy current
under the stationary state, which will be   further refined in Proposition \ref{prop:limit-current}.

\begin{proposition}[The stationary current: Part I]
\label{prop:current-stationary}
There exists a constant $C=C(\gamma,\tilde\gamma,\bar\tau_+,T_+,T_-)
>0$, such that the stationary current satisfies
\begin{equation}  \big|\bar j_s\big| \leqslant\frac C n\qquad \mbox{for all }n\ge 1.\label{eq:vanishj}\end{equation}
\end{proposition}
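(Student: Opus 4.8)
The plan is to exploit the gradient representation of the stationary current obtained in Proposition~\ref{prop:current}, together with the boundary second--moment bounds already established. By \eqref{eq:gradientj} we have $\bar j_s=\phi(x+1)-\phi(x)$ for every $x\in\{1,\dots,n-1\}$; summing this telescoping relation over $x$ from $1$ to $n-1$ yields, for $n\ge2$, the identity $(n-1)\,\bar j_s=\phi(n)-\phi(1)$. Hence it suffices to prove that $|\phi(1)|$ and $|\phi(n)|$ are bounded by a constant depending only on $\gamma,\tilde\gamma,\bar\tau_+,T_+,T_-$ and not on $n$, and then to divide by $n-1$.

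To do that I would simply read off $\phi(1)$ and $\phi(n)$ from the explicit formula \eqref{eq:phi1}: $\phi(1)$ is a fixed linear combination of $\langle r_1^2\rangle_{\rm ss}$, $\langle p_0p_1\rangle_{\rm ss}$, $\langle p_0^2\rangle_{\rm ss}$ and $\langle p_1^2\rangle_{\rm ss}$, and symmetrically $\phi(n)$ is one of $\langle r_n^2\rangle_{\rm ss}$, $\langle p_{n-1}p_n\rangle_{\rm ss}$, $\langle p_n^2\rangle_{\rm ss}$ and $\langle p_{n-1}^2\rangle_{\rm ss}$. Now $\langle p_0^2\rangle_{\rm ss}$ and $\langle p_n^2\rangle_{\rm ss}$ are each bounded by the right-hand side of \eqref{eq:10} (using non-negativity of the other term and $n\ge1$); $\langle r_1^2\rangle_{\rm ss}$ and $\langle r_n^2\rangle_{\rm ss}$ are bounded by \eqref{eq:23}; $\langle p_1^2\rangle_{\rm ss}$ and $\langle p_{n-1}^2\rangle_{\rm ss}$ are bounded by Corollary~\ref{cor:bound}; and the off-diagonal covariances are controlled by Cauchy--Schwarz, $|\langle p_0p_1\rangle_{\rm ss}|\le\langle p_0^2\rangle_{\rm ss}^{1/2}\langle p_1^2\rangle_{\rm ss}^{1/2}$ and likewise at the right end. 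Combining these bounds gives $|\phi(1)|+|\phi(n)|\le C(\gamma,\tilde\gamma,\bar\tau_+,T_+,T_-)$.

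Putting the two steps together, for $n\ge2$ one gets $|\bar j_s|\le \tfrac{2C}{n-1}\le\tfrac{4C}{n}$; the remaining case $n=1$ follows immediately from \eqref{eq:momentrel2}, since there $\bar j_s=\tfrac{\tilde\gamma}{2}\bigl(T_--\langle p_0^2\rangle_{\rm ss}\bigr)$ and $\langle p_0^2\rangle_{\rm ss}$ is bounded by \eqref{eq:10}. Enlarging the constant if necessary then yields \eqref{eq:vanishj} for all $n\ge1$.

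I expect essentially no serious obstacle here: all the analytic input — the fluctuation--dissipation identity behind \eqref{eq:phi1}, the resulting harmonicity of $\phi$, and the uniform boundedness of the boundary second moments — has already been supplied by Proposition~\ref{prop:current}, Proposition~\ref{prop:second-bnd} and Corollary~\ref{cor:bound}. The only mild point of care is the passage from the factor $1/(n-1)$ to the claimed $1/n$ and the degenerate values $n=1,2$, which are absorbed into the constant. Note in particular that, unlike several of the sharper estimates that follow, this preliminary bound does not require the entropy-production identity of Proposition~\ref{prop:entropy}.
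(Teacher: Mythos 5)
Your argument is correct and coincides with the paper's own proof: both sum the gradient identity \eqref{eq:gradientj} to get $(n-1)\bar j_s=\phi(n)-\phi(1)$ and then bound $\phi(1)$, $\phi(n)$ via \eqref{eq:phi1}, the boundary second-moment bounds of Proposition \ref{prop:second-bnd} and Corollary \ref{cor:bound}, and an elementary bound on the cross terms. The only (immaterial) differences are your use of Cauchy--Schwarz in place of the inequality $|\langle p_{x-1}p_x\rangle_{\rm ss}|\le\tfrac12(\langle p_{x-1}^2\rangle_{\rm ss}+\langle p_x^2\rangle_{\rm ss})$ and your explicit treatment of the small-$n$ cases.
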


\begin{proof}
We sum the identity \eqref{eq:gradientj} from $x=1$ to $n-1$ and apply
\eqref{eq:phi1} to express $\phi(n)$ and $\phi(1)$.
In this way we get 
\begin{align} (n-1) \bar j_s & = \phi(n)-\phi(1) \notag \\
& = \Big\langle - \frac{p_{n-1}p_n + r_n^2}{2\gamma} - \frac{\gamma (p_{n-1}^2 +p_n^2)}{4} \Big\rangle_{ {\rm ss}} +  \Big\langle  \frac{p_{1}p_0 + r_1^2}{2\gamma} + \frac{\gamma (p_{1}^2 +p_0^2)}{4} \Big\rangle_{ {\rm ss}}.  \label{eq:njs}
\end{align}
Therefore, \eqref{eq:vanishj} follows from the elementary 
  inequalities
 $$
|\langle p_{x-1}p_x\rangle_{ {\rm ss}} |\le \frac12 \big( \langle p_x^2
\rangle_{ {\rm ss}} + \langle p_{x-1}^2 \rangle_{ {\rm ss}} \big)\quad
\mbox{ for $x=n$ and $x=1$}
$$
together with  the bounds obtained in Proposition \ref{prop:second-bnd} and Corollary \ref{cor:bound}. 
\end{proof}

Proposition \ref{prop:current-stationary} permits to get a better estimate
on the entropy production. Namely, combining  \eqref{eq:8},
\eqref{eq:momentrel2} and \eqref{eq:vanishj} we conclude the following.
\begin{corollary}\label{cor:betterbound}
There exists $C=C(\gamma,\tilde\gamma,\bar\tau_+,T_+,T_-)>0$, such that 
\begin{equation}
 \gamma \sum_{x=0}^{n-1} {\mc D}_x (f_s) + 
 \tilde\gamma T_- \; \Big\llangle   \frac{(\partial_{ p_0} (f_s/h))^2}{(f_s/h)} \Big\rrangle_{T_-} + 
\tilde\gamma T_+ \; \Big\llangle  \frac{(\partial_{p_n} f_s)^2}{f_s}
\Big\rrangle_{T_+} \leq \frac{C}{n},\qquad n\ge1. \label{eq:88}
\end{equation}
\end{corollary}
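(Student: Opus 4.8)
The plan is to read off \eqref{eq:88} directly from the exact entropy-production identity \eqref{eq:8}, whose left-hand side is precisely the quantity we wish to control. The key observation is that each of the three terms on the left of \eqref{eq:8} is a nonnegative Dirichlet-type form: $\gamma\sum_x \mc D_x(f_s)$ with $\mc D_x(f_s)=\langle(\cX_x f_s)^2/f_s\rangle_{T_+}\ge 0$, and similarly $\tilde\gamma T_-\langle(\partial_{p_0}(f_s/h))^2/(f_s/h)\rangle_{T_-}\ge 0$ and $\tilde\gamma T_+\langle(\partial_{p_n}f_s)^2/f_s\rangle_{T_+}\ge 0$ (all prefactors being strictly positive and $f_s>0$). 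Hence it suffices to show that the right-hand side of \eqref{eq:8} is $O(1/n)$; then the sum, and a fortiori each summand, is bounded by the same $O(1/n)$.

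First I would bound the first term on the right of \eqref{eq:8}: since $\gamma n+\tilde\gamma\ge \gamma n$, we have $\bar\tau_+^2/(T_+(\gamma n+\tilde\gamma))\le \bar\tau_+^2/(\gamma T_+ n)$, which is of the form $C(\gamma,\bar\tau_+,T_+)/n$. For the second term, I would invoke the boundary current relation \eqref{eq:momentrel2}, namely $\bar j_s=\tfrac{\tilde\gamma}{2}(T_--\langle p_0^2\rangle_{\rm ss})$, to rewrite $T_--\langle p_0^2\rangle_{\rm ss}=2\bar j_s/\tilde\gamma$. Plugging this in, the second term on the right of \eqref{eq:8} equals $2(1/T_+-1/T_-)\bar j_s$, and by the current estimate \eqref{eq:vanishj} we have $|\bar j_s|\le C/n$, so this term is bounded in absolute value by $C(\tilde\gamma,\bar\tau_+,T_+,T_-)/n$. (When $T_+=T_-$ it vanishes identically, so no separate treatment is needed.)

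Combining the two bounds shows that the right-hand side of \eqref{eq:8} is at most $C'/n$ for a constant $C'=C'(\gamma,\tilde\gamma,\bar\tau_+,T_+,T_-)$; since the left-hand side is a sum of nonnegative terms equal to this quantity, \eqref{eq:88} follows with the stated parameter dependence. I do not expect any genuine obstacle: the argument is a short algebraic combination of \eqref{eq:8}, \eqref{eq:momentrel2} and \eqref{eq:vanishj}, and the only thing to keep an eye on is that every constant introduced along the way depends solely on the model parameters, which is immediate from the explicit expressions above.
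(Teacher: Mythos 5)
Your argument is correct and is precisely the paper's proof: the authors also obtain \eqref{eq:88} by combining the entropy-production identity \eqref{eq:8} with the boundary-current relation \eqref{eq:momentrel2} and the bound $|\bar j_s|\le C/n$ from \eqref{eq:vanishj}, using nonnegativity of the left-hand side terms. Nothing is missing; you have simply written out explicitly the one-line combination the paper invokes.
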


Thanks to Proposition \ref{prop:current-stationary} we are now able to
estimate the covariances of  momenta and stretches at the  boundaries as follows: 

\begin{proposition}[Second moment at the boundaries: Part III] \label{prop:rnpn}
There exists $C=C(\gamma,\tilde\gamma,\bar\tau_+,T_+,T_-)>0$, such
that, at the left boundary point
\begin{equation}  \big|\langle p_0p_1\rangle_{ {\rm ss}}\big|+\big|\langle
  r_1p_1\rangle_{ {\rm ss}}\big|+\big|\langle r_1p_0\rangle_{ {\rm ss}}\big| \leqslant
  \frac{C}{\sqrt n}, \qquad n\ge1, \label{eq:vanishbd1}\end{equation} and at the
right boundary point
  \begin{equation}
\big|\langle p_np_{n-1}\rangle_{ {\rm ss}}\big| +\big|\langle
r_np_{n}\rangle_{ {\rm ss}}\big|+\big|\langle r_np_{n-1}\rangle_{ {\rm ss}}\big| \leqslant
\frac{C}{\sqrt n},\qquad n\ge1. \label{eq:vanishbd2}
\end{equation}
\end{proposition}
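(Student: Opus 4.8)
The plan is to turn the entropy--production bound of Corollary~\ref{cor:betterbound} (equivalently \eqref{eq:88}) into two ``integration by parts'' inequalities. Since each exchange field $\cX_x$ is divergence free and preserves the Gaussian weight $g_{T_+}$, it is antisymmetric on $L^2(\nu_{T_+})$, so for any smooth $\psi$ with $\langle\psi^2\rangle_{\rm ss}<\infty$,
\[
\big|\langle \cX_x\psi\rangle_{\rm ss}\big|=\big|\llangle\psi\,\cX_xf_s\rrangle_{T_+}\big|\le \langle\psi^2\rangle_{\rm ss}^{1/2}\,\big(\mc D_x(f_s)\big)^{1/2}\le C\,\langle\psi^2\rangle_{\rm ss}^{1/2}\,n^{-1/2},
\]
using $\gamma\,\mc D_x(f_s)\le\gamma\sum_y\mc D_y(f_s)\le C/n$. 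Likewise, integrating by parts against $g_{T_+}$ (resp.\ against $g_{T_-}$, working with the density $f_s/h$ of $\mu_{\rm ss}$ relative to $\nu_{T_-}$) and using the two boundary Dirichlet forms in \eqref{eq:88}, I would obtain, for smooth $\psi$,
\[
\big|\langle\partial_{p_n}\psi\rangle_{\rm ss}-T_+^{-1}\langle p_n\psi\rangle_{\rm ss}\big|\le C\,\langle\psi^2\rangle_{\rm ss}^{1/2}\,n^{-1/2},\qquad \big|\langle\partial_{p_0}\psi\rangle_{\rm ss}-T_-^{-1}\langle p_0\psi\rangle_{\rm ss}\big|\le C\,\langle\psi^2\rangle_{\rm ss}^{1/2}\,n^{-1/2}.
\]

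Four of the six bounds then come out at once from the last two inequalities, with test functions that do not depend on the thermostatted momentum. At the right end I would take $\psi=p_{n-1}$ and $\psi=r_n$, both annihilated by $\partial_{p_n}$, which gives $|\langle p_np_{n-1}\rangle_{\rm ss}|\le C\langle p_{n-1}^2\rangle_{\rm ss}^{1/2}n^{-1/2}$ and $|\langle p_nr_n\rangle_{\rm ss}|\le C\langle r_n^2\rangle_{\rm ss}^{1/2}n^{-1/2}$; the right--boundary second moments $\langle p_{n-1}^2\rangle_{\rm ss}$, $\langle r_n^2\rangle_{\rm ss}$ are $O(1)$ by Corollary~\ref{cor:bound} and Proposition~\ref{prop:second-bnd}. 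Symmetrically, $\psi=p_1$ and $\psi=r_1$ at the left end yield $|\langle p_0p_1\rangle_{\rm ss}|,|\langle p_0r_1\rangle_{\rm ss}|\le C/\sqrt n$.

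For the two remaining covariances I would use the first inequality: since $\cX_x$ leaves the stretches untouched, $\cX_0(r_1p_0)=r_1p_1$ and $\cX_{n-1}(r_np_n)=-r_np_{n-1}$, whence $|\langle r_1p_1\rangle_{\rm ss}|\le C\langle r_1^2p_0^2\rangle_{\rm ss}^{1/2}n^{-1/2}$ and $|\langle r_np_{n-1}\rangle_{\rm ss}|\le C\langle r_n^2p_n^2\rangle_{\rm ss}^{1/2}n^{-1/2}$. The whole argument thus reduces to a uniform-in-$n$ bound on the fourth moments $\langle r_1^2p_0^2\rangle_{\rm ss}$ and $\langle r_n^2p_n^2\rangle_{\rm ss}$, and this is where I expect the real difficulty to lie: these quartic moments are \emph{not} controlled by the fluctuation--dissipation relation, so I would need a separate a priori estimate. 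I would get it by applying $L$ to well-chosen quartic observables near each boundary --- for instance $p_0^4$, $r_1^4$, $p_0^2r_1^2$ on the left and the $\bar\tau_+$--corrected $p_n^4$, $r_n^4$, $p_n^2r_n^2$ on the right --- taking stationary averages, and closing the resulting finite system by Young's inequality, using the $O(1)$ second moments already available and the coercivity of the thermostat terms $-p_0\partial_{p_0}$, $-p_n\partial_{p_n}$ to absorb the quartic contributions. An alternative that avoids quartics is to use $\langle L(r_1p_1)\rangle_{\rm ss}=0$ and $\langle L(r_np_{n-1})\rangle_{\rm ss}=0$ to rewrite each of these covariances, up to an $O(1/\sqrt n)$ error, in terms of a single stretch--stretch correlation $\langle r_1r_2\rangle_{\rm ss}$ (resp.\ $\langle r_{n-1}r_n\rangle_{\rm ss}$), which one then controls by a short telescoping argument driven by $|\bar j_s|\le C/n$; I expect the quartic route to be the cleaner one to write down.
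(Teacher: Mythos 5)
Your handling of the four covariances that involve the thermostatted momenta ($\langle p_0p_1\rangle_{\rm ss}$, $\langle p_0r_1\rangle_{\rm ss}$, $\langle p_np_{n-1}\rangle_{\rm ss}$, $\langle p_nr_n\rangle_{\rm ss}$) is correct and is exactly the paper's argument: integrate by parts in $p_0$ against $g_{T_-}$ (working with the density $f_s/h$) and in $p_n$ against $g_{T_+}$, apply Cauchy--Schwarz with the two boundary Dirichlet forms in \eqref{eq:88}, and use the $O(1)$ second moments from Proposition \ref{prop:second-bnd} and Corollary \ref{cor:bound}. The gap lies in the two remaining covariances, $\langle r_1p_1\rangle_{\rm ss}$ and $\langle r_np_{n-1}\rangle_{\rm ss}$. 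Your primary route, $|\langle \cX_x\psi\rangle_{\rm ss}|\le \langle\psi^2\rangle_{\rm ss}^{1/2}\,\mc D_x(f_s)^{1/2}$ with $\psi=r_1p_0$ and $\psi=r_np_n$, is algebraically sound but hinges on uniform-in-$n$ bounds for the quartic moments $\langle r_1^2p_0^2\rangle_{\rm ss}$ and $\langle r_n^2p_n^2\rangle_{\rm ss}$, which are established nowhere (even their finiteness is not recorded, since the paper only quotes $\langle \mathcal H_n\rangle_{\rm ss}<\infty$). The sketched closure is doubtful: applying $L$ to quartic boundary observables such as $p_0^4$, $r_1^4$, $p_0^2r_1^2$ generates, through the transport term and the exchange noise, quartic correlations involving $p_1$, $r_2$ (and symmetrically $p_{n-1}$, $r_{n-1}$), so the system does not close on a fixed finite family of boundary observables, and the thermostat coercivity acts only on $p_0,p_n$; obtaining a uniform fourth-moment bound this way would be a genuinely new estimate, not a routine Young-inequality exercise. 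The alternative you mention is also not a proof: from $\langle L(r_1p_1)\rangle_{\rm ss}=0$ one gets $\gamma\langle r_1p_1\rangle_{\rm ss}=\langle p_1^2\rangle_{\rm ss}-\langle r_1^2\rangle_{\rm ss}+\langle r_1r_2\rangle_{\rm ss}-\langle p_0p_1\rangle_{\rm ss}$, so even granting control of $\langle p_1^2\rangle_{\rm ss}-\langle r_1^2\rangle_{\rm ss}$ (which can indeed be extracted from \eqref{022109}, \eqref{eq:p1}, \eqref{eq:momentrel2}, \eqref{eq:vanishj} and the four bounds already proved), you would still need $\langle r_1r_2\rangle_{\rm ss}=O(n^{-1/2})$, and on the right $\langle r_{n-1}r_n\rangle_{\rm ss}-\bar\tau_+^2=O(n^{-1/2})$. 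In the paper these stretch--stretch correlations are obtained only as qualitative limits and only as \emph{consequences} of the very proposition you are proving, so the hinted ``telescoping from $|\bar j_s|\le C/n$'' would have to be an independent new argument, which you do not supply.

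The paper closes these two cases with one exact identity that your proposal misses: since $n^{-2}L(r_x^2)=2(p_x-p_{x-1})r_x$ (see \eqref{013108-19}), stationarity gives $\langle p_xr_x\rangle_{\rm ss}=\langle p_{x-1}r_x\rangle_{\rm ss}$ for every $x\in\T_n$, i.e.~\eqref{eq:equ}. Taking $x=1$ and $x=n$ reduces $\langle r_1p_1\rangle_{\rm ss}$ to $\langle r_1p_0\rangle_{\rm ss}$ and $\langle r_np_{n-1}\rangle_{\rm ss}$ to $\langle r_np_n\rangle_{\rm ss}$, both of which you have already bounded by $C/\sqrt n$. With this one line your argument is complete, and neither the quartic moments nor the stretch--stretch correlations are needed.
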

\begin{proof} 
Integration by parts yields
$$
\langle p_0 p_1\rangle_{ {\rm ss}}=-T_-\big\llangle p_1 (f_s/g_{T-}) \partial_{p_0}g_{T-}\big\rrangle_{T_+}=T_-\big\llangle p_1 \partial_{p_0} (f_s/h)\big\rrangle_{T_-}.
$$
We use the entropy production bound \eqref{eq:88} and estimate
\eqref{p1andpn-1} on $\langle p_1^2 \rangle_{ {\rm ss}}$, to estimate the right
hand side. As a result we get
\begin{equation*}
   \big| \langle p_0 p_1\rangle_{ {\rm ss}} \big|= T_-\big|\big\llangle p_1 \partial_{p_0} (f_s/h)\big\rrangle_{T_-}\big| \le T_-\big\langle p_1^2\big\rangle_{ {\rm ss}}^{\frac12} \; 
    \Big\llangle  \frac{( \partial_{p_0} (f_s/h))^2}{(f_s/h)}\Big\rrangle_{T_-}^{\frac12} 
    \le \frac{C}{\sqrt n}.
\end{equation*}
Similarly, 
\begin{equation*}
   \big| \langle p_n p_{n-1}\rangle_{ {\rm ss}} \big|=T_+ \big|\big\llangle p_{n-1} \partial_{p_n} f_s\big\rrangle_{T_+}\big| \le T_+\big\langle p_{n-1}^2\big\rangle_{ {\rm ss}}^{\frac12} \; 
    \Big\llangle  \frac{( \partial_{p_n} f_s)^2}{f_s}\Big\rrangle_{T_+}^{\frac12} 
    \le \frac{C}{\sqrt n}.
\end{equation*}
Finally, note that, for any $x \in \T_n$ 
\begin{equation}
\label{013108-19} n^{-2}L(r_x^2) = 2(p_x-p_{x-1})r_x.\end{equation}
 Therefore,  upon averaging with respect to the NESS, we get
\begin{equation}
\label{eq:equ}
\langle p_x r_x \rangle_{ {\rm ss}} = \langle p_{x-1} r_x \rangle_{ {\rm ss}},\quad  x \in \T_n.
\end{equation}
In particular, applying \eqref{eq:equ} for $x=1$ and $x=n$, we remark
that    the only quantities we need to yet estimate
  are  $| \langle r_1 p_0 \rangle_{ {\rm ss}} | $ and $| \langle r_np_n
\rangle_{ {\rm ss}}|$. This is done  using the entropy production bound
\eqref{eq:88} in the same manner as before, namely: 
\[
   \big| \langle r_1 p_0\rangle_{ {\rm ss}} \big|= T_-\big|\big\llangle r_1 \partial_{p_0}( f_s/h)\big\rrangle_{T_-}\big| \le T_-\big\langle r_1^2\big\rangle_{ {\rm ss}}^{\frac12} \; 
    \Big\llangle  \frac{( \partial_{p_0} (f_s/h))^2}{(f_s/h)}\Big\rrangle_{T_-}^{\frac12} 
    \le \frac{C}{\sqrt n},
\]
from \eqref{eq:23} and \eqref{eq:8}. We leave the last estimate for the reader. 
\end{proof}

We now have all the ingredients necessary to prove moments convergences at the boundaries:
\begin{corollary}[Second moments at the boundaries: Part IV]
\label{prop:limit-stationary}
The following limits hold: at the left boundary point,  
\begin{align}
  \langle p_x^2\rangle_{ {\rm ss}} \ &\xrightarrow[n\to\infty]{}\ T_-\qquad  \text{ for } x\in\{0,1\}, \label{eq:limitp}\\  
     \langle r_1^2\rangle_{ {\rm ss}}  \ &\xrightarrow[n\to\infty]{}\ T_- ,\label{eq:limit12}\\
       \langle r_1r_2\rangle _s \ &\xrightarrow[n\to\infty]{}0,  \label{eq:limit13}\end{align}
and at the right boundary point,
       \begin{align}
    \langle p_x^2\rangle_{ {\rm ss}} \ &\xrightarrow[n\to\infty]{}\ T_+\qquad  \text{ for } x\in\{n-1,n\} ,\label{eq:limitp2}\\ 
  \langle r_n^2\rangle_{ {\rm ss}} \ &\xrightarrow[n\to\infty]{}\ T_+ + \bar\tau_+^2, \label{eq:limit11}\\
  \langle r_{n-1} r_n\rangle _s \ &\xrightarrow[n\to\infty]{}\  \bar\tau_+^2.\label{eq:limit14} \end{align}
\end{corollary}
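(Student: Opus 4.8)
The plan is to determine the boundary second moments one site at a time, moving inwards from the two ends, by applying the stationarity identity $\langle LF\rangle_{\rm ss}=0$ to carefully chosen quadratic observables $F$ and reducing each new moment to a combination of quantities already known to converge. All the moments involved are finite for each fixed $n$ since $\langle\mathcal H_n\rangle_{\rm ss}<\infty$, so these manipulations are legitimate. The only inputs beyond elementary algebra are the $O(1/n)$ bound on $\bar j_s$ from Proposition~\ref{prop:current-stationary}, the $O(1/\sqrt n)$ bounds on the boundary covariances from Proposition~\ref{prop:rnpn}, and the exact formulas of Proposition~\ref{prop:averages}.

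First I would treat the momenta at the extreme sites. From \eqref{eq:momentrel2} and \eqref{eq:momentrel3}, $\langle p_0^2\rangle_{\rm ss}=T_--\frac{2}{\tilde\gamma}\bar j_s$ and $\langle p_n^2\rangle_{\rm ss}=T_++\frac{2}{\tilde\gamma}\big(\bar j_s+\bar\tau_+\bar p_s\big)$; since $\bar j_s=O(1/n)$ and $\bar p_s=\bar\tau_+/(\gamma n+\tilde\gamma)\to0$, letting $n\to\infty$ gives \eqref{eq:limitp} for $x=0$ and \eqref{eq:limitp2} for $x=n$. Moving one site inwards, I average \eqref{eq:p1} and \eqref{eq:pn-1} to obtain $\gamma\langle p_1^2\rangle_{\rm ss}=(\gamma+\tilde\gamma)\langle p_0^2\rangle_{\rm ss}-\tilde\gamma T_--2\langle r_1p_0\rangle_{\rm ss}$ and $\gamma\langle p_{n-1}^2\rangle_{\rm ss}=(\gamma+\tilde\gamma)\langle p_n^2\rangle_{\rm ss}-\tilde\gamma T_++2\langle r_np_n\rangle_{\rm ss}-2\bar\tau_+\langle p_n\rangle_{\rm ss}$; passing to the limit using the previous step, $\langle p_n\rangle_{\rm ss}=\bar p_s\to0$, and $\langle r_1p_0\rangle_{\rm ss},\langle r_np_n\rangle_{\rm ss}=O(1/\sqrt n)$ from Proposition~\ref{prop:rnpn}, gives $\langle p_1^2\rangle_{\rm ss}\to T_-$ and $\langle p_{n-1}^2\rangle_{\rm ss}\to T_+$, completing \eqref{eq:limitp} and \eqref{eq:limitp2}. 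The stretch variances at the ends then follow directly from \eqref{022109} and \eqref{eq:rn}: in the limit the covariance terms vanish by Proposition~\ref{prop:rnpn}, $\langle p_0^2\rangle_{\rm ss}\to T_-$, $\langle p_n^2\rangle_{\rm ss}\to T_+$, and $\langle r_n\rangle_{\rm ss}=\bar p_s\big(\gamma n+\frac12(\tilde\gamma-\gamma)\big)\to\bar\tau_+$ by \eqref{eq:statelo}, so that $\langle r_1^2\rangle_{\rm ss}\to T_-$ and $\langle r_n^2\rangle_{\rm ss}\to T_++\bar\tau_+^2$, i.e.~\eqref{eq:limit12} and \eqref{eq:limit11}.

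For the nearest-neighbour stretch correlations \eqref{eq:limit13} and \eqref{eq:limit14} I would bring in two more observables. A direct computation from \eqref{eq:qdynamics}--\eqref{eq:pdyn} (with no It\^o cross-variation term, since $r_1$ and $r_n$ carry no martingale part) gives
\begin{align*}
n^{-2}L(r_1p_1)&=(p_1-p_0)p_1+r_1(r_2-r_1)-\gamma r_1p_1,\\
n^{-2}L(r_np_{n-1})&=(p_n-p_{n-1})p_{n-1}+r_n(r_n-r_{n-1})-\gamma r_np_{n-1}.
\end{align*}
Averaging against $\mu_{\rm ss}$ and solving,
\begin{align*}
\langle r_1r_2\rangle_{\rm ss}&=\langle r_1^2\rangle_{\rm ss}-\langle p_1^2\rangle_{\rm ss}+\langle p_0p_1\rangle_{\rm ss}+\gamma\langle r_1p_1\rangle_{\rm ss},\\
\langle r_{n-1}r_n\rangle_{\rm ss}&=\langle r_n^2\rangle_{\rm ss}-\langle p_{n-1}^2\rangle_{\rm ss}+\langle p_{n-1}p_n\rangle_{\rm ss}-\gamma\langle r_np_{n-1}\rangle_{\rm ss},
\end{align*}
and passing to the limit with the values just established and the $O(1/\sqrt n)$ bounds \eqref{eq:vanishbd1}--\eqref{eq:vanishbd2} gives $\langle r_1r_2\rangle_{\rm ss}\to T_--T_-=0$ and $\langle r_{n-1}r_n\rangle_{\rm ss}\to(T_++\bar\tau_+^2)-T_+=\bar\tau_+^2$.

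I do not expect a genuine obstacle here: the whole argument is a chain of applications of $\langle LF\rangle_{\rm ss}=0$ for quadratic $F$, and the only non-elementary facts used (the current bound of Proposition~\ref{prop:current-stationary} and the entropy-production estimate underlying Proposition~\ref{prop:rnpn}) are already in hand. The only point requiring care is to pick, at each step, an observable $F$ for which the resulting relation expresses the target moment purely in terms of quantities previously shown to converge, so that the induction on sites closes cleanly.
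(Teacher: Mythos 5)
Your proposal is correct and follows essentially the same route as the paper: the endpoint momenta via \eqref{eq:momentrel2}--\eqref{eq:momentrel3} and the current bound, the next-to-boundary momenta via \eqref{eq:p1} and \eqref{eq:pn-1} with the covariance bounds of Proposition \ref{prop:rnpn}, the stretch variances via \eqref{022109} and \eqref{eq:rn}, and the correlations via the stationarity relations for $r_1p_1$ and $r_np_{n-1}$, which are exactly the paper's identities \eqref{eq:r1p1}--\eqref{eq:rnpn}. The algebraic details and the limiting values all check out, so no further comment is needed.
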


\begin{proof}[Proofs of  (\ref{eq:limitp}) and (\ref{eq:limitp2})]

From \eqref{eq:momentrel2} and Proposition
\ref{prop:current-stationary} we get $\langle p_0^2 \rangle_{ {\rm ss}} \to
T_-$. Thanks to \eqref{eq:p1}  {and \eqref{eq:vanishbd1}} we deduce  $\langle p_1^2 \rangle_{ {\rm ss}} \to
T_-$, which in turn proves \eqref{eq:limitp}. A similar argument proves
(\ref{eq:limitp2}).
Indeed, from \eqref{eq:momentrel3} and Proposition
\ref{prop:current-stationary},  we get
$\langle p_n^2 \rangle_{ {\rm ss}} \to T_+$ and from \eqref{eq:pn-1}  {and \eqref{eq:vanishbd2}},  we deduce
$\langle p_{n-1}^2 \rangle_{ {\rm ss}} \to T_+$.

\subsubsection*{Proofs of (\ref{eq:limit12}) and (\ref{eq:limit11})}
The limit  \eqref{eq:limit12} follows directly from \eqref{022109} and
Proposition \ref{prop:rnpn}. From
\eqref{eq:statelo} we conclude that 
$
\langle r_n \rangle_{ {\rm ss}} \to \bar\tau_+.
$
Using then \eqref{eq:rn}  together with Proposition \ref{prop:rnpn} 
we conclude  \eqref{eq:limit11}.

\subsubsection*{Proofs of (\ref{eq:limit13})  {and (\ref{eq:limit14})}}
Note that
\begin{align}
n^{-2} L(r_1p_1) & = (p_1-p_0)p_1 + (r_2 - r_1)r_1 - \gamma r_1 p_1 \label{eq:r1p1}\\
n^{-2} L(r_n p_{n-1})& = (p_n-p_{n-1})p_{n-1} + (r_n-r_{n-1})r_n -  \gamma r_np_{n-1}. \label{eq:rnpn}
\end{align}
Taking the average with respect to the stationary state, and using
Proposition \ref{prop:rnpn} together with \eqref{eq:limitp} proved
above, we get 
\begin{equation}
\langle r_1^2\rangle_{ {\rm ss}} - \langle r_1 r_2\rangle _s \xrightarrow[n\to\infty]{}\ T_- \label{eq:r1r2}
\end{equation}
and
\begin{equation}
\langle r_n^2\rangle_{ {\rm ss}} - \langle r_{n-1} r_n\rangle _s \xrightarrow[n\to\infty]{}\ T_+. \label{eq:rnrn-1}
\end{equation}
Using the already proved limits (\ref{eq:limit12})  and (\ref{eq:limit11}) we conclude
\eqref{eq:limit13} and \eqref{eq:limit14}. 
\end{proof}

\begin{proposition}[The stationary current: Part II] \label{prop:limit-current}
The following limits hold:
\begin{align}
\label{031403-19}
\phi(1) & \xrightarrow[n\to\infty]{} -\frac{1}{2}(\gamma^{-1}+\gamma)T_-\\
\phi(n) & \xrightarrow[n\to\infty]{}  -\frac{1}{2}(\gamma^{-1}+\gamma)T_+ - \frac{\bar\tau_+^2}{2}. \label{031403-19a}
\end{align}
In consequence, \eqref{eq:limitcurrent} holds and Theorem \ref{theo:current} is proved. 
\end{proposition}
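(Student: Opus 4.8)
The plan is to read both limits directly off the boundary second-moment asymptotics already established in Corollary~\ref{prop:limit-stationary} and Proposition~\ref{prop:rnpn}, substituting them term by term into the explicit expression \eqref{eq:phi1} for $\phi$, and then to recover the current by telescoping the gradient relation \eqref{eq:gradientj}. Setting $x=1$ in \eqref{eq:phi1} gives
\[
\phi(1) = -\frac{1}{2\gamma}\Big(\langle r_1^2\rangle_{\rm ss} + \langle p_0 p_1\rangle_{\rm ss}\Big) - \frac{\gamma}{4}\Big(\langle p_1^2\rangle_{\rm ss} + \langle p_0^2\rangle_{\rm ss}\Big),
\]
and I would pass to the limit using $\langle p_0^2\rangle_{\rm ss},\langle p_1^2\rangle_{\rm ss}\to T_-$ from \eqref{eq:limitp}, $\langle r_1^2\rangle_{\rm ss}\to T_-$ from \eqref{eq:limit12}, and $\langle p_0 p_1\rangle_{\rm ss}=O(n^{-1/2})\to 0$ from \eqref{eq:vanishbd1}; this yields $\phi(1)\to -\frac{T_-}{2\gamma}-\frac{\gamma T_-}{2}=-\frac12(\gamma^{-1}+\gamma)T_-$, i.e.\ \eqref{031403-19}. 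The identical computation at $x=n$,
\[
\phi(n) = -\frac{1}{2\gamma}\Big(\langle r_n^2\rangle_{\rm ss} + \langle p_{n-1} p_n\rangle_{\rm ss}\Big) - \frac{\gamma}{4}\Big(\langle p_n^2\rangle_{\rm ss} + \langle p_{n-1}^2\rangle_{\rm ss}\Big),
\]
together with $\langle p_n^2\rangle_{\rm ss},\langle p_{n-1}^2\rangle_{\rm ss}\to T_+$ from \eqref{eq:limitp2}, $\langle r_n^2\rangle_{\rm ss}\to T_+ + \bar\tau_+^2$ from \eqref{eq:limit11}, and $\langle p_{n-1}p_n\rangle_{\rm ss}\to 0$ from \eqref{eq:vanishbd2}, gives $\phi(n)\to -\frac{1}{2\gamma}(T_+ + \bar\tau_+^2) - \frac{\gamma}{2}T_+ = -\frac12(\gamma^{-1}+\gamma)T_+ - \frac{\bar\tau_+^2}{2\gamma}$, which is \eqref{031403-19a}.

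For the current, I would sum the gradient identity \eqref{eq:gradientj} over $x=1,\dots,n-1$, which telescopes to $(n-1)\bar j_s = \phi(n)-\phi(1)$, exactly as in the computation \eqref{eq:njs}. Hence $n\bar j_s = \frac{n}{n-1}\big(\phi(n)-\phi(1)\big)$, and since $\phi(1)$ and $\phi(n)$ converge by the previous step while $\frac{n}{n-1}\to 1$, one passes to the limit to obtain
\[
n\bar j_s \ \xrightarrow[n\to\infty]{}\ \lim_n\phi(n)-\lim_n\phi(1) = -\frac12(\gamma^{-1}+\gamma)(T_+-T_-) - \frac{\bar\tau_+^2}{2\gamma},
\]
which is \eqref{eq:limitcurrent}, and this completes the proof of Theorem~\ref{theo:current}.

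There is essentially no genuine obstacle left at this stage: the analytic work — the entropy production bound of Corollary~\ref{cor:betterbound}, the boundary second-moment bounds of Proposition~\ref{prop:second-bnd} and Corollary~\ref{cor:bound}, the vanishing of the boundary covariances of Proposition~\ref{prop:rnpn}, and the six boundary limits collected in Corollary~\ref{prop:limit-stationary} — has already been carried out, so this proposition is purely an assembly step. The two small points worth attention are: (i) one must use the \emph{convergence} of $\phi(1)$ and $\phi(n)$, not merely the $O(1/n)$-boundedness of $\bar j_s$ from Proposition~\ref{prop:current-stationary}, in order to legitimately send the factor $n/(n-1)$ to its limit; and (ii) the asymmetry between the two ends — the extra $\bar\tau_+^2$ present in $\langle r_n^2\rangle_{\rm ss}$ but absent from $\langle r_1^2\rangle_{\rm ss}$, traceable via \eqref{eq:rn} and \eqref{eq:statelo} back to the boundary tension $\bar\tau_+$ — is exactly what produces the $\bar\tau_+^2$ contribution to $J_{\rm ss}$, and hence the possibility of uphill diffusion.
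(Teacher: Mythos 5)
Your proof is correct and follows the same route as the paper's own argument: substitute the boundary limits of Corollary \ref{prop:limit-stationary} and the vanishing covariances of Proposition \ref{prop:rnpn} into the explicit formula \eqref{eq:phi1} for $\phi(1)$ and $\phi(n)$, and then pass to the limit in the telescoped identity \eqref{eq:njs}. One remark: the value you obtain, $\lim_{n\to\infty}\phi(n)=-\frac12(\gamma^{-1}+\gamma)T_+-\frac{\bar\tau_+^2}{2\gamma}$, is the correct one --- it is forced by \eqref{eq:limit11} through the coefficient $\frac{1}{2\gamma}$ in \eqref{eq:phi1}, and it is the value consistent with \eqref{eq:limitcurrent} and with the later use in \eqref{eq:limitphi} --- so the term $-\frac{\bar\tau_+^2}{2}$ printed in \eqref{031403-19a} is evidently a typo (the two expressions coincide only when $\gamma=1$).
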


\begin{proof}
Limits in \eqref{031403-19} and \eqref{031403-19a} follow from formula \eqref{eq:phi1},
Proposition  \ref{prop:rnpn} and
the limits computed in Corollary \ref{prop:limit-stationary}. The
limit  \eqref{eq:limitcurrent} is a consequence of \eqref{031403-19}, \eqref{031403-19a} and formula \eqref{eq:njs}.
\end{proof}

\subsection{Energy bounds}\label{sec:bounds-energy}

We now provide bounds on the total energy under the stationary state:

\begin{proposition}[Energy bounds] 
\label{prop:bound-energy} There exists $C=C(\gamma,\tilde\gamma,\bar\tau_+,T_+,T_-)>0$, such that 
\begin{equation}
\label{051403-19}
  \frac1n\sum_{x=1}^n \langle p_x^2 \rangle_{ {\rm ss}} \leq C \qquad
  \text{and}\qquad    \frac1n \sum_{x=1}^n  \langle r_x^2
  \rangle_{ {\rm ss}} \leq C ,\quad n\ge1. \end{equation}
\end{proposition}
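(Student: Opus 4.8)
My plan is to combine the harmonicity of $\phi$ with one further stationarity identity. The representation \eqref{eq:phi1} already packages essentially all of the energy; the only defect is the cross-covariance $\langle p_{x-1}p_x\rangle_{\rm ss}$, which a priori could be strongly negative, and controlling that defect is what the extra identity is for. Throughout I set
\[
P:=\sum_{x=1}^n\langle p_x^2\rangle_{\rm ss},\qquad R:=\sum_{x=1}^n\langle r_x^2\rangle_{\rm ss},\qquad Q:=\sum_{x=1}^n\langle p_{x-1}p_x\rangle_{\rm ss},
\]
and I record that all the boundary quantities $\langle p_0^2\rangle_{\rm ss}$, $\langle p_1^2\rangle_{\rm ss}$, $\langle p_{n-1}^2\rangle_{\rm ss}$, $\langle p_n^2\rangle_{\rm ss}$, $\langle r_1^2\rangle_{\rm ss}$, $\langle r_n^2\rangle_{\rm ss}$ and the corner covariances $\langle p_0p_1\rangle_{\rm ss}$, $\langle p_{n-1}p_n\rangle_{\rm ss}$ are bounded uniformly in $n$, by Proposition \ref{prop:second-bnd}, Corollary \ref{cor:bound} and Proposition \ref{prop:rnpn}; such terms will be written $O(1)$, and terms bounded by a constant times $n$ will be written $O(n)$.

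First I would exploit harmonicity globally. By \eqref{eq:gradientj}, $\phi(x)=\phi(1)+(x-1)\bar j_s$; since $|\phi(1)|,|\phi(n)|\le C$ (read off from \eqref{eq:phi1} together with the boundary bounds above) and $|\bar j_s|\le C/n$ by Proposition \ref{prop:current-stationary}, it follows that $\sup_{1\le x\le n}|\phi(x)|\le C$ and hence $\big|\sum_{x=1}^n\phi(x)\big|\le Cn$. Summing the identity \eqref{eq:phi1} over $x\in\T_n$ and collecting the $O(1)$ boundary contributions then yields
\[
\tfrac1{2\gamma}R+\tfrac1{2\gamma}Q+\tfrac\gamma2 P=-\sum_{x=1}^n\phi(x)+O(1)=O(n),\qquad\text{equivalently}\qquad Q=O(n)-R-\gamma^2P .
\]

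The missing relation I would get from the stationarity identities $\langle L(p_xr_x)\rangle_{\rm ss}=\langle L(p_{x-1}r_x)\rangle_{\rm ss}=0$, computed exactly as \eqref{eq:a}--\eqref{eq:b}. Subtracting them and using \eqref{eq:equ} to cancel the $\langle p_xr_x\rangle_{\rm ss}$ terms gives, for every $x\in\{2,\dots,n-1\}$,
\[
\langle (p_x-p_{x-1})^2\rangle_{\rm ss}=2\langle r_x^2\rangle_{\rm ss}-\langle r_{x-1}r_x\rangle_{\rm ss}-\langle r_xr_{x+1}\rangle_{\rm ss}.
\]
Summing over $x\in\{2,\dots,n-1\}$, the left-hand side equals $2(P-Q)+O(1)$ while the right-hand side is $\le 4R$ by $2|ab|\le a^2+b^2$; hence $P-Q\le 2R+O(1)$.

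It then remains to close these two displays by elementary manipulations. Combining $Q=O(n)-R-\gamma^2P$ with $P-Q\le 2R+O(1)$ gives $(1+\gamma^2)P\le R+O(n)$, while combining $Q=O(n)-R-\gamma^2P$ with the trivial bound $|Q|\le P+O(1)$ gives $R\le(1-\gamma^2)P+O(n)$. Inserting the latter into the former, $R$ cancels and the coefficient of $P$ becomes $2\gamma^2$, so $2\gamma^2P\le O(n)$, whence $P\le Cn$ and then, from $R\le(1-\gamma^2)P+O(n)$, also $R\le Cn$; dividing by $n$ gives \eqref{051403-19}. The one point requiring care is the accounting of the $O(1)$ boundary terms, which is exactly what the estimates of Section \ref{sec:bounds-second-moment} provide; the conceptual obstacle is that harmonicity of $\phi$ alone is \emph{not} coercive for $\gamma\le 1$ — the per-site quadratic form $-\phi(x)$ in $(r_x,p_{x-1},p_x)$ degenerates at $\gamma=1$ and is indefinite below it — and the identity coming from $L(p_xr_x)-L(p_{x-1}r_x)$ is precisely what restores coercivity for all $\gamma>0$.
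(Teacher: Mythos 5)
Your proof is correct, and it reaches \eqref{051403-19} by a genuinely different route than the paper at the key step, namely the control of the cross term $Q=\sum_x\langle p_{x-1}p_x\rangle_{\rm ss}$. The paper also starts from harmonicity of $\phi$ and the summed identity \eqref{eq:aaa}, but then it proves the much stronger statement \eqref{041403-19}, that $Q=O(n^{-1/2})$: it builds the gradient identity \eqref{eq:gradpa} out of $\langle L(p_{x-1}p_x)\rangle_{\rm ss}=0$, $\langle L(r_xr_{x+1})\rangle_{\rm ss}=0$ and \eqref{eq:equ}, telescopes it in \eqref{eq:sump}, and kills the boundary terms via \eqref{eq:p2r2} and \eqref{eq:pnrn-1}, which rest on the entropy-production-based covariance decay of Proposition \ref{prop:rnpn}; the energy bound (indeed the exact limit of $\frac{1}{2\gamma n}R+\frac{\gamma}{2n}P$) then falls out of \eqref{eq:limitphi}. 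You instead take the difference of the stationarity identities for $p_xr_x$ and $p_{x-1}r_x$, use \eqref{eq:equ} to get $\langle(p_x-p_{x-1})^2\rangle_{\rm ss}=2\langle r_x^2\rangle_{\rm ss}-\langle r_{x-1}r_x\rangle_{\rm ss}-\langle r_xr_{x+1}\rangle_{\rm ss}$, and close a small system of inequalities ($R+Q+\gamma^2P=O(n)$, $P-Q\le 2R+O(1)$, $|Q|\le P+O(1)$), whose bookkeeping I checked and which does yield $2\gamma^2P\le O(n)$ and then $R\le O(n)$ for every $\gamma>0$. What your approach buys is economy of hypotheses: you only need the crude bounds $|\sum_x\phi(x)|\le Cn$, $|\bar j_s|\le C/n$ (Proposition \ref{prop:current-stationary}) and $O(1)$ boundary second moments — in particular the corner covariances you quote from Proposition \ref{prop:rnpn} are only needed at the $O(1)$ level, which already follows from Cauchy--Schwarz and Proposition \ref{prop:second-bnd}/Corollary \ref{cor:bound}, so your proof of this proposition bypasses the entropy production estimate \eqref{eq:88} entirely. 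What the paper's route buys is the sharper by-products ($Q=O(n^{-1/2})$ and the limiting value of the weighted energy sum), which sit naturally next to the later use of \eqref{eq:gradpa} in Proposition \ref{prop:correlation}; your argument proves exactly the stated bound and no more, which is all that is required here. Your closing remark about the loss of coercivity of $-\phi$ at $\gamma\le 1$ is also accurate and explains why the extra identity is genuinely needed.
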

\begin{proof}
From the current decomposition given by \eqref{eq:gradientj}, we easily get that 
\[
\phi(x) = (x-1) \bar j_s + \phi(1), \qquad \text{for any } x \in \T_n.
\]
Summing over $x$, this gives
\[
\frac1n\sum_{x=1}^n \phi(x) = \frac1n \sum_{x=2}^n (x-1) \bar j_s + \phi(1)  = \frac{n(n-1)}{2n}\; \bar j_s +  \phi(1).
\]
Therefore, recalling \eqref{031403-19} and \eqref{eq:limitcurrent},
we get
\begin{equation}
\label{eq:limitphi}
\frac1n\sum_{x=1}^n \phi(x) \xrightarrow[n\to\infty]{}  -\frac14\big( \gamma^{-1} + \gamma \big)(T_++T_-) - \frac{\bar\tau_+^2}{4\gamma}.
\end{equation}
From \eqref{eq:phi1}, we have 
\begin{equation}\label{eq:aaa}
\frac1n\sum_{x=1}^n \phi(x)  = - \frac{1}{2\gamma n} \sum_{x=1}^n \langle r_x^2 \rangle_{ {\rm ss}} - \frac{1}{2\gamma n} \sum_{x=1}^n \langle p_x p_{x-1} \rangle_{ {\rm ss}} - \frac{\gamma}{2n} \sum_{x=1}^n \langle p_x^2 \rangle_{ {\rm ss}} + \frac{\gamma}{4n} \big( \langle p_n^2 \rangle_{ {\rm ss}}- \langle p_0^2 \rangle_{ {\rm ss}}\big). \end{equation}
To compute the limit of the second sum in the right hand side of
\eqref{eq:aaa}, we first write: 
\begin{align}
\label{023108-19}
n^{-2} L (p_{x-1}p_x) & = (r_{x+1}-r_x) p_{x-1} + (r_x-r_{x-1})p_x - 2\gamma p_xp_{x-1}, \qquad x \in\{2,\dots,n-1\}.
\end{align}
Thus,  taking the average with respect to the stationary state and
subsequently using \eqref{eq:equ}, we obtain 
\begin{align}
2\gamma \langle p_xp_{x-1}\rangle_{ {\rm ss}} & = \langle p_x r_x\rangle_{ {\rm ss}}
+\langle p_{x-1}r_{x+1} \rangle_{ {\rm ss}} -  \langle p_{x}r_{x-1}\rangle_{ {\rm ss}}  -
                                      \langle
                                      p_{x-1}r_{x}\rangle_{ {\rm ss}} \notag\label{eq:gradp1}
  \\
&=
\langle p_{x-1}r_{x+1} \rangle_{ {\rm ss}} -  \langle p_{x}r_{x-1}\rangle_{ {\rm ss}} . 
\end{align}
On the other hand
\begin{align*}
n^{-2} L (r_xr_{x+1}) & = (p_x-p_{x-1})r_{x+1} + (p_{x+1}-p_x)r_x , \qquad x \in \{1,\dots,n-1\} .
\end{align*}
Hence, taking the average and using again \eqref{eq:equ}, we get
\begin{align*}
0& = \langle p_{x+1}r_{x} \rangle_{ {\rm ss}}+\langle p_{x}r_{x+1} \rangle_{ {\rm ss}} -  \langle p_{x}r_{x}\rangle_{ {\rm ss}}  -  \langle p_{x-1}r_{x+1}\rangle_{ {\rm ss}} \notag \\
& =\langle p_{x+1}r_{x} \rangle_{ {\rm ss}}+\langle p_{x+1}r_{x+1} \rangle_{ {\rm ss}} -  \langle p_{x}r_{x}\rangle_{ {\rm ss}}  -  \langle p_{x-1}r_{x+1}\rangle_{ {\rm ss}},  
\end{align*}
which yields
$$
\langle p_{x+1}r_{x+1} \rangle_{ {\rm ss}} -  \langle p_{x}r_{x}\rangle_{ {\rm ss}}  =  \langle p_{x-1}r_{x+1}\rangle_{ {\rm ss}}-\langle p_{x+1}r_{x} \rangle_{ {\rm ss}}
$$
for any $x \in \{2,\dots,n-1\}$. Combining with \eqref{eq:gradp1} we
get
\begin{align}
\label{eq:gradpa}
2\gamma \langle p_xp_{x-1}\rangle_{ {\rm ss}} & = 
\langle p_{x+1}r_{x+1} \rangle_{ {\rm ss}} -  \langle p_{x}r_{x}\rangle_{ {\rm ss}}+\langle p_{x+1}r_{x}\rangle_{ {\rm ss}}-  \langle p_{x}r_{x-1}\rangle_{ {\rm ss}} ,                         \end{align}
for any $ x \in \{2,\dots,n-1\}$. 
 Summing over $x$, one gets: 
\begin{equation} \sum_{x=2}^{n-1} \langle p_{x}p_{x-1}\rangle_{ {\rm ss}} = \frac{1}{2\gamma} \big(  \langle p_n r_n \rangle_{ {\rm ss}} - \langle p_2r_2 \rangle_{ {\rm ss}} + \langle p_n r_{n-1} \rangle_{ {\rm ss}} - \langle p_2 r_1 \rangle_{ {\rm ss}}\big). \label{eq:sump} \end{equation}
To compute the limit as $n\to\infty$, we need to estimate the
covariances appearing in the right hand side.
The covariance $\langle
p_n r_n \rangle_{ {\rm ss}}  $ can be estimated thanks to Proposition
\ref{prop:rnpn}.
 We still need the bounds on the covariances $\langle p_2r_2 \rangle_{ {\rm ss}}$,  $ \langle p_n r_{n-1}
\rangle_{ {\rm ss}} $ and $ \langle p_2 r_1 \rangle_{ {\rm ss}}$. 
To deal with it write
\begin{align}
n^{-2}L(p_0p_1)&  = (r_2-r_1)p_0 + r_1 p_1 - \frac12(3\gamma + \tilde\gamma) p_0p_1 \label{eq:b1}\\
n^{-2}L(r_1r_2)&  = (p_1-p_0)r_2 + (p_2 -p_1) r_1 \label{eq:b2} \\
n^{-2}L(p_{n-1}p_n) & = (\bar\tau_+-r_n)p_{n-1} + (r_n-r_{n-1})p_n - \frac12(3\gamma + \tilde\gamma)  p_{n-1}p_n. \label{eq:b3}
\end{align}
Taking averages  {with respect to the stationary state} and summing  \eqref{eq:b1} and \eqref{eq:b2}
sideways gives (using $\langle p_2 r_2 \rangle_{ {\rm ss}} = \langle p_1 r_2 \rangle_{ {\rm ss}}$ from \eqref{eq:equ})  
\begin{equation}\label{eq:p2r2} \langle p_2 r_2 \rangle_{ {\rm ss}} + \langle p_2 r_1 \rangle_{ {\rm ss}} = \langle p_0 r_1 \rangle_{ {\rm ss}} + \frac12(3\gamma + \tilde \gamma) \langle p_0 p_1 \rangle_{ {\rm ss}} \xrightarrow[n\to\infty]{}0, \end{equation}
 from Proposition \ref{prop:rnpn}. Moreover, \eqref{eq:b3} gives (using $\langle p_n r_n \rangle_{ {\rm ss}} = \langle p_{n-1} r_n \rangle_{ {\rm ss}}$)  
 \begin{equation} \langle r_{n-1}p_n \rangle_{ {\rm ss}} = \bar\tau_+ \langle p_{n-1}\rangle_{ {\rm ss}} - \frac12(3\gamma+\tilde\gamma) \langle p_{n-1}p_n\rangle_{ {\rm ss}} \xrightarrow[n\to\infty]{}0, \label{eq:pnrn-1}\end{equation}
 from \eqref{eq:statvel} and Proposition \ref{prop:rnpn}. Therefore,
 we have proved that \eqref{eq:sump} vanishes as $n\to\infty$. In
 fact, due to the estimates obtained in Proposition \ref{prop:rnpn} we have even proved that there exists a constant $C=C(\gamma,\tilde\gamma,\bar\tau_+,T_+,T_-)>0$, such that
 \begin{equation}
\label{041403-19}
\Big|\sum_{x=1}^n \langle p_x p_{x-1}\rangle_{ {\rm ss}}\Big| \leq
\frac{C}{\sqrt n} ,\quad n\ge1.
\end{equation}
From \eqref{eq:aaa} it follows that 
\begin{align*}
\frac{1}{2\gamma n} \sum_{x=1}^n \langle r_x^2 \rangle_{ {\rm ss}} + \frac{\gamma}{2n} \sum_{x=1}^n \langle p_x^2 \rangle_{ {\rm ss}} & = - \frac{1}{2 \gamma n }\sum_{x=1}^n \langle p_x p_{x-1} \rangle_{ {\rm ss}} - \frac{1}{ n} \sum_{x=1}^n \phi(x) + \frac{\gamma}{4 n} \big(\langle p_n^2\rangle_{ {\rm ss}} - \langle p_0^2 \rangle_{ {\rm ss}} \big) \\
& \xrightarrow[n\to\infty]{} \frac14\big( \gamma^{-1} + \gamma \big)(T_++T_-)+ \frac{\bar\tau_+^2}{4\gamma},
\end{align*}
due to \eqref{eq:limitphi} and \eqref{041403-19}.
This in particular implies estimate \eqref{051403-19}.
\end{proof}

Thanks to the energy bounds, we  are finally able to prove one further
convergence, which will be essential in establishing  Proposition \ref{prop:limitsE}. 
\begin{proposition} \label{prop:correlation} 
For any continuous test function $G:\T\to\R$ we have
\begin{equation}
\frac1n \sum_{x=1}^{n-1}G\Big(\frac x n\Big)\langle p_x p_{x+1}\rangle_{ {\rm ss}}   \xrightarrow[n\to\infty]{} 0. \label{eq:corr2}
\end{equation}
\end{proposition}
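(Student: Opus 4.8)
The idea is to use once more the discrete‑gradient identity \eqref{eq:gradpa} for the momentum correlations, combined with the global energy bounds of Proposition \ref{prop:bound-energy} and the sharp boundary estimates of Proposition \ref{prop:rnpn}. Set $c_x:=\langle p_xp_{x+1}\rangle_{\rm ss}$, $a_x:=\langle p_xr_x\rangle_{\rm ss}$ and $d_x:=\langle p_{x+1}r_x\rangle_{\rm ss}$. Shifting $x\mapsto x+1$ in \eqref{eq:gradpa} gives
\[
2\gamma\, c_x=(a_{x+2}-a_{x+1})+(d_{x+1}-d_x),\qquad x\in\{1,\dots,n-2\},
\]
while the remaining pair $c_{n-1}=\langle p_{n-1}p_n\rangle_{\rm ss}$ is $O(n^{-1/2})$ by \eqref{eq:vanishbd2}, so $\tfrac1nG\big(\tfrac{n-1}n\big)c_{n-1}\to0$ and can be set aside. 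It therefore suffices to prove $\tfrac1n\sum_{x=1}^{n-2}G(x/n)\,c_x\to0$.

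Next I would apply an Abel (summation by parts) transform to $\tfrac{2\gamma}n\sum_{x=1}^{n-2}G(x/n)\,c_x$. Tracking indices, this produces exactly two boundary contributions, $\tfrac1nG\big(\tfrac{n-2}n\big)\big(a_n+d_{n-1}\big)$ and $-\tfrac1nG(\tfrac1n)\big(a_2+d_1\big)$, together with two bulk sums of the form $\tfrac1n\sum_k\big(G(\tfrac{k-2}n)-G(\tfrac{k-1}n)\big)a_k$ and $\tfrac1n\sum_k\big(G(\tfrac{k-1}n)-G(\tfrac kn)\big)d_k$. For the bulk sums, setting $\omega_G(\delta):=\sup_{|u-v|\le\delta}|G(u)-G(v)|$, uniform continuity of $G$ on $\T=[0,1]$ gives $\omega_G(1/n)\to0$, while Cauchy--Schwarz and the energy bounds \eqref{051403-19} yield
\[
\frac1n\sum_k|a_k|\le\Big(\frac1n\sum_k\langle p_k^2\rangle_{\rm ss}\Big)^{1/2}\Big(\frac1n\sum_k\langle r_k^2\rangle_{\rm ss}\Big)^{1/2}\le C,
\]
and likewise $\tfrac1n\sum_k|d_k|\le C$; hence both bulk sums are $O(\omega_G(1/n))\to0$.

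It remains to control the two boundary terms. At the right end, $a_n+d_{n-1}=\langle p_nr_n\rangle_{\rm ss}+\langle p_nr_{n-1}\rangle_{\rm ss}$; the first covariance tends to $0$ by \eqref{eq:vanishbd2}, and the second does so by \eqref{eq:pnrn-1} (using also $\langle p_{n-1}\rangle_{\rm ss}=\bar p_s=O(n^{-1})$ from \eqref{eq:statvel} and the bound on $\langle p_{n-1}p_n\rangle_{\rm ss}$). At the left end, $a_2+d_1=\langle p_2r_2\rangle_{\rm ss}+\langle p_2r_1\rangle_{\rm ss}$, which tends to $0$ directly by \eqref{eq:p2r2}. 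Since in both cases the bracket tends to $0$ and is multiplied by $1/n$, these terms vanish in the limit. Collecting all the pieces yields $\tfrac1n\sum_{x=1}^{n-1}G(x/n)\langle p_xp_{x+1}\rangle_{\rm ss}\to0$, i.e.\ \eqref{eq:corr2}.

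The only genuinely delicate point is the treatment of the boundary contributions. The estimate \eqref{041403-19} controls only the \emph{unweighted} sum $\sum_x\langle p_xp_{x+1}\rangle_{\rm ss}$, so for a general test function one really needs the gradient structure \eqref{eq:gradpa}; and after the Abel transform a single boundary covariance such as $\langle p_2r_2\rangle_{\rm ss}$ is, a priori, only $O(n)$ via the energy bounds, which is not enough once divided by $n$. This is precisely why one must invoke the sharper cancellations \eqref{eq:p2r2} and \eqref{eq:pnrn-1} (themselves consequences of the entropy‑production estimate, through Proposition \ref{prop:rnpn}) rather than crude second‑moment bounds. Everything else is routine bookkeeping.
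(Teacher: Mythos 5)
Your argument is correct and follows essentially the same route as the paper: you rewrite $\langle p_xp_{x+1}\rangle_{\rm ss}$ via the gradient identity \eqref{eq:gradpa}, sum by parts, kill the boundary contributions with \eqref{eq:vanishbd2}, \eqref{eq:p2r2} and \eqref{eq:pnrn-1}, and control the bulk with the energy bounds \eqref{051403-19}. The only (harmless) deviation is that you estimate the bulk sums directly through the modulus of continuity of $G$, whereas the paper first treats $G\in C^1(\T)$ using $\|G'\|_\infty$ and then extends to continuous $G$ by density.
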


\begin{proof}
Assume first that $G\in C^1(\T)$. For the brevity sake 
we denote $G_x:=G( x/ n)$ for any $x \in \T_n$ and  $\psi(x) = \langle p_{x+1} r_{x+1} \rangle_{ {\rm ss}} + \langle p_{x+1} r_{x}\rangle_{ {\rm ss}}.$ Then \eqref{eq:gradpa}  says that 
\[ \langle p_xp_{x+1} \rangle_{ {\rm ss}} = \frac{1}{2\gamma}\big(\psi(x+1)-\psi(x)\big), \qquad \text{for any } x \in \{1,\dots,n-2\}.\]
Therefore, by an application of summation by parts formula, we get
\begin{align}
\frac1n \sum_{x=1}^{n-1} G_x \langle p_x p_{x+1}\rangle_{ {\rm ss}} & = \frac{1}{2\gamma n^2} \sum_{x=2}^{n-2} n (G_{x-1}-G_x) \psi(x) \label{eq:sum}\\
& \quad + \frac{1}{n}\langle p_np_{n-1} \rangle_{ {\rm ss}} G_{n-1} + \frac{1}{2\gamma n} \big( \langle p_n r_n \rangle_{ {\rm ss}} + \langle p_n r_{n-1} \rangle_{ {\rm ss}} \big) \label{eq:bndterm1} \\ & \quad - \frac{1}{2\gamma n} \big(\langle p_2 r_2 \rangle_{ {\rm ss}} + \langle p_2 r_1\rangle_{ {\rm ss}} \big) G_1.  \label{eq:bndterm2}
\end{align}
The boundary terms \eqref{eq:bndterm1} and \eqref{eq:bndterm2} vanish,
as $n \to \infty$, thanks to \eqref{eq:vanishbd2}, \eqref{eq:p2r2} and
\eqref{eq:pnrn-1}. To deal with the sum in the right hand side of \eqref{eq:sum} 
note that, since $G\in C^1(\T)$, we have
\begin{equation}
\label{gp}
\sup_{x\in\{2,\ldots,n-2\}}n |G_{x-1}-G_x|\le \|G'\|_\infty.
\end{equation}
Since 
$$
|\psi(x)|\le 2\left(2\langle p^2_{x+1} \rangle_{ {\rm ss}}+\langle r_x^2
  \rangle_{ {\rm ss}}+\langle r^2_{x+1} \rangle_{ {\rm ss}}\right), \qquad  x\in\{1,\ldots,n-1\}
$$
we conclude that 
\begin{equation}
\label{gp1}
\frac{1}{2\gamma n^2} \left|\sum_{x=2}^{n-2} n (G_{x-1}-G_x)
  \psi(x)\right|\le \frac{C}{n}\left\{ \frac1n\sum_{x=1}^n \big( \langle p_x^2 \rangle_{ {\rm ss}}+ \langle r_x^2 \rangle_{ {\rm ss}}\big)\right\},
\end{equation}
which vanishes, as $n\to+\infty$, thanks to the energy bound \eqref{051403-19}. This proves
\eqref{eq:corr2} for any test function $G\in C^1(\T)$. The result can
be extended to all continuous functions by the standard density argument and
the energy bound \eqref{051403-19}.
\end{proof}

\subsection{Proof of Proposition \ref{prop:limitsE}} \label{sec:proofprop}
We now have at our disposal all components needed to prove Proposition \ref{prop:limitsE} and thus conclude the proof of Theorem \ref{theo:energy}.  There are three convergences to prove: 

\subsubsection*{Proof of \eqref{eq:limitEphi}}
From Proposition \ref{prop:current} (in particular
\eqref{eq:phiharmonic}),  \mm{the function $\phi$ is linear and is completely determined from the values at the endpoints. More precisely, 
\[ \phi(x)= \frac{\phi(n)-\phi(1)}{n-1} x + \frac{n \phi(1)-\phi(n)}{n-1}, \qquad \text{for any } x \in \T_n.\]
Since, from Proposition \ref{prop:limit-current}, the values $\phi(1)$ and $\phi(n)$ are of order 1 as $n\to\infty$, we see that
\[ 
\phi(x) \simeq \big(\phi(n)-\phi(1)\big) \frac{x}{n} + \phi(1), \qquad \text{as } n\to\infty,
\]} and therefore we easily obtain
\begin{equation*} \frac{1}{n}  \sum_{x \in \T_n} G\Big(\frac{x}{n}\Big) \phi(x) \xrightarrow[n\to\infty]{} \int_\T   G(u) \bigg\{ -\frac{\bar\tau_+^2}{2\gamma} \; u - \frac12(\gamma^{-1}+\gamma)\Big[(T_+-T_-)u+ T_-\Big] \bigg\} \; \dd u.
 \end{equation*}
which proves directly \eqref{eq:limitEphi}.

\subsubsection*{Proof of \eqref{eq:limitEnabla}}
Concerning $\mc H_n^\nabla(G)$ we use  a summation by parts formula
(with the notation $G_x=G( x /n)$), which leads to: 
\begin{equation*}
\mc H_n^\nabla(G) = \frac{\gamma^2}{2(1+\gamma^2)} \bigg( \frac{G_n}{n}\langle p_n^2 \rangle_{ {\rm ss}} - \frac{G_1}{n}\langle p_0^2 \rangle_{ {\rm ss}} + \frac{1}{n^2} \sum_{x=1}^{n-1} n(G_x-G_{x+1}) \langle p_x^2 \rangle_{ {\rm ss}} \bigg).
\end{equation*} 
The boundary terms in the right hand side vanish, as $n\to+\infty$,
since $\langle p_n^2\rangle_{ {\rm ss}}$ and $\langle p_0^2 \rangle_{ {\rm ss}}$ are
bounded, due to \eqref{eq:10}. To deal with the limit of the last sum in the right
hand side, we can repeat the argument made in \eqref{gp}-\eqref{gp1},
which shows that the expression vanishes. Thus 
\eqref{eq:limitEnabla} holds. 

\subsubsection*{Proof of \eqref{eq:limitEcorr}} 
This is a consequence of  \eqref{eq:corr2}.


\appendix

\section{Non-Stationary behaviour}

\label{sec:form-deriv-equat}

In this section we explain how to derive \eqref{eq:linear}
  and \eqref{eq:4}: while the derivation of \eqref{eq:linear}
is rigorous, in order to obtain \eqref{eq:4} we need to assume a form of local equilibrium that
allows for the local equipartition of  kinetic and potential energy,
see \eqref{conj1} below. In the stationary setting this term corresponds to $\mc
H_n^{\text{m}}(G)$ in \eqref{energy:decomp} and, similarly, does not appear in the
case $\ga=1$. 
Unfortunately, quite analogously with the stationary situation, the
relative entropy method does not allow us to treat the case $\gamma \not= 1$.
Throughout the present section we allow $\bar
\tau_+(t)$ to be a $C^1$ function.

\subsection{Preliminaries}

 In the present section we establish non-stationary asymptotics
corresponding to Corollary \ref{prop:limit-stationary}. They will be
useful in proving the hydrodynamic limit in Section \ref{secA.2}.
Since  $\nu_{T_+}$ is not  stationary, except for the corresponding equilibrium boundary conditions,
the relative entropy $H_n(t)$ defined as \[ 
H_n(t):= \int f_n(t) \log f_n(t) \dd \nu_{T_+}
\] is not strictly decreasing in time, where hereafter $f_n(t)$ is
the density of the $\Om_n$--valued  random variable
$\left(\mathbf{r}_n(t),\mathbf{p}_n(t)\right)$ (recall \eqref{eq:defrp}), with respect to $\nu_{T_+}$. However, the effect of the boundary condition can be controlled and one can
obtain a linear in $n$ bound at any  time $t$, i.e.~(see
the proof of
Proposition 4.1 in \cite{letizia} for the details of the argument)
\begin{equation}
  \label{eq:5}
  H_n(t) \le  C(t) n,\quad n\ge 1,\,t\ge0.
\end{equation}
Both here and throughout the remainder of the paper  $C(t)$ shall denote a
  generic constant,   always independent of
  $n$ and locally bounded in $t$.

Furthermore, one obtains the bounds on the Dirichlet form controlling the entropy production,
similar to \eqref{eq:88},
\begin{multline}
 \int_0^t \dd s \left[ \gamma \sum_{x=0}^{n-1} {\mc D}_x (f_n(s)) + 
 \tilde\gamma T_- \; \Big\llangle   \frac{(\partial_{p_0} (f_n(s)/h))^2}{(f_n(s)/h)} \Big\rrangle_{T_-}+ 
\tilde\gamma T_+ \; \Big\llangle  \frac{(\partial_{p_n} f_n(s))^2}{f_n(s)}
\Big\rrangle_{T_+} \right] \\ \leq \frac{C(t)}{n}, \label{eq:88t}
\end{multline}
where $h = g_{T_-}/g_{T_+}$  and $n\ge1$, $t\ge0$,  see
  Proposition 4.1 and Appendix D of \cite{letizia} for the
  proof.

Below we list some consequences of the
  above bounds on the entropy and Dirichlet forms.
\begin{lemma}\label{pbond}
 The following equalities hold:
  \begin{equation}
    \label{eq:ex-1}
\lim_{n\to\infty}    \mathbb{E}\bigg[   \int_0^t  \left(p_{n}^2(s) - T_+\right)\dd s\bigg]  = 0,\qquad
   \lim_{n\to\infty}  \mathbb{E}\bigg[   \int_0^t  \left(p_{0}^2(s) - T_-\right)\dd s\bigg]  = 0
 \end{equation}
and
 \begin{equation}
   \label{eq:ex-2}
   \lim_{n\to\infty}    \mathbb{E}\bigg[   \int_0^t  j_{n-1,n}(s) \dd s\bigg]  = 0,\qquad
   \lim_{n\to\infty}    \mathbb{E}\bigg[   \int_0^t  j_{0,1}(s) \dd s\bigg]  = 0.
 \end{equation}

\end{lemma}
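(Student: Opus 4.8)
The plan is to obtain \eqref{eq:ex-1} and \eqref{eq:ex-2} as the non-stationary counterparts of the boundary limits of Corollary \ref{prop:limit-stationary}, replacing the stationarity relation $L^\star\mu_{\rm ss}=0$ by the time-integrated entropy and Dirichlet-form bounds \eqref{eq:5} and \eqref{eq:88t}. I write $\tilde f_n(t):=f_n(t)/h$, which is a probability density with respect to $\nu_{T_-}$ since $h=g_{T_-}/g_{T_+}$ is the density of $\nu_{T_-}$ with respect to $\nu_{T_+}$, and I set
\[
D^-_0(s):=\Big\llangle\frac{(\partial_{p_0}\tilde f_n(s))^2}{\tilde f_n(s)}\Big\rrangle_{T_-},\qquad D^+_n(s):=\Big\llangle\frac{(\partial_{p_n}f_n(s))^2}{f_n(s)}\Big\rrangle_{T_+},
\]
so that \eqref{eq:88t} reads $\int_0^t D^-_0(s)\,\dd s\le C(t)/n$ and $\int_0^t D^+_n(s)\,\dd s\le C(t)/n$. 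The formal integrations by parts below (the densities are not compactly supported) are justified exactly as in the stationary case, by the variational-principle argument of \cite{bo2}.

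\emph{Proof of \eqref{eq:ex-1}.} From the pointwise identity $(p_0^2-T_-)g_{T_-}=-T_-\,\partial_{p_0}(p_0 g_{T_-})$ and one integration by parts in $p_0$, $\mathbb E[p_0^2(s)-T_-]=T_-\llangle p_0\,\partial_{p_0}\tilde f_n(s)\rrangle_{T_-}$. By Cauchy--Schwarz and the identity $\llangle p_0^2\,\tilde f_n(s)\rrangle_{T_-}=\mathbb E[p_0^2(s)]$ this is bounded by $T_-\,D^-_0(s)^{1/2}\,\mathbb E[p_0^2(s)]^{1/2}$. Inserting this into $\mathbb E[p_0^2(s)]=T_-+T_-\llangle p_0\,\partial_{p_0}\tilde f_n(s)\rrangle_{T_-}$ gives the self-improving estimate $\mathbb E[p_0^2(s)]\le T_-^2 D^-_0(s)+2T_-$, hence $\int_0^t\mathbb E[p_0^2(s)]\,\dd s\le C(t)$ \emph{uniformly in} $n$. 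A Cauchy--Schwarz in time then yields
\[
\int_0^t\big|\mathbb E[p_0^2(s)-T_-]\big|\,\dd s\le T_-\Big(\int_0^t D^-_0(s)\,\dd s\Big)^{1/2}\Big(\int_0^t\mathbb E[p_0^2(s)]\,\dd s\Big)^{1/2}\le\frac{C(t)}{\sqrt n}\xrightarrow[n\to\infty]{}0 ,
\]
which is the second limit in \eqref{eq:ex-1}. The first limit is identical at the right end, working directly with $f_n(s)$ and $\nu_{T_+}$ via $\mathbb E[p_n^2(s)-T_+]=T_+\llangle p_n\,\partial_{p_n}f_n(s)\rrangle_{T_+}$ and $D^+_n$; in particular one also gets $\int_0^t\mathbb E[p_n^2(s)]\,\dd s\le C(t)$ uniformly in $n$.

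\emph{Proof of \eqref{eq:ex-2}.} The microscopic energy balance \eqref{eq:en-evol} at $x=0$ and $x=n$ together with \eqref{eq:current-bound} gives $j_{0,1}=\frac{\tilde\gamma}{2}(T_--p_0^2)-\frac12 n^{-2}L(p_0^2)$ and $j_{n-1,n}=n^{-2}L\mathcal E_n-\frac{\tilde\gamma}{2}(T_+-p_n^2)-\bar\tau_+(s)\,p_n$. Integrating in time, Dynkin's formula turns the $n^{-2}L$--terms into $n^{-2}(\mathbb E[p_0^2(t)]-\mathbb E[p_0^2(0)])$, resp. $n^{-2}(\mathbb E[\mathcal E_n(t)]-\mathbb E[\mathcal E_n(0)])$, which are $O(C(t)/n)$ because the entropy inequality combined with \eqref{eq:5} (and $\nu_{T_+}$-integrability of $e^{\alpha p_0^2}$, $e^{\alpha \mathcal E_n}$ for small $\alpha$) bounds the expectations by $C(t)n$; the terms $\frac{\tilde\gamma}{2}\int_0^t\mathbb E[T_--p_0^2(s)]\,\dd s$ and $\frac{\tilde\gamma}{2}\int_0^t\mathbb E[T_+-p_n^2(s)]\,\dd s$ vanish by \eqref{eq:ex-1}; and since $\bar\tau_+\in C^1$ is bounded on $[0,t]$, it remains to see that $\int_0^t|\mathbb E[p_n(s)]|\,\dd s\to0$, which follows from $\mathbb E[p_n(s)]=T_+\llangle\partial_{p_n}f_n(s)\rrangle_{T_+}$, Cauchy--Schwarz $|\mathbb E[p_n(s)]|\le T_+ D^+_n(s)^{1/2}$, and $\int_0^t|\mathbb E[p_n(s)]|\,\dd s\le T_+\sqrt t\big(\int_0^t D^+_n(s)\,\dd s\big)^{1/2}\le C(t)/\sqrt n$. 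Collecting the terms proves \eqref{eq:ex-2}. The only genuinely delicate point in the whole argument is the self-improving inequality producing the $n$-uniform bounds on $\int_0^t\mathbb E[p_0^2(s)]\,\dd s$ and $\int_0^t\mathbb E[p_n^2(s)]\,\dd s$: without them, Cauchy--Schwarz in time against the $O(1/n)$ Dirichlet-form bound would only give $O(1)$ rather than $o(1)$; everything else reduces to the microscopic identities already recorded in the main text plus routine use of Dynkin's formula and the entropy inequality.
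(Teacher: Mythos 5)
Your proof is correct and follows essentially the paper's own route: integration by parts in the boundary momentum variables plus Cauchy--Schwarz against the Dirichlet forms controlled by \eqref{eq:88t} for \eqref{eq:ex-1}, and the boundary energy-balance identities with Dynkin's formula, \eqref{eq:ex-1}, and the $O(1/\sqrt n)$ bound on $\int_0^t|\mathbb{E}[p_n(s)]|\,\dd s$ for \eqref{eq:ex-2}. The only (harmless) deviation is how the $n$-uniform bound on $\int_0^t \mathbb{E}[p_0^2(s)]\,\dd s$ and $\int_0^t \mathbb{E}[p_n^2(s)]\,\dd s$ is obtained: you use a pointwise-in-$s$ Young-type self-improving inequality, whereas the paper first invokes the entropy-based a priori bound \eqref{p-n} (of order $n$) to get an $O(1)$ estimate on these time integrals and then re-inserts it into the same Cauchy--Schwarz, both yielding the same $C/\sqrt n$ rate.
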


\begin{proof}
  Note that, 
  \begin{align*}
      \mathbb{E}\bigg[   \int_0^t  \left(p_{n}^2(s) - T_+\right)\dd s\bigg] &  =
      \int_0^t  \dd s \int \left(p_{n}^2- T_+\right) f_n(s) \dd\nu_{T_+} \\ & =
      T_+  \int_0^t  \dd s \int p_{n} \partial_{p_n} f_n(s) \dd\nu_{T_+} .
  \end{align*}
Thus, by the Cauchy-Schwarz inequality
\begin{multline}
  \left|\mathbb{E}\bigg[   \int_0^t  \left(p_{n}^2(s) - T_+\right)\dd s\bigg]\right|
 \\  \le T_+ \left( \int_0^t  \dd s \int p_{n}^2 f_n(s) \dd\nu_{T_+} \right)^{1/2}
      \left( \int_0^t  \dd s \int \frac{\left( \partial_{p_n} f_n(s) \right)^2}{f_n(s)} \dd\nu_{T_+} \right)^{1/2}.
    \label{eq:7}
\end{multline}
  By the entropy inequality, see e.g.~
  \cite[p. 338]{KL}, we can write (recall \eqref{Hn})
\begin{multline*}
  \int  \mathcal{H}_n (\mathbf r, \mathbf p)  f_n(s, \mathbf r, \mathbf p) \nu_{T_+} (\dd\mathbf r, \dd\mathbf p) \\ \le \frac{1}{\al}\left\{\log\left[\int
     \exp\left\{\al\mathcal{H}_n (\mathbf r, \mathbf p) \right\}\nu_{T_+} (\dd\mathbf r, \dd\mathbf p)\right]+H_n(s)\right\}
  \end{multline*}
for any $\al>0$.
 By \eqref{eq:5}, for any $t>0$ and a sufficiently small $\al>0$, there exists $C>0$
 such that
  \begin{equation}
\label{p-n}
  \sup_{s\in[0,t]}  \int  \mathcal{H}_n (\mathbf r, \mathbf p)  f_n(s, \mathbf r, \mathbf p) \nu_{T_+} (\dd\mathbf r, \dd\mathbf p) \le  Cn,\quad n\ge 1.
  \end{equation}
  Consequently, by \eqref{eq:88t} and \eqref{eq:7}, there exists $C>0$
  such that 
  $$
\sup_{s\in[0,t]} \left|\mathbb{E}\left[  \int_0^t  \left(p_{n}^2(s) - T_+\right)\dd
   s\right]\right| \le C,\quad n\ge1.
$$
 Hence, in particular we obtain
  \begin{equation}\label{eq:p2b}
  \int_0^t \dd s\ \int p_{n}^2 f_n(s) \dd\nu_{T_+} \le C.
  \end{equation}
  Using this estimate in  \eqref{eq:7} together with  \eqref{eq:88t}
  we
  conclude that for any $t\ge0$ there exists $C>0$ for which
  \begin{equation}
  \left|\mathbb{E}\bigg[   \int_0^t  \left(p_{n}^2(s) - T_+\right)\dd s\bigg]\right|
  \le \frac{C}{\sqrt n},\quad n\ge1.
    \label{eq:7tk}
\end{equation}
 Hence the first equality of \eqref{eq:ex-1} follows.
 The proof of the second equality of  \eqref{eq:ex-1} is similar.

\medskip

Concerning \eqref{eq:ex-2}: from the energy
 conservation it follows that
 \begin{equation*}
   n^{-2} L \mathcal E_n = j_{n-1,n}+\frac{\tilde\gamma}{2}  (T_+ - p_n^2) + \bar\tau_+(t) p_n.
 \end{equation*}
To deal with the  term $\bar\tau_+(t) p_n$, note that
\begin{align}
    \left|\int_0^t \bar\tau_+ (s)  \mathbb{E} \big[p_n(s)\big] \dd
   s\right| & \le\|\bar\tau_+\|_\infty \int_0^t \left|\int p_{n} f_n(s)
   \dd\nu_{T_+} \right|\dd s \notag\\
& =\frac{\|\bar\tau_+\|_\infty}{T_+} \int_0^t\left|\int \partial_{p_{n}} f_n(s) \dd\nu_{T_+} \right|\dd s
 \notag\\
& \le \frac{\|\bar\tau_+\|_\infty}{T_+} \int_0^t \left(\int  f_n(s)
   d\nu_{T_+} \right)^{1/2}  \left( \int \frac{\left( \partial_{p_n}
       f_n(s) \right)^2}{f_n(s)} \dd\nu_{T_+} \right)^{1/2}\dd s \notag\\
      & \le  \frac{\|\bar\tau_+\|_\infty}{T_+}
    \frac{C}{  \sqrt{n}} \mathop{\longrightarrow}_{n\to\infty} 0,  \label{eq:1a}
    \end{align}
by virtue of \eqref{eq:88t}.

 The first equality of \eqref{eq:ex-2} is then a direct consequence of
 \eqref{eq:ex-1}. The same argument works for $j_{0,1}$. 
\end{proof}

Using the energy conservation it follows immediately:
\begin{corollary}
{The currents, defined in \eqref{eq:current} and
  \eqref{eq:current-bound},  satisfy}
  \begin{equation}
    \label{eq:11}
      \lim_{n\to\infty}    \mathbb{E}\bigg[   \int_0^t  j_{x,x+1}(s)
      \dd s\bigg]  = 0, \quad x=-1,\dots,n,\quad t\ge0.
  \end{equation}
\end{corollary}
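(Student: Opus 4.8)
The plan is to deduce the claim from the microscopic conservation law \eqref{eq:en-evol} together with the two boundary identities already established in Lemma \ref{pbond}, by an elementary telescoping argument. Fix $x\in\{0,\dots,n\}$ (for $x=-1$, and in fact also for $x=0$, the statement is already part of \eqref{eq:ex-2}). Summing \eqref{eq:en-evol} over $y=0,\dots,x$ makes the right-hand side telescope, and one obtains the identity
\[
j_{x,x+1} \;=\; j_{-1,0}\;-\;\sum_{y=0}^{x} n^{-2} L\mathcal E_y .
\]
Integrating in time and taking expectations, it therefore suffices to control $\mathbb E\big[\int_0^t j_{-1,0}(s)\,\dd s\big]$, which vanishes as $n\to\infty$ by \eqref{eq:ex-2}, and the cumulative term $\sum_{y=0}^{x} n^{-2}\mathbb E\big[\int_0^t (L\mathcal E_y)(s)\,\dd s\big]$.

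For the cumulative term I would invoke Dynkin's formula: since the $\Om_n$-valued process has generator $L$, and since the a priori bound \eqref{p-n} ensures $\sup_{s\le t}\mathbb E[\mathcal H_n(s)]\le Cn$ — hence in particular that all the second moments involved are finite and the stochastic integrals appearing in the evolution of $\mathcal E_y$ are genuine mean-zero martingales — one has
\[
n^{-2}\,\mathbb E\bigg[\int_0^t (L\mathcal E_y)(s)\,\dd s\bigg] \;=\; \frac{1}{n^2}\Big(\mathbb E[\mathcal E_y(t)]-\mathbb E[\mathcal E_y(0)]\Big).
\]
Summing over $y=0,\dots,x$ and using $0\le \sum_{y=0}^{x}\mathcal E_y\le \mathcal H_n$ (recall \eqref{Hn}) together with \eqref{p-n}, the cumulative term is bounded in absolute value by $2Cn/n^{2}=O(1/n)$, and so tends to $0$. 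Combining the two pieces gives \eqref{eq:11} for every $x\in\{-1,\dots,n\}$ and every $t\ge0$; note that the same computation with $x=n$, where $\sum_{y=0}^{n}\mathcal E_y=\mathcal H_n$, recovers the vanishing of the time-integrated right boundary current as well.

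The argument has essentially no obstacle: the only point deserving a word of care is the justification of the Dynkin step, which rests on the finiteness of the relevant moments supplied by \eqref{eq:5}--\eqref{p-n} (and, if one wishes to be fully rigorous in the presence of unbounded coefficients, a standard localization). Conceptually the mechanism is transparent: the total energy change over $[0,t]$ is only of order $n$, while the diffusive rescaling inserts the factor $n^{2}$ in the generator, so the time-integrated bulk current — a cumulative energy flux divided by $n^{2}$ — is necessarily $o(1)$.
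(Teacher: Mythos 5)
Your argument is correct and is exactly the paper's (the paper dismisses the corollary with ``using the energy conservation it follows immediately''): telescope \eqref{eq:en-evol} from a boundary, convert the time-integrated generator terms into differences of expectations, bound the accumulated sum by $O(n)/n^2$ via \eqref{p-n}, and anchor with the vanishing boundary current from Lemma \ref{pbond}. One trivial citation slip: the vanishing of $\mathbb E[\int_0^t j_{-1,0}(s)\,\dd s]$ comes from \eqref{eq:ex-1} together with the formula \eqref{eq:current-bound} (or you could anchor at $j_{0,1}$, which is what \eqref{eq:ex-2} actually covers), not from \eqref{eq:ex-2} itself.
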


Concerning the potential energy at the boundary
  points we have the following bound.
\begin{lemma}
  \label{lem-rb}
  There exists a  constant $C<\infty$ such that
  \begin{equation}
    \label{eq:13}
    \mathbb{E}\left[ \int_0^t \left(r_1^2(s) + r_n^2(s)\right) \;
      \dd s\right] \le C,\quad{\color{blue} n\ge1.}
  \end{equation}
\end{lemma}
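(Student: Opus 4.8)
The plan is to mimic the stationary bound \eqref{eq:23}, replacing stationary averages by time integrals $\mathbb{E}\big[\int_0^t\,\cdot\,\dd s\big]$ and using the forward (Dynkin) relation $\mathbb{E}[F(t)]-\mathbb{E}[F(0)]=\mathbb{E}\big[\int_0^t LF(s)\,\dd s\big]$ together with the entropy and Dirichlet-form bounds \eqref{eq:5}, \eqref{p-n}, \eqref{eq:88t} and their consequences collected in Lemma~\ref{pbond} and in \eqref{eq:11}. For each fixed $n$ all the quadratic moments involved are finite, uniformly on $[0,t]$, by \eqref{p-n}, so the manipulations below are legitimate; the justification of the Dynkin formula for the non-compactly supported observables $p_0r_1$ and $p_nr_n$ is handled by the same approximation/variational argument referenced after Proposition~\ref{prop:entropy}.

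First I would treat $r_1$. Applying the forward relation to $F=p_0r_1$ and using identity \eqref{eq:a} yields
\[
\mathbb{E}\Big[\int_0^t r_1^2(s)\,\dd s\Big]=\frac{1}{n^2}\big(\mathbb{E}[p_0r_1(t)]-\mathbb{E}[p_0r_1(0)]\big)-\mathbb{E}\Big[\int_0^t (p_1-p_0)p_0(s)\,\dd s\Big]+\tfrac12(\tilde\gamma+\gamma)\,\mathbb{E}\Big[\int_0^t p_0r_1(s)\,\dd s\Big].
\]
The time-boundary term is $O(1/n)$, since $|p_0r_1|\le\tfrac12 p_0^2+\tfrac12 r_1^2\le 2\mathcal H_n$ and \eqref{p-n} gives $n^{-2}\mathbb E[|p_0r_1(s)|]\le 2C/n$. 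For the remaining terms I would use the fluctuation–dissipation structure exactly as in the proof of \eqref{eq:23}: write $(p_1-p_0)p_0=p_1p_0-p_0^2$ and, from \eqref{eq:current}, $p_0r_1=-j_{0,1}+\tfrac{\gamma}{2}(p_0^2-p_1^2)$. Collecting, the coefficient of $\int_0^t p_1^2$ is $-\tfrac{\gamma}{4}(\tilde\gamma+\gamma)$, and Young's inequality applied to $p_1p_0$ with the critical weight $A=\tfrac{\gamma}{2}(\gamma+\tilde\gamma)$ cancels it, leaving
\[
\mathbb{E}\Big[\int_0^t r_1^2(s)\,\dd s\Big]\le o(1)+C\,\mathbb{E}\Big[\int_0^t p_0^2(s)\,\dd s\Big]+\tfrac12(\tilde\gamma+\gamma)\Big|\mathbb{E}\Big[\int_0^t j_{0,1}(s)\,\dd s\Big]\Big|.
\]
The term $\mathbb E\big[\int_0^t p_0^2(s)\,\dd s\big]$ is bounded uniformly in $n$ by the $p_0$–version of \eqref{eq:p2b} (Lemma~\ref{pbond}), and $\mathbb E\big[\int_0^t j_{0,1}(s)\,\dd s\big]\to0$ by \eqref{eq:11}, hence is bounded; since the left-hand side is finite for each $n$, this gives $\mathbb E\big[\int_0^t r_1^2(s)\,\dd s\big]\le C$ for all $n\ge1$.

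For $r_n$ the argument is identical, starting from the (time-dependent) analogue of \eqref{eq:b} applied to $F=p_nr_n$. Two extra terms appear: the factor $\mathbb E\big[\int_0^t\bar\tau_+(s)r_n(s)\,\dd s\big]$, which by Young's inequality is bounded by $\tfrac12\varepsilon\,\mathbb E\big[\int_0^t r_n^2\,\dd s\big]+\tfrac{\|\bar\tau_+\|_\infty^2 t}{2\varepsilon}$ and absorbed into the left-hand side for $\varepsilon$ small; and the replacement of $p_nr_n$ by $p_{n-1}r_n$ up to an $O(1/n)$ error, which follows from \eqref{013108-19} and \eqref{p-n} (the non-stationary form of \eqref{eq:equ}). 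Writing then $p_{n-1}r_n=-j_{n-1,n}+\tfrac{\gamma}{2}(p_{n-1}^2-p_n^2)$ and running the same Young cancellation of the $\int_0^t p_{n-1}^2$ term, one is left with a bound in terms of $\mathbb E\big[\int_0^t p_n^2\,\dd s\big]$ (bounded by \eqref{eq:p2b}) and $\mathbb E\big[\int_0^t j_{n-1,n}\,\dd s\big]\to0$ (by \eqref{eq:11}). This gives $\mathbb E\big[\int_0^t r_n^2(s)\,\dd s\big]\le C$, and \eqref{eq:13} follows.

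The main obstacle is that there is no direct a priori control of $\mathbb E\big[\int_0^t p_1^2\,\dd s\big]$ (resp. $\mathbb E\big[\int_0^t p_{n-1}^2\,\dd s\big]$): only the boundary momenta $p_0,p_n$ are controlled, through the thermostats (Lemma~\ref{pbond}). Hence $r_1^2$ cannot be bounded by brute force — the argument must exploit the explicit fluctuation–dissipation identity so that the interior-momentum contribution is removed by the sharp choice of Young weight, precisely as in the proof of Proposition~\ref{prop:second-bnd}; the rest is bookkeeping of the entropy-production estimates.
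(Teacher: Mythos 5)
Your proposal is correct and follows essentially the same route as the paper: apply the Dynkin/forward relation to $p_0r_1$ and $p_nr_n$ via \eqref{eq:a}--\eqref{eq:b}, control the time-boundary terms by \eqref{p-n}, and then rerun the stationary second-moment manipulations (current identity plus the critical Young weight) with the boundary inputs supplied by Lemma \ref{pbond}, exactly as the paper indicates by ``repeat the same arguments as for \eqref{eq:rn-2}''. The only deviation is cosmetic: at the right boundary you eliminate $p_{n-1}^2$ through the identity $p_{n-1}r_n=-j_{n-1,n}+\tfrac{\gamma}{2}(p_{n-1}^2-p_n^2)$ together with \eqref{eq:ex-2} and absorb the $\bar\tau_+ r_n$ term by Young, instead of the paper's substitution based on \eqref{eq:p2-p2}/\eqref{eq:pn-1}, which is a harmless variant.
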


\begin{proof} Using \eqref{eq:b} we get
\begin{align}
&     n^{-2} \bbE\big[p_n(t) r_n(t)-p_n(0) r_n(0)\big]\label{eq:b111} \\
&= \int_0^t \bbE\left[p_n(s)(p_n(s) - p_{n-1}(s)) + (\bar\tau_+(s)-r_n(s)) r_n(s) - \frac12(\tilde\gamma +\gamma)
    p_n(s) r_n(s) \right]\dd s. \notag
  \end{align}
The term in the left hand side vanish, as $n\to+\infty$, due to
estimate \eqref{p-n}.
 We can repeat then
  the same arguments as we have used to obtain 
  \eqref{eq:rn-2} and conclude that there exists $C>0$ such that
  \begin{equation}\label{eq:rn-21}
  \mathbb{E}\left[ \int_0^t r_n^2(s)\dd s\right]  \le   C
  \left\{\mathbb{E}\left[ \int_0^t p_n^2(s)\dd s\right] +1\right\},\quad
  n\ge 1.
\end{equation} 
Estimate \eqref{eq:p2b} can be used to obtain the desired bound for $\mathbb{E}\left[ \int_0^t r_n^2(s) \;
      \dd s\right] $. An analogous  estimate on $\mathbb{E}\left[ \int_0^t r_1^2(s) \;
      \dd s\right] $ follows from the same argument, using \eqref{eq:a}
    and the second equality in \eqref{eq:ex-1} instead.
\end{proof}

\begin{lemma}\label{lem:bound2}
  The following convergences hold: at the left boundary point:
  \begin{align}
     \mathbb{E}\left[ \int_0^t p_{0}(s) r_1(s) \dd s\right] & \xrightarrow[n\to\infty]{}
                                                                                  0 \label{ex-2-l}\\
    \mathbb{E}\left[ \int_0^t  \left(p_{1}^2(s) - p_{0}^2(s)\right)\dd s\right] & \xrightarrow[n\to\infty]{}
                                                                                  0 \label{ex-2}\\
 \mathbb{E}\left[  \int_0^t r_{1}(s) r_{2}(s)\; \dd s\right]  &\xrightarrow[n\to\infty]{}  0 \label{ex-2ll}
\end{align}
and at the right boundary point:
\begin{align}
   \mathbb{E}\left[ \int_0^t p_{n-1}(s) r_n(s) \dd s\right] & \xrightarrow[n\to\infty]{}
                                                                                  0 \label{ex-2-r}\\
 \mathbb{E}\left[ \int_0^t  \left(p_{n-1}^2(s) - p_{n}^2(s)\right)\dd s\right] &\xrightarrow[n\to\infty]{}  0 ,\label{ex-2n}\\
  \mathbb{E}\left[  \int_0^t  \left( r_{n-1}(s) r_{n}(s) - \bar\tau_+^2(s)\right)\dd s\right] &\xrightarrow[n\to\infty]{} 0 .
  \label{ex-2r}
\end{align}
\end{lemma}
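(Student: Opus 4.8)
The plan is to follow, line for line, the stationary argument of Corollary \ref{prop:limit-stationary}, replacing every occurrence of $\langle\,\cdot\,\rangle_{\rm ss}$ by $\int_0^t\mathbb{E}[\,\cdot\,]\,\dd s$ and every carré-du-champ identity $n^{-2}LF=\dots$ (such as \eqref{eq:a}, \eqref{eq:b}, \eqref{eq:p1}, \eqref{eq:p2-p2}, \eqref{eq:r1p1}, \eqref{eq:rnpn}, \eqref{013108-19}) by its time-integrated average $n^{-2}\mathbb{E}[F(t)-F(0)]=\int_0^t\mathbb{E}[n^{-2}LF(s)]\,\dd s$. Whenever $|F|\lesssim\mathcal H_n$ — which holds for all the $F$'s above — the ``time-boundary'' term $n^{-2}\mathbb{E}[F(t)-F(0)]$ is $O(1/n)$ by the entropy bound \eqref{p-n}, hence disappears in the limit; the remaining terms are controlled by Lemma \ref{pbond} (which gives $\int_0^t\mathbb{E}[(p_0^2-T_-)]\,\dd s\to0$, $\int_0^t\mathbb{E}[(p_n^2-T_+)]\,\dd s\to0$ and \eqref{eq:11}), by the a priori bound \eqref{eq:13}, and by the Dirichlet-form bound \eqref{eq:88t}. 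I would first record the elementary consequences $\int_0^t\mathbb{E}[p_0^2]\,\dd s\to tT_-$ and $\int_0^t\mathbb{E}[p_n^2]\,\dd s\to tT_+$.

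\emph{Step 1: mixed covariances at the boundary.} This is the non-stationary analogue of Proposition \ref{prop:rnpn}. For \eqref{ex-2-l} I integrate by parts in $p_0$ through $h=g_{T_-}/g_{T_+}$, exactly as in the proof of Proposition \ref{prop:rnpn}, to get $\mathbb{E}[p_0r_1]=T_-\,\llangle r_1\,\partial_{p_0}(f_n/h)\rrangle_{T_-}$; Cauchy--Schwarz in phase space and then in time, against \eqref{eq:88t} and \eqref{eq:13}, yields $\big|\int_0^t\mathbb{E}[p_0r_1]\,\dd s\big|\le C/\sqrt n$. For \eqref{ex-2-r} I first average \eqref{013108-19} at $x=n$ to obtain $\int_0^t\mathbb{E}[p_nr_n]\,\dd s=\int_0^t\mathbb{E}[p_{n-1}r_n]\,\dd s+O(1/n)$ (the time-boundary term being $O(1/n)$ since $|r_n^2|\le2\mathcal H_n$), and then estimate $\int_0^t\mathbb{E}[p_nr_n]\,\dd s\to0$ by integrating by parts in $p_n$ and using \eqref{eq:88t}, \eqref{eq:13}. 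The very same integration by parts in $p_0$, resp.\ in $p_n$, will later give $\int_0^t\mathbb{E}[p_0p_1]\,\dd s\to0$ and $\int_0^t\mathbb{E}[p_np_{n-1}]\,\dd s\to0$, once I dispose of the bounds $\int_0^t\mathbb{E}[p_1^2]\,\dd s\le C$ and $\int_0^t\mathbb{E}[p_{n-1}^2]\,\dd s\le C$ of Step 2.

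\emph{Step 2: square terms, then the stretch correlations.} Averaging \eqref{eq:p1} over $[0,t]$ gives $\gamma\int_0^t\mathbb{E}[p_1^2-p_0^2]\,\dd s=O(1/n)-2\int_0^t\mathbb{E}[r_1p_0]\,\dd s-\tilde\gamma\int_0^t\mathbb{E}[T_--p_0^2]\,\dd s\to0$ by Step 1 and Lemma \ref{pbond}; this proves \eqref{ex-2}, and combined with $\int_0^t\mathbb{E}[p_0^2]\,\dd s\to tT_-$ it also yields $\int_0^t\mathbb{E}[p_1^2]\,\dd s\to tT_-$. Averaging \eqref{eq:p2-p2} gives identically \eqref{ex-2n} and $\int_0^t\mathbb{E}[p_{n-1}^2]\,\dd s\to tT_+$, the only extra term $\int_0^t\bar\tau_+(s)\mathbb{E}[p_n(s)]\,\dd s$ vanishing by the Cauchy--Schwarz argument already carried out in \eqref{eq:1a}. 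Next, for \eqref{ex-2ll} I average \eqref{eq:r1p1}: the time-boundary term is $O(1/n)$, $\int_0^t\mathbb{E}[(p_1-p_0)p_1]\,\dd s\to tT_-$ (by the previous line together with $\int_0^t\mathbb{E}[p_0p_1]\,\dd s\to0$), and $\int_0^t\mathbb{E}[r_1p_1]\,\dd s\to0$ (from \eqref{ex-2-l} via \eqref{013108-19} at $x=1$); hence $\int_0^t\mathbb{E}[(r_2-r_1)r_1]\,\dd s\to-tT_-$, and since averaging \eqref{eq:a} gives $\int_0^t\mathbb{E}[r_1^2]\,\dd s\to tT_-$, \eqref{ex-2ll} follows. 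The right-end statement \eqref{ex-2r} is obtained in the same way from \eqref{eq:rnpn} and \eqref{eq:b}, the only genuinely new ingredient being the relation obtained by averaging $n^{-2}Lp_n$, namely $\mathbb{E}[r_n(s)]=\bar\tau_+(s)-\tfrac12(\gamma+\tilde\gamma)\mathbb{E}[p_n(s)]-n^{-2}\tfrac{\dd}{\dd s}\mathbb{E}[p_n(s)]$: multiplying by $\bar\tau_+(s)$, integrating on $[0,t]$, using $\int_0^t|\mathbb{E}[p_n(s)]|\,\dd s\to0$ (the bound of \eqref{eq:1a}) and integrating the last term by parts in time (where $\bar\tau_+\in C^1$ and $n^{-2}\mathbb{E}[p_n^2(s)]\le C/n$ by \eqref{p-n} are used), I obtain $\int_0^t\bar\tau_+(s)\mathbb{E}[r_n(s)]\,\dd s\to\int_0^t\bar\tau_+^2(s)\,\dd s$; plugging this into the averaged \eqref{eq:b} gives $\int_0^t\mathbb{E}[r_n^2]\,\dd s\to tT_++\int_0^t\bar\tau_+^2(s)\,\dd s$, and the averaged \eqref{eq:rnpn} then yields \eqref{ex-2r}.

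I expect the main obstacle to be organisational rather than analytical. Unlike in the stationary situation, the ``amplitude'' factors $\int_0^t\mathbb{E}[p_1^2]\,\dd s$ and $\int_0^t\mathbb{E}[p_{n-1}^2]\,\dd s$ needed in the Cauchy--Schwarz pairings against the Dirichlet-form bound \eqref{eq:88t} do not come from one algebraic identity, so the estimates have to be closed in the right order — the mixed covariances first, then $\mathbb{E}[p_1^2],\mathbb{E}[p_{n-1}^2]$, then the nearest-neighbour stretch correlations — and at the right endpoint one must in addition absorb the time-dependent forcing, which is exactly what the first-moment equation for $p_n$ above achieves. Note that, in contrast to Section \ref{sec:proofprop}, no test function is present, so no density argument is required here.
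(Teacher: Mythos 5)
Your proposal is correct and follows essentially the same route as the paper's proof: time-integrated generator identities whose time-boundary terms are $O(1/n)$ by \eqref{p-n}, integration by parts in $p_0$ and $p_n$ against the Dirichlet-form bound \eqref{eq:88t} combined with \eqref{eq:13} for the mixed covariances, Lemma \ref{pbond} and \eqref{eq:1a} for the square and forcing terms, and the same time integration by parts as in \eqref{eq:bt1} to identify $\int_0^t\bar\tau_+(s)\,\mathbb{E}[r_n(s)]\,\dd s$. The only (harmless) deviation is that you treat $\int_0^t\mathbb{E}[p_0p_1]\,\dd s$ and $\int_0^t\mathbb{E}[p_{n-1}p_n]\,\dd s$ by a direct integration by parts as in Proposition \ref{prop:rnpn} rather than via the splitting \eqref{eq:9}, and your ordering of the steps (mixed covariances, then \eqref{ex-2}, \eqref{ex-2n} and the second-moment bounds, then the stretch correlations) matches the paper's.
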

\begin{proof}
Since
  $ n^{-2}Lr_n^2 = 2 r_n p_n - 2r_n p_{n-1}$,
 using
  \eqref{p-n} we conclude that \eqref{ex-2-r} holds, provided
  that we can prove
\begin{align}
   \mathbb{E}\left[ \int_0^t r_n(s) p_{n}(s) \dd s\right] & \xrightarrow[n\to\infty]{}
                                                                                  0. \label{ex-2-r1}
\end{align}
 The latter is a consequence of the following estimate, cf. \eqref{eq:88t}, 
  \begin{align*}
    \left|\mathbb{E}\left[ \int_0^t r_n(s) p_n(s) \dd s \right]\right|
      & = \left|\int_0^t \dd s \int r_n \partial_{p_n} f_n(s)
        \dd\nu_{T_+}\right| \\
 & \le  \left(\int_0^t \dd s \int r_n^2 f_n(s) \dd\nu_{T_+} \right)^{1/2}  \left( \int_0^t  \dd s \int \frac{\left( \partial_{p_n} f_n(s) \right)^2}{f_n(s)} \dd\nu_{T_+} \right)^{1/2}\\
     & \le  \left(\int_0^t \dd s \int r_n^2 f_n(s) \dd\nu_{T_+} \right)^{1/2} 
     \frac{C}{ \sqrt{n}} \xrightarrow[n\to\infty]{} 0.
  \end{align*}
  The proof of \eqref{ex-2-l} is similar.
\medskip

  To show \eqref{ex-2n}
note that (see \eqref{eq:p2-p2}) 
\begin{align*}
\gamma \int_0^t  \bbE\left[p_{n-1}^2 (s)- p_n^2(s)\right] \dd s =  &  \; 2
  \int_0^t \bar\tau_+ (s) \bbE \big[ p_n (s)\big]  \dd s - 2
  \int_0^t  \bbE\big[ r_n(s) p_n(s) \big] \dd s\\
&
+  \tilde\gamma \int_0^t  \bbE\left[T_+-p_n^2(s) \right] \dd s -\frac{1}{n^{2}}\bbE\left[p_n^2(t)-p_n^2(0)\right] . 
\end{align*}
 The second, third and fourth terms in the right hand side vanish due
 to 
  \eqref{ex-2-r1}, Lemma \ref{pbond} and \eqref{p-n}, respectively.
The first term has been already treated in \eqref{eq:1a}. An analogous argument, starting from
\eqref{eq:p1} allows us to prove \eqref{ex-2}.

\medskip

  Besides, we have
  \begin{multline}
    \label{eq:9}
     \mathbb{E}\left[ \int_0^t p_{n-1}(s) p_n(s)
       \dd s\right] \\ =\mathbb{E}\left[ \int_0^t p_{n-1}(s) (p_n(s)-T_+)
       \dd s\right]+T_+\mathbb{E}\left[ \int_0^t p_{n-1}(s) \dd s\right]
      \xrightarrow[n\to\infty]{} 0.
  \end{multline}
The above convergence is proved as follows: the first term in the right hand side can be estimated by the
Cauchy-Schwarz inequality. Then we can use the  bound 
$$ 
\mathbb{E}\left[ \int_0^t p_{n-1}^2(s) \dd s\right] \le C, \quad n\ge1
$$ (it follows from the already proved \eqref{ex-2n}) and Lemma
\ref{pbond} to prove that it vanishes, as $n\to+\infty$.
To show that the second term vanishes we can use estimates analogous
to \eqref{eq:1a}. The argument for \eqref{ex-2-l} follows essentially
the same lines.

  By \eqref{eq:b} we can now write
  \begin{align}\label{eq:bt}
   \mathbb{E}\left[  \int_0^t  \left(r_{n}^2(s) - T_+ -
     \bar\tau_+^2(s)\right)\dd s\right] = & \; \int_0^t\bar\tau_+ (s)
     \mathbb{E} \big[r_n(s) -\bar\tau_+ (s)\big]\dd s 
     \notag \\ &  +   \int_0^t\mathbb{E}\big[  p_n^2 (s)-
     T_+  \big] \dd s 
- \int_0^t\mathbb{E}\big[  p_n (s)p_{n-1}(s)
     \big] \dd s \notag \\ & - \frac12 (\tilde\gamma+ \gamma) \int_0^t\mathbb{E}\big[  p_n (s)r_n(s) \big] \dd s \notag \\ & - \frac{1}{n^{2}} \mathbb{E}\big[   p_n(t) r_n(t)-p_n(0) r_n(0)\big].
  \end{align}
 By the previous results, the last four terms in the right hand side
 vanish, as $n\to+\infty$. Concerning the first term we use
\begin{equation}
\label{012308-19}
n^{-2}Lp_n(s)=\bar\tau_+ (s)-r_n(s) -\frac12 (\tilde\gamma+ \gamma)p_n(s)
\end{equation}
to conclude that
\begin{align}\label{eq:bt1}
    \int_0^t\bar\tau_+ (s)
     \mathbb{E} \big[r_n(s) -\bar\tau_+ (s)\big]\dd s=   & \; \frac12
     (\tilde\gamma+ \gamma)\int_0^t\bar\tau_+ (s)\mathbb{E}\big[p_n (s)\big]\dd
     s  \notag \\ & - \frac{1}{n^{2}}\int_0^t\bar\tau_+ (s) \frac{\dd}{\dd s}\mathbb{E}
    \big[ p_n(s)\big]\dd s.
  \end{align}
The first term vanishes, as $n\to+\infty$, by  \eqref{eq:1a}. By
integration by parts  the second term equals
$$
\frac{1}{n^{2}}\int_0^t\bar\tau_+' (s) \mathbb{E}
    \big[ p_n(s)\big]\dd s-\frac{1}{n^{2}}\Big(\bar\tau_+ (t) \mathbb{E}
    \big[ p_n(t)\big]-\bar\tau_+ (0) \mathbb{E}\big[
    p_n(0)\big]\Big)
$$
which also vanishes, thanks to \eqref{eq:1a} and \eqref{p-n}. Therefore,
  \begin{equation*}
     \mathbb{E}\left[  \int_0^t  \left(r_{n}^2(s) - T_+ - \bar\tau_+^2(s)\right)\dd s\right] \longrightarrow 0.
   \end{equation*}
To see  \eqref{ex-2r} 
it suffices to show that
\begin{equation*}
     \mathbb{E}\left[  \int_0^t  \left(r_{n}^2(s) - r_{n}(s) r_{n-1}(s)-T_+\right)\dd s\right] \longrightarrow 0.
   \end{equation*}
For that purpose we invoke \eqref{eq:rnpn}, which permits to write
\begin{align*}
&    \mathbb{E}\left[ \int_0^t   \Big((r_{n-1}(s)-r_{n}(s))r_n(s)+T_+\Big) \dd s\right]
=\mathbb{E}\left[ \int_0^t   (p_n^2(s)-p_{n-1}^2(s))\dd
  s\right] \\
& \qquad
+\mathbb{E}\left[ \int_0^t   p_n(s)p_{n-1}(s) \dd
  s\right]  +\mathbb{E}\left[ \int_0^t   (T_+-p_{n}^2(s))\dd
  s\right]\\
& \qquad
-
\gamma \mathbb{E}\left[ \int_0^t r_n(s)p_{n-1}(s)\dd s\right] -\frac{1}{n^{2}}\bbE \big[r_n (t)p_{n-1}(t)-r_n (0)p_{n-1}(0)\big].
 \end{align*}
Each term in the right hand side of the above equality  vanishes, as
$n\to+\infty$, by virtue of the already proved estimates. 
  With a similar procedure we obtain \eqref{ex-2ll}. 
\end{proof}

\subsection{ Hydrodynamic limit}

\label{secA.2}

Let us now turn to equation \eqref{eq:linear}, which can be formulated in a weak form as:
\begin{multline}
  \label{eq:wlinear}
  \int_0^1 \dd u\; G(u) \big(r(t,u) - r(0,u)\big)  \\ = \frac {1}{{\gamma}} \int_0^t \dd s
  \int_0^1    \dd u \; G''(u) r(s,u)  -  \frac {1}{{\gamma}}  G'(1) \int_0^t \bar\tau_+(s) \dd s
  ,
\end{multline}
for any  test function $G\in C^2_{0,1}(\T)$ -- the class of
$C^2$ functions  on $[0,1]$ such that $G(0) = G(1) =
0$. Existence and uniqueness of such weak solutions in an appropriate
space of integrable functions are standard. 
By the microscopic evolution equations
\eqref{eq:qdynamics} we have  (recall that $G_x=G(x/n)$) 
\begin{align}
  \bbE\left[  \frac 1n \sum_{x=1}^n 
    G_x\left(r_x(t) -  r_x(0)\right) \right] &
    = \bbE\left[ \int_0^t \dd s \ n \sum_{x=1}^n 
   G_x \left(p_x(s) -  p_{x-1}(s)\right) \right]\notag \\
& = \bbE\left[  \int_0^t \dd s \left\{ -\sum_{x=1}^{n-1} 
    (\nabla_n G)_x\;p_x(s) - n
    G_1 p_0(s)\right\}\right],
  \label{eq:timevol1} \end{align}
where $ (\nabla_n G)_x:= n(G_{x+1} - G_x)$. 
Using 
\eqref{eq:pdyn} we can write \eqref{eq:timevol1} as
\begin{align}
  &
 \bbE\left[ -\int_0^t  \dd s  \left\{ \sum_{x=1}^{n-1} \frac{1}{\gamma} (\nabla_n
      G)_x\left(r_{x+1}(s) - r_x(s)\right) +\frac {1}{2(\gamma+ \tilde\gamma)} n
      G_1 r_1(s) \right\}\right]  \label{eq:timevol++}\\
&+\bbE\left[ \frac{1}{\gamma n^2} \sum_{x=1}^{n-1} (\nabla_n
      G)_x\left(p_x(t) - p_x(0)\right) + \frac
      {1}{2(\gamma +\tilde \gamma)n^2}   
    n G_1\left(p_0(t) - p_0(0)\right) \right].
\label{eq:timevol+b} \end{align}
Since $G$ is smooth, $nG_1 \to G'(0)$ and $(\nabla_nG)_x\to G'(x)$, as $n\to +\infty$.
Using this and  \eqref{p-n}, one shows that the expression
\eqref{eq:timevol+b} converges to $0$.
The only significant term  is therefore the first one \eqref{eq:timevol++}. 
Summing by parts, using the notation 
$$
(\Delta_n G)_x:=
n^2(G_{x+1}+G_{x-1}-2G_x)
$$ and recalling  that $G(0)=0$, it can be rewritten as
\begin{multline}\label{eq:timevol3}
\bbE\left[ \int_0^t  \frac{1}{\gamma}\left\{\frac 1n \sum_{x=2}^{n-1}
      (\Delta_n G)_x\; r_x(s) - (\nabla_n G)_{n-1}\; r_n(s) \right\} \dd s\right]\\
      - \left(\frac {1}{2(\gamma+ \tilde\gamma)}  
      (\nabla_n G)_0 -\frac{1}{\gamma}  (\nabla_n G)_1 \right)
   \bbE\left[\int_0^t r_1(s) \dd s\right].    
\end{multline}
It is easy to see,  using \eqref{eq:rbd}, that 
\begin{align*}
\lim_{n\to+\infty}\bbE\left[\int_0^t r_1(s) \dd s\right] & =
 \frac{\ga+\tilde\ga}{2}\lim_{n\to+\infty} \int_0^t  \dd s\int
 p_1f_n(s) \dd\nu_{T_+} \\
&
=
 \frac{\ga+\tilde\ga}{2T_+}\lim_{n\to+\infty} \int_0^t  \dd s\int
 \partial_{p_1}f_n(s) \dd\nu_{T_+} =0,
\end{align*}
by \eqref{eq:88t}. Using \eqref{012308-19} we obtain also
$$
 \lim_{n\to+\infty}\bbE\left[\int_0^t r_n(s) \dd s\right]=\int_0^t
 \bar \tau_+(s) \dd s.
$$
Therefore, we can rewrite \eqref{eq:timevol3} as
\begin{equation}\label{eq:timevol31}
  \begin{split}
\int_0^t \dd s\left\{ \frac{1}{\gamma}\int_0^1G''(u) r^{(n)}(s,u)\dd u- G'(1) \bar\tau_+(s)
\right\}
+o_n(t),
  \end{split}
\end{equation}
 where    
\begin{equation}
\label{brn}
\bar r^{(n)}(t,u)=\bbE \big[r_x(t)\big]\quad\mbox{ for } u\in \big[ \tfrac x n; \tfrac{x+1} n \big),
 \quad n\ge1
\end{equation} and $\lim_{n\to+\infty} \sup_{s\in[0,t]}o_n(s)=0$.
Thanks to \eqref{p-n} we know that
there exists $R>0$ such that
\begin{equation}
\label{010203}
\sup_{n\ge1}\sup_{s\in[0,t]}\big\|\bar r^{(n)}(s,\cdot)\big\|_{L^2(\T)}=:R<+\infty.
\end{equation}
The above means that   for each $s\in[0,t]$ the sequence $\left\{\bar r^{(n)}(s,\cdot)\right\}_{n\ge1}$ is contained in $\bar B_R$ -- the closed ball of radius $R>0$ in $ L^2(\T)$, centered at $0$.
 The ball  is compact in $ L^2_w(\T)$ --
the space of square integrable functions on $\T$ equipped
with the weak $ L^2$ topology.  The  topology restricted to $\bar B_R$ is metrizable.
From the above argument it follows in particular that  for each $t>0$ the sequence  $\left\{\bar r^{(n)}(\cdot)\right\}$ is equicontinuous 
 in ${\cal C}\left([0,t],\bar B_R\right)$. Thus, according to the Arzela theorem, see e.g.~\cite[p. 234]{kelley}, it is
sequentially pre-compact in the space ${\cal C}\left([0,t],L^2_w(\T)\right)$ for any $t>0$. Consequently, any limiting point of the
sequence satisfies  
\eqref{eq:wlinear}.  

\medskip

 Concerning equation \eqref{eq:4} with the respective boundary
  condition, its weak
  formulation is as follows:
for any 
$G\in L^1([0,+\infty);C_{0,1}^2(\T))$ we have
\begin{align}
\label{022308-19}
 0= & \;  
 \int_0^1 G(0,u) e(0,u)\; \dd u \notag \\ &  +
 \int_0^{+\infty} \dd s \int_0^1 \left(\partial_sG(s,u) + \frac12 (\gamma^{-1}+\gamma)
    \partial_u^2G(s,u) \right)e(s,u) \; \dd u \notag \\
  & + \frac {1}{4}(\gamma^{-1} - \gamma) \int_0^{+\infty} \dd s \int_0^1 \partial_u^2 G(s,u)
   r^2(s,u) \; \dd u \notag \\
   & -\int_0^{+\infty} \left(\partial_uG(s,1) \left[(\gamma^{-1}+\gamma) T_+
       + \frac 14 \left(\gamma^{-1} -\gamma\right) \bar\tau_+(s)^2\right]
     - \partial_uG(s,0) T_-\right) \dd s.
  \end{align}
Given a non-negative initial data $e(0,\cdot)\in L^1(\T)$ and
the macroscopic stretch $ r(\cdot)$ (determined via \eqref{eq:wlinear}) one can easily
    show that the respective weak formulation of the boundary value
    problem for a linear heat equation, resulting from \eqref{022308-19},
    admits a unique measured value solution.
 
The  averaged thermal energy  function is defined as 
$$
\bar{ \mathcal E}_n(t,u):=
\bbE \big[\mathcal E_x(t)\big],\quad u\in\big[\tfrac{x}{n},\tfrac{x+1}{n}\big),\quad x=0,\ldots,n-1.
$$
It is easy to see, thanks to \eqref{p-n}, that $\left(\bar{ \mathcal E}_n(t)\right)_{n\ge1}$
is  bounded in the dual to the separable Banach
space  $L^1([0,+\infty);C_{0,1}^2(\T))$.
Thus it is $\star$-weakly compact. In what follows we identify its limit $e(t)$
by showing  that it satisfies \eqref{022308-19}.
To achieve this goal we are going to use the microscopic energy currents given in \eqref{eq:current}. 

 Consider now a smooth test function $G\in C_0^\infty([0,+\infty)\times
\T)$  such that $G(s,0) = G(s,1) \equiv 0$, $s\ge0$.
Then,  from \eqref{eq:en-evol}, we get 
\begin{align}
   &- \frac 1n  {\mathbb E\left[\sum_{x=0}^n  G_x (0) \mathcal E_x(0) \right]} = 
        -       \frac 1n
  {\mathbb E\left[ \sum_{x=1}^{n-1} G_x (0){\mathcal E}_x(0) \right]} 
=- \frac 1n \int_0^t \dd s\; {\mathbb E\left[\sum_{x=0}^{n-1}  \partial_sG_x (s) \mathcal E_x(s) \right]}  \notag \\
    & + \int_0^t \dd s\;  {\mathbb E\left[\sum_{x=1}^{n-2} (\nabla_n G)_x(s)\; j_{x,x+1}(s) - nG_{n-1}(s) j_{n-1,n}(s)
     + nG_1(s) j_{0,1}(s) \right] }.\label{eq:cal0}
             \end{align}
Here $G_x(s):=G(s,x/n)$ and we use a likewise notation for
$\nabla_nG_x(s)$, $\Delta_nG_x(s)$.

Concerning \eqref{eq:cal0}:  by \eqref{eq:ex-2}, the expectation of its last two terms  are negligible.
In order to treat the first term of \eqref{eq:cal0},
we use the fluctuation-dissipation relation  \eqref{eq:fdst2}, i.e.
\begin{equation}
  \label{eq:fd}
  j_{x,x+1} = n^{-2} L g_x + \nabla V_x, 
\end{equation}
 with
  \[
 V_x= -\frac 1{2 \gamma}  r_x^2 - \frac{\gamma}{4}(p_x^2 + p_{x-1}^2) - \frac 1{2 \gamma} p_xp_{x-1} .
\]
Using this relation we can rewrite the last term \eqref{eq:cal0} as
\begin{equation}
  \int_0^t \dd s\; {\mathbb E}\bigg[ \frac 1{n}  \sum_{x=2}^{n-2}
                                                  (\Delta_n G)_x (s)V_x(s) \label{eq:lastb0}
                                                 -  (\nabla_n G)_{n-2} (s)V_{n-1}(s) 
                                                 +  (\nabla_n G)_1(s)  V_1(s)\bigg]  
  \end{equation}
plus expressions involving the average fluctuating term
\begin{align*}
\int_0^t \dd s  \mathbb E \bigg[ \frac 1{n^2} \sum_{x=1}^{n-2} (\nabla_n G)_x(s) & L g_x(s)\bigg] \\
= & \; \mathbb E\left[ \frac 1{n^2} \sum_{x=1}^{n-2}  \Big((\nabla_n
  G)_x(t) g_x(t) -(\nabla_n G)_x(0) g_x(0)\Big)\right]\\
&
-\int_0^t \dd s \mathbb E \left[ \frac 1{n^2} \sum_{x=1}^{n-2} (\nabla_n \partial_sG)_x(s) g_x(s)\right]
\end{align*}
which turns out to be small, as $n\to+\infty$, thanks to the energy bound \eqref{p-n}.
By Lemmas \ref{pbond} and \ref{lem:bound2} we have:
\begin{equation}
  \label{eq:6}
    \lim_{n\to\infty}  {\mathbb E}\left[ \int_0^t \dd s\; (\nabla_n G)_1(s) V_1(s) \right] =
    -  \int_0^t  \dd s \partial_u G(s,0)\frac 12\left(\gamma^{-1}+\gamma\right) T_- , 
\end{equation}
which takes care of the left boundary condition. Concerning the right
one we have
\begin{align}
  \label{eq:6aa} 
   {\mathbb E}\left[ \int_0^t \dd s\;  (\nabla_n G)_{n-2} (s)
     V_{n-1}(s) \right] = & 
- {\mathbb E}\left[ \int_0^t \dd s\;  (\nabla_n G)_{n-2} (s) \nabla V_{n-1}(s)
 \right] \\
& +  {\mathbb E}\left[ \int_0^t \dd s\;  (\nabla_n G)_{n-2} (s)
     V_{n}(s) \right].\label{eq:6ab}
  \end{align}
Using again the results of 
Lemmas \ref{pbond} and \ref{lem:bound2} we conclude that the limit of
the second term  \eqref{eq:6ab} equals
$$
     - \int_0^t  \dd s \partial_u G(s,1) \left(\frac 12\left(\gamma^{-1}+\gamma\right) T_+
     + \frac 1{2\gamma}  \bar\tau_+^2(s)\right).
$$
On the other hand, using \eqref{eq:fd}, the term \eqref{eq:6aa}
equals
\begin{equation}
  \label{eq:6b}
  \begin{split}  
   \frac{1}{n^2} {\mathbb E}\left[ \int_0^t \dd s\;  (\nabla_n
     G)_{n-2} (s) L g_{n-1}(s)
 \right]
&+  \int_0^t \dd s\;  (\nabla_n G)_{n-2} (s)
  {\mathbb E}  \big[j_{n-1,n}(s)\big].
  \end{split}
\end{equation}
From \eqref{eq:11} we conclude that the second term vanishes, with
$n\to+\infty$. By integration by parts the first term equals
\begin{multline}
  \label{eq:6c}
  \frac{1}{n^2} {\mathbb E}\Big[ (\nabla_n
  G)_{n-2} (t) g_{n-1}(t) -(\nabla_n
  G)_{n-2} (0) g_{n-1}(0)
 \Big]  \\  \qquad  -\frac{1}{n^2} {\mathbb E}\left[ \int_0^t \dd s\;  (\nabla_n
     \partial_s G)_{n-2} (s) g_{n-1}(s)
 \right],
\end{multline}
which vanishes, thanks to \eqref{p-n}. Summarizing we have shown that
\begin{multline}
  \label{eq:6d} 
 \lim_{n\to+\infty} {\mathbb E}\left[ \int_0^t \dd s\;  (\nabla_n G)_{n-2} (s)
     V_{n-1}(s) \right] \notag\\  =
   - \int_0^t  \dd s \partial_u G(s,1) \left[\frac 12\left(\gamma^{-1}+\gamma\right) T_+
     + \frac 1{2\gamma} \bar\tau_+^2(s)\right].
  \end{multline}
Now, for the bulk, it follows from \eqref{013108-19} and
  \eqref{023108-19} that
\begin{equation}
\label{033108-19}
n^{-2} L h_x = \nabla W_x-2\ga p_x p_{x-1},\quad x=2,\ldots,n-1,
\end{equation}
with
$$
h_x:=p_xp_{x-1}-\frac{r_x^2}{2},\qquad W_x:=r_{x-1} p_x.
$$
Therefore by
\eqref{eq:88t} and an argument similar to the one used above we conclude that
\begin{equation}
  \label{eq:14}
  \lim_{n\to\infty}   \int_0^t \dd s\; {\mathbb E} \left[\frac 1{n}  \sum_{x=2}^{n-2}
                                                  (\Delta_n G)_x (s) p_x(s)p_{x-1}(s) \right]= 0.
\end{equation}
In the case $\gamma = 1$ we can rewrite
\begin{equation}
  \label{eq:15}
  V_x= - \mathcal E_x  + \frac{1}{4}(p_x^2 - p_{x-1}^2) - \frac 1{2 } p_xp_{x-1} 
\end{equation}
so that it is easy to see that \eqref{eq:lastb0} is equivalent to
\begin{equation}
  \label{eq:16}
 {\color{magenta} -} \int_0^t \dd s\; {\mathbb E} \left[\frac 1{n}  \sum_{x=2}^{n-2}  (\Delta_n G)_x (s)  \mathcal E_x (s) \right]
  + o_n(1),
\end{equation}
closing the energy conservation equation and concluding the proof.

\medskip

Finally, for $\gamma \neq 1$
we expect that the following term to vanish as $n\to +\infty$:
\begin{equation}
\label{conj1}
\int_0^t \dd s \; {\mathbb E} \left[\frac 1n \sum_{x=2}^{n-2} (\Delta_n G)_x \; \Big(p_x^2(s)
    - \big(r_x(s)-\bbE \big[ r_x(s)\big]\big)^2\Big)\right],
    \end{equation}
  as can be guessed by local equilibrium considerations. 
Unfortunately in order to prove the last limit one needs some higher moment bounds that are not available
from relative entropy considerations.
One prospective work could be to proceed in an analogous way as
in the periodic case \cite{kos1},
by studying the evolution of the Wigner distribution of the thermal energy in Fourier coordinates.

\bibliographystyle{amsalpha}

\end{document}